\numberwithin{equation}{section}
\numberwithin{figure}{section}
 \providecommand{\lyxadded}[3]{}
 \renewcommand{\lyxadded}[3]{
   {\protect\cbstart\color{lyxadded}{}#3\protect\cbend}
 }
 \let\footnote=\endnote
\theoremstyle{plain}
\newtheorem{theorem}{\protect\theoremname}
  \theoremstyle{remark}
  \newtheorem{remark}[theorem]{\protect\remarkname}
    \theoremstyle{lemma}
  \newtheorem{lemma}[theorem]{\protect\lemmaname}
  \theoremstyle{definition}
  \newtheorem{definition}[theorem]{\protect\definitionname}
\providecommand{\definitionname}{Definition}
\providecommand{\remarkname}{Remark}
\providecommand{\theoremname}{Theorem}
\providecommand{\lemmaname}{Lemma}
\theoremstyle{plain}
\begin{document}

\title{Correctors justification for a Smoluchowski--Soret--Dufour model posed in perforated domains}

\author{Vo Anh Khoa\thanks{Author for correspondence. Mathematics and Computer Science Division,
Gran Sasso Science Institute, L'Aquila, Italy. (khoa.vo@gssi.infn.it,
vakhoa.hcmus@gmail.com)}, and Adrian Muntean\thanks{Department of Mathematics and Computer Science, Karlstad University,
Sweden. (adrian.muntean@kau.se)}}
\maketitle
\begin{abstract}
We study a coupled thermo-diffusion system that accounts for the dynamics of hot colloids in periodically heterogeneous media. Our model describes the joint evolution of temperature and colloidal concentrations in a saturated porous structure, where the Smoluchowski interactions are responsible for aggregation and fragmentation processes in the presence of 
Soret-Dufour type effects. Additionally, we allow for deposition and depletion on internal micro-surfaces. In this work, we derive corrector estimates quantifying the rate of convergence of the periodic homogenization limit process performed in \cite{KAM14} via two-scale convergence arguments. The major technical difficulties in the proof are linked to the estimates between nonlinear processes of aggregation and deposition and to the convergence arguments of the \emph{a priori} information of the oscillating weak solutions and cell functions in high dimensions. Essentially, we circumvent the arisen  difficulties by a suitable use of the energy method and of fine integral estimates controlling interactions at the level of micro-surfaces.
\end{abstract}

\section{Introduction}

Diffusion and heat conduction, taken separately, are well understood processes at a large variety of space scales. However, as soon as diffusion interplays with the conduction of heat, it appears that the structure of the model equations is not so clear as one would expect, especially if one wants to describe settings away from the somewhat better understood thermodynamic equilibrium, where statistical mechanics is the main investigation tool.

Driven by possible applications in the context of efficient drug-delivery and in the design of intelligent packaging materials,  we wish to understand mathematically the upscaling of  the following basic thermo-diffusion scenario: We look at a population of colloidal particles (monomers) driven by a flux linearly combining Fick and Fourier contributions.  We assume that monomers undergo a Smoluchowski-like dynamics producing populations of $i$-mers that finally meet and travel through a transversal porous membrane.  The microscopic boundaries (at the level of the membrane pores) are  active in the sense that they host adsorption and desorption of clusters of colloidal particles.  

The starting PDE model is formulated 
in \cite{KAM14} by Krehel and his co-authors. Their  thermo-diffusion system is posed in perforated media with uniform periodicity inside the domain. As main outcome, they prove both the global weak  solvability of the model as well as the periodic homogenization limit. As byproduct, they also obtain the precise structure of the effective transport parameters. Now, is the moment to: 
Justify the two-scale asymptotics by proving corrector/error estimates for the homogenization limit for periodic arrangements of membrane pores/microstructures.

In our context, the structure of the corrector estimate for the involved concentrations and temperature fields  we wish to prove is
\[
\left\Vert \theta^{\varepsilon}-\theta_{0}^{\varepsilon}\right\Vert _{L^{2}\left((0,T)\times\Omega^{\varepsilon}\right)}^{2}+\left\Vert u^{\varepsilon}-u_{0}^{\varepsilon}\right\Vert _{L^{2}\left((0,T)\times\Omega^{\varepsilon}\right)}^{2}
\]
\begin{equation}
+\left\Vert \nabla\left(\theta^{\varepsilon}-\theta_{1}^{\varepsilon}\right)\right\Vert _{L^{2}\left(\left(0,T\right)\times\Omega^{\varepsilon}\right)}^{2}+\left\Vert \nabla\left(u^{\varepsilon}-u_{1}^{\varepsilon}\right)\right\Vert _{L^{2}\left(\left(0,T\right)\times\Omega^{\varepsilon}\right)}^{2}+\varepsilon\left\Vert v^{\varepsilon}-v_{0}^{\varepsilon}\right\Vert _{L^{2}\left((0,T)\times\Gamma^{\varepsilon}\right)}^{2}\le C\varepsilon,
\label{eq:pseudores}
\end{equation}
where $C>0$ is a generic constant independent of the choice of the scale parameter $\varepsilon>0$.

To obtain this corrector estimate, our strategy is to use an energy-like method  and macroscopic reconstructions (cf. e.g.\cite{Eck2}, but also \cite{EM17}). This technique basically relies on the choice of  test functions able to capture  in suitable norms the difference between the micro-and macro-concentrations as well as  micro- and macro-temperatures and their transport fluxes. Careful attention needs to be payed to the regularity  of the limit solutions as well as of the  cell functions  involved in the asymptotic procedure; see e.g. \cite{KM16,Ptash}. A similar approach has been followed by Eck et al. (cf. e.g. \cite{Eck1,Eck2}) concerning  the upscaling of  the phase field model in high contrast regimes. Besides handling new nonlinear terms, the novel aspect in our context is the handling of the errors produced in the upscaling due to micro-surfaces. A similar analysis can be carried over the settings in \cite{AP06,RMK12,SPK14,FM14}, e.g.

Besides the energy-like approach used here for a periodic homogenization case, powerful contributions can be obtained using variants of the bulk and boundary unfollding operators: see, for instance, \cite{GRi04,Vernescu07,Ptash,MR16}. Using somewhat more regularity,  high-order corrector estimates can be obtained  for semi-linear elliptic systems via an iteration method that uses explicitly the expected structure of the two-scale asymptotic expansion; compare \cite{KM16,Khoa17}. Settings involving locally-periodic microstructures can be treated as in \cite{Tycho}, e.g., while the random case is in most of the cases out of reach; see \cite{Kozlov79,PY90} for some details in this direction.

Having available corrector estimates like (\ref{eq:pseudores}) allows in principle the construction of convergence proofs as well as \emph{a priori} error estimate for MsFEM applied to problems in perforated media like in \cite{Legoll2014}, for instance.

This paper is structured as follows: Section \ref{sec:2} is devoted to the presentation of  the Smoluchowski-Sorect-Dufour model  posed in a perforated domain. In this section, we also list a couple of  preliminary results  about that the two-scale convergence and compactness arguments and  about the weak solvability of both the microscopic and limit models (recalling from \cite{KAM14}).  Our main result is Theorem \ref{mainthm:1}, as presented in Section \ref{sec:3}. We then introduce the derivation of the difference system resulting from the microscopic problem and the "macroscopic reconstructed" system. On top of that, we prepare in this part  a few helpful integral estimates. The proof of  Theorem \ref{mainthm:1} is provided in Section \ref{sec:4}. We conclude the  paper with the remarks from Section \ref{sec:conclu}.


\section{Setting of the problem}\label{sec:2}
\subsection{The coupled thermo-diffusion model}
\subsubsection{A geometrical interpretation of porous medium}\label{subsec:geometry}

Let $\Omega$ be a bounded open domain in $\mathbb{R}^{d}$ ($d\in\left\{ 2,3\right\} $) 
with $\partial\Omega\in C^{0,1}$. 
Without
loss of generality, we reduce ourselves to consider $\Omega$ as the
parallelepiped $\left(0,a_{1}\right)\times...\times\left(0,a_{d}\right)$
with $a_{i}>0,i\in\left\{ 1,...,d\right\} $. Let $Y$ be the
representative unit cell defined by
\[
Y:=\left\{ \sum_{i=1}^{d}\lambda_{i}\vec{e}_{i}:0<\lambda_{i}<1\right\} ,
\]
where $\vec{e}_{i}$ is the $i$th unit vector in $\mathbb{R}^{d}$.

Let $Y_{0}$ be an open subset of $Y$ with a Lipschitz boundary
$\Gamma = \partial Y_{0}$ which is divided into two disjoint closed parts $\Gamma_{N}$ and $\Gamma_{R}$ with a nonzero $(d-1)$-dimensional measure, i.e. $\Gamma = \Gamma_{N}\cup \Gamma_{R}$ with $\Gamma_{N}\cap\Gamma_{R} = \emptyset$.

Let $Z\in \mathbb{R}^{d}$ be a hypercube. Then for $X\subset Z$ we denote by $X^k$ the shifted subset
\[
X^{k}:=X+\sum_{i=1}^{d}k_{i}\vec{e}_{i},
\]
where $k=\left(k_{1},...,k_{d}\right)\in\mathbb{Z}^{d}$ is a vector
of indices.

Assume that a scale factor $\varepsilon > 0$ is given. The pore skeleton is then defined as
the union of $\varepsilon Y_{0}^{k}$ the $\varepsilon$-homothetic
sets of $Y_{0}^{k}$, i.e.
\[
\Omega_{0}^{\varepsilon}:=\bigcup_{k\in\mathbb{Z}^{d}}\left\{ \varepsilon Y_{0}^{k}:Y_{0}^{k}\subset\Omega\right\} .
\]

Thus, the total pore space we have in mind is $\Omega^{\varepsilon}=\Omega\backslash\Omega_{0}^{\varepsilon}$.

Set $Y_{1}:=Y\backslash\overline{Y_{0}}$. The unit cell $Y$ is made of two
parts including the gas phase $Y_{1}$ and the solid phase $Y_{0}$.
We denote the total pore surface of the skeleton by $\Gamma^{\varepsilon}:=\partial\Omega_{0}^{\varepsilon}$. The pore surface $\Gamma^{\varepsilon}$ consists of two parts satisfying
$\Gamma^{\varepsilon}=\Gamma_{N}^{\varepsilon}\cup\Gamma_{R}^{\varepsilon}$
where $\Gamma_{N}^{\varepsilon}$ and $\Gamma_{R}^{\varepsilon}$
are disjoint closed sets
possessing a nonzero $\left(d-1\right)$-dimensional measure. The
Neumann boundary $\Gamma_{N}^{\varepsilon}$ indicates the insulation
for the heat flow, whilst at $\Gamma_{R}^{\varepsilon}$ we allow for a
flux of mass through a Robin-type condition. The union of
the cell regions $\varepsilon Y_{1}^{k}$ (without the solid grains
$\varepsilon Y_{0}^{k}$) represents the total available space
for thermo-diffusion. 

In Figure \ref{fig:1} and Figure \ref{fig:2}, we show a  admissible 2d domain with microstructures.
We let throughout the paper $\mbox{n}:=\left(n_{1},...,n_{d}\right)$
be the unit outward normal vector on the boundary $\partial\Omega^{\varepsilon}$. The representation of the periodic geometries is inspired from \cite{HJ91,KM16,RMK12} and references cited therein, but other possibilities exist as well. The practical problem usually delimitates the freedom in choosing the precise structure of $Y_0$; see Figure \ref{fig:2} for a couple of options.

\begin{figure}[!h]
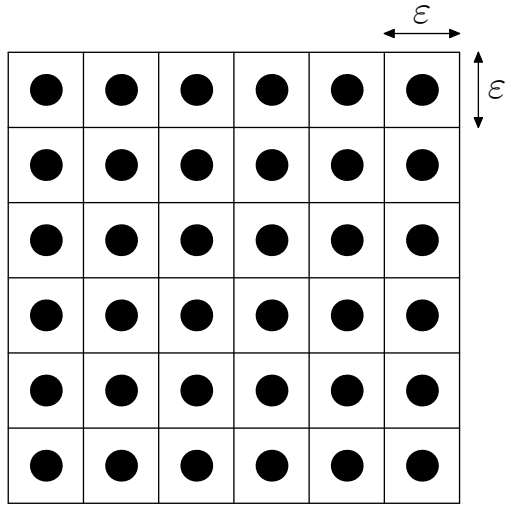

	\centering	
	\parbox{10cm}{\convertMPtoPDF{fig_grid.eps}{1.0}{1.0}}	
	\caption{An admissible 2d perforated domain.}
	\label{fig:1}
\end{figure}
\begin{figure}[!h]
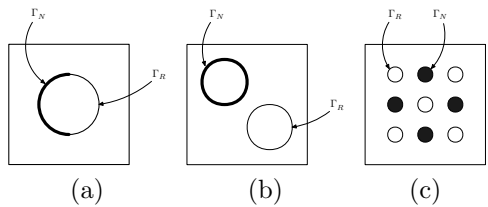

	\centering
	\subfloat[][]{
		\parbox{10cm}{\convertMPtoPDF{what_01.eps}{0.4}{0.4}}	
	}
	\subfloat[][]{
		\parbox{10cm}{\convertMPtoPDF{what_02.eps}{0.4}{0.4}}	
	}
	\subfloat[][]{
		\parbox{10cm}{\convertMPtoPDF{what_03.eps}{0.4}{0.4}}	
	}	
	\caption{Possible choices for $Y_0$. The choice of (a) fits to the geometry described in Figure \ref{fig:1}.}
	\label{fig:2}
\end{figure}


\subsubsection{Model description}

Before describing the microscopic problem (which we refer to as $(P^{\varepsilon})$), we define some useful notation. For $\delta>0$, let $\nabla^{\delta}$ be the so-called mollified gradient
\[
\nabla^{\delta}f\left(x\right):=\nabla\left[\int_{B\left(x,\delta\right)}J_{\delta}\left(x-y\right)f\left(y\right)dy\right],
\]
where $J_{\delta}$ is a mollifier (see e.g. \cite{Evan98}) and $B\left(x,\delta\right)$ is the ball centered in $x\in \Omega$
with radius $\delta$. The radius $\delta$ is assumed to be an $\varepsilon$-independent constant.

We denote by $x\in\Omega^{\varepsilon}$ the macroscopic variable
and by $y=x/\varepsilon$ the microscopic variable representing fast
variations at the microscopic geometry. With this convention, we write
\[
\kappa^{\varepsilon}\left(x\right)=\kappa\left(\frac{x}{\varepsilon}\right)=\kappa\left(y\right).
\]

The same convention applies to all the other oscillating coefficients involved our problem.

We denote by $\mathcal{A}^{\varepsilon}_{\mathbb{T}}$ the second-order elliptic
operator in divergence form with rapidly oscillating coefficients,
i.e.
\begin{equation}
\mathcal{A}_{\mathbb{T}}^{\varepsilon}:=\nabla\cdot\left(-\mathbb{T}\left(\frac{x}{\varepsilon}\right)\nabla\right)=\frac{\partial}{\partial x_{i}}\left[-\tau_{ij}^{\alpha\beta}\left(\frac{x}{\varepsilon}\right)\frac{\partial}{\partial x_{j}}\right].\label{eq:2.8}
\end{equation}

Concerning the structure of $\mathcal{A}^{\varepsilon}_{\mathbb{T}}$, we assume that for all $y\in Y$, $\mathbb{T}\left(y\right)=\left(\tau_{ij}^{\alpha\beta}\left(y\right)\right):\mathbb{R}^{d}\to\mathbb{R}^{m^{2}\times d^{2}}$
for $1\le i,j\le d,1\le\alpha,\beta\le m$ is a second-order tensor that depends
on the position vector $y$ and satisfies a uniform (in $\varepsilon$) ellipticity
condition. Depending on the situation, we have either $\mathbb{T}$ is the tensor $\kappa$  (heat conductivity) or the tensor $d_i$ (diffusion coefficients). Note that $m\ge1$ denotes the number of balance equations in the
system.

In this framework, we consider that maximum $N>2$ colloidal species are involved in the thermo-diffusion process. We denote by $\left(\theta^{\varepsilon},u_{i}^{\varepsilon},v_{i}^{\varepsilon}\right)$
for $i\in\left\{ 1,...,N\right\} $ the triplet of real-valued solutions
of our thermo-diffusion model, i.e. a system of 
coupled ordinary differential equations with semi-linear parabolic equations for the evolution of temperature
and colloid concentrations. Denote by $u^{\varepsilon}:=\left(u_{1}^{\varepsilon},...,u_{N}^{\varepsilon}\right)$  the vector of all active colloidal concentrations $u_{i}^{\varepsilon}$. We assume that these species obey the population balance equation as postulated by Smoluchowski
in \cite{Smo17}, i.e.
\[
R_{i}\left(s\right):=\frac{1}{2}\sum_{k+j=i}\beta_{kj}s_{k}s_{j}-\sum_{j=1}^{N}\beta_{ij}s_{i}s_{j},\quad (\text{with}\;R_{i}:\mathbb{R}^{N}\to\mathbb{R},i\in \left\{1,...,N\right\})
\]
theoretically representing a quadratic-like rate of change of $s_{i}$.
The presence of coagulation coefficients $\beta_{ij}>0$ accounts
for the rate aggregation and fragmentation between populations of particles of size
$i$ and $j$. For further modeling details, we refer the reader to \cite{EGJW98,GKW11,GW11}
and \cite{KMK15}, e.g.

We denote the parabolic cylinders as $Q_{T}^{\varepsilon}:=\left(0,T\right)\times\Omega^{\varepsilon}$ and $Q_T:=(0,T)\times\Omega$. Now, we detail the structure of our microscopic problem ($P^{\varepsilon}$).
For $i\in \left\{1,...,N\right\}$, we consider the following coupled thermo-diffusion system:
\begin{equation}
\partial_{t}\theta^{\varepsilon}+\mathcal{A}_{\kappa}^{\varepsilon}\theta^{\varepsilon}=\tau^{\varepsilon}\sum_{i=1}^{N}\nabla^{\delta}u_{i}^{\varepsilon}\cdot\nabla\theta^{\varepsilon}\quad\mbox{in}\;Q_{T}^{\varepsilon},
\label{eq:eq1}
\end{equation}
\begin{equation}
\partial_{t}u_{i}^{\varepsilon}+\mathcal{A}_{d_{i}}^{\varepsilon}u_{i}^{\varepsilon}=\rho_{i}^{\varepsilon}\nabla^{\delta}\theta^{\varepsilon}\cdot\nabla u_{i}^{\varepsilon}+R_{i}\left(u^{\varepsilon}\right)\quad\mbox{in}\;Q_{T}^{\varepsilon},\label{eq:eq2}
\end{equation}
\begin{equation}
\partial_{t}v_{i}^{\varepsilon}=a_{i}^{\varepsilon}u_{i}^{\varepsilon}-b_{i}^{\varepsilon}v_{i}^{\varepsilon}\quad\mbox{on}\;\left(0,T\right)\times\Gamma^{\varepsilon},\label{eq:odedeposit}
\end{equation}
subject to the boundary conditions
\begin{equation}
-\kappa^{\varepsilon}\nabla\theta^{\varepsilon}\cdot\mbox{n}=0\quad\mbox{on}\;\left(0,T\right)\times\Gamma_{N}^{\varepsilon},\label{eq:boundaryNeu}
\end{equation}
\begin{equation}
-\kappa^{\varepsilon}\nabla\theta^{\varepsilon}\cdot\mbox{n}=\varepsilon g_0^{\varepsilon} \theta^{\varepsilon}\quad\mbox{on}\;\left(0,T\right)\times\Gamma_{R}^{\varepsilon},\label{eq:robinbound}
\end{equation}
\begin{equation}
-\kappa^{\varepsilon}\nabla\theta^{\varepsilon}\cdot\mbox{n}=0\quad\mbox{on}\;\left(0,T\right)\times\partial\Omega,
\end{equation}
\begin{equation}
-d_{i}^{\varepsilon}\nabla u_{i}^{\varepsilon}\cdot\mbox{n}=\varepsilon\left(a_{i}^{\varepsilon}u_{i}^{\varepsilon}-b_{i}^{\varepsilon}v_{i}^{\varepsilon}\right)\quad\mbox{on}\;\left(0,T\right)\times\Gamma^{\varepsilon},\label{eq:deposit}
\end{equation}
\begin{equation}
-d_{i}^{\varepsilon}\nabla u_{i}^{\varepsilon}\cdot\mbox{n}=0\quad\mbox{on}\;\left(0,T\right)\times\partial\Omega,
\end{equation}
and the initial data
\begin{equation}
\theta^{\varepsilon}\left(0,x\right)=\theta^{\varepsilon,0}\left(x\right)\quad\mbox{for}\;x\in\Omega^{\varepsilon}.\label{eq:2.11}
\end{equation}
\begin{equation}
u_{i}^{\varepsilon}\left(0,x\right)=u_{i}^{\varepsilon,0}\left(x\right)\quad\mbox{for}\;x\in\Omega^{\varepsilon},\label{eq:2.12}
\end{equation}
\begin{equation}
v_{i}^{\varepsilon}\left(0,x\right)=v_{i}^{\varepsilon,0}\left(x\right)\quad\mbox{for}\;x\in\Gamma^{\varepsilon}.\label{eq:2.13}
\end{equation}

\eqref{eq:eq1}-\eqref{eq:2.13} form our microscopic problem $(P^{\varepsilon})$.

\begin{table}[h]
	\centering
	\caption{Physical parameters of the microscopic problem ($P^{\varepsilon}$).}
	\begin{tabular}{@{}lll@{}} \hline
		$\kappa^{\varepsilon}$   & heat conductivity (tensor)\\
		$\tau^{\varepsilon}$   & Soret coefficient (tensor)\\
		$g_0^{\varepsilon}$  &  heat absorption (scalar)\\
		$d_{i}^{\varepsilon}$    & diffusion coefficients (tensor)\\
		$\rho_{i}^{\varepsilon}$   & Dufour coefficients (tensor) \\
		$a_i^{\varepsilon},b_i^{\varepsilon}$  & deposition rate coefficients (scalars)\\
		\hline
	\end{tabular}
	\label{tab:1}
\end{table}

\begin{remark}
	Our thermo-diffusion system is made of $N+1$ equations where the short-hand explanation for physical parameters in this model can be found in Table \ref{tab:1}. Physically, equation \eqref{eq:eq1} describes the changes of the temperature  $\theta^{\varepsilon}$
	in $\Omega^{\varepsilon}$ according to a heat conduction equation with a production term depending on $\nabla^{\delta}u_{i}^{\varepsilon}$, whilst the colloidal concentration $u_{i}^{\varepsilon}$
	is assumed to satisfy $N$ reaction-diffusion like equations given
	by \eqref{eq:eq2} with a chemical reaction term depending on $\nabla^{\delta}\theta^{\varepsilon}$. This type of special right-hand sides is mimicking the so-called Soret and Dufour effects. In \eqref{eq:deposit}, $v_{i}^{\varepsilon}$ denotes the mass of
	the deposited species on the boundary of the pore skeleton $\Gamma^{\varepsilon}$.
	These quantities are also supposed to satisfy the following ordinary
	differential equations \eqref{eq:odedeposit}.
\end{remark}


We make use of the following assumptions:

$\left(\mbox{A}_{1}\right)$ The coefficients $\kappa^{\varepsilon},\tau^{\varepsilon},d_{i}^{\varepsilon},\rho_{i}^{\varepsilon}\in [H_{+}^1(\Omega^{\varepsilon})]^{d^2}\cap [L^{\infty}_{+}(\Omega^{\varepsilon})]^{d^2}$, $g_0^{\varepsilon}\in L_{+}^{\infty}(\Gamma_{R}^{\varepsilon})$ and $a_i^{\varepsilon},b_i^{\varepsilon}\in L_{+}^{\infty}\left(\Gamma^{\varepsilon}\right)$ 
are $Y$-periodic. Also, there exist positive constants $\kappa_{\min}$, $\kappa_{\max}$,
$\tau_{\min}$, $\tau_{\max}$, $d_{\min}$, $d_{\max}$, $\rho_{\min}$,
$\rho_{\max},a_{\min},a_{\max},b_{\min},b_{\max}$ such that $\kappa_{\min}\le\kappa_{jk}\le\kappa_{\max},$
$\tau_{\min}\le\tau_{jk}\le\tau_{\max}$, $d_{\min}\le d_{i}^{jk}\le d_{\max}$,
$\rho_{\min}\le\rho_{i}^{jk}\le\rho_{\max}$, $a_{\min}\le a_i^{\varepsilon} \le a_{\max}$, $b_{\min}\le b_i^{\varepsilon} \le b_{\max}$ for $i\in\left\{ 1,...,N\right\} $ and $j,k\in \left\{ 1,...,d\right\}$. Furthermore, there also exist  positive constants $\alpha_{i}$ for $i\in\left\{ 0,...,N\right\} $ such that
\[
\kappa_{jk}\left(y\right)\xi_{j}\xi_{k}\ge\alpha_{0}\left|\xi\right|^{2}\;\text{and}\;d_{i}^{jk}\left(y\right)\xi_{j}\xi_{k}\ge\alpha_{i}\left|\xi\right|^{2}\quad\text{for any}\;\xi\in\mathbb{R}^{d},i\in\left\{ 1,...,N\right\},j\;\text{and}\;k\in\left\{ 1,...,d\right\}
\]
to guarantee the ellipticity of the operators $\mathcal{A}_{\kappa}^{\varepsilon}$ and $\mathcal{A}_{d_i}^{\varepsilon}$.

$\left(\mbox{A}_{2}\right)$ The initial conditions satisfy $\theta^{\varepsilon,0}\in L_{+}^{\infty}\left(\Omega^{\varepsilon}\right)\cap H^{1}\left(\Omega^{\varepsilon}\right)$,
$u_{i}^{\varepsilon,0}\in L_{+}^{\infty}\left(\Omega^{\varepsilon}\right)\cap H^{1}\left(\Omega^{\varepsilon}\right)$,
$v_{i}^{\varepsilon,0}\in L_{+}^{\infty}\left(\Gamma^{\varepsilon}\right)$
for $i\in\left\{ 1,...,N\right\} $, such that we can find $C_{0}>0$
satisfying
\[
\left\Vert \theta^{\varepsilon,0}\right\Vert _{H^{1}\left(\Omega^{\varepsilon}\right)}+\sum_{i=1}^{N}\left(\left\Vert u_{i}^{\varepsilon,0}\right\Vert _{H^{1}\left(\Omega^{\varepsilon}\right)}+\left\Vert v_{i}^{\varepsilon,0}\right\Vert _{L^{\infty}\left(\Gamma^{\varepsilon}\right)}\right)\le C_{0},
\]
where $C_0$ is independent of the choice of $\varepsilon$.

\begin{remark}
	By the definitions of $\kappa,\tau,d_{i},\rho_{i}$ and $\left(\mbox{A}_{1}\right)$, there exist positive constants that bound from below and above these coefficients on $Y$ for each choice of $\varepsilon$.
\end{remark}

Unless otherwise specified, all the constants $C$ are independent
of the homogenization parameter $\varepsilon$, but the precise values may differ from line to line or even within a single chain of estimates. Throughout this paper, we use the superscript $\varepsilon$
to emphasize the dependence on the heterogeneity of the material characterized
by the homogenization parameter $\varepsilon$. In the sequel, we use $dS_{\varepsilon}$ as a shorthand for $\mbox{n}dS_{\varepsilon}$ where $S_{\varepsilon}$ can be viewed as a common notation for a boundary of any surface. Moreover, the notation $\left| \cdot \right|$ for a domain indicates in this work the volume of that domain. 

\subsection{Preliminary results}

In this subsection, we present the definition of two-scale convergence as well as its
compactness arguments (cf. \cite{All92,Ngu89}) together with the fact already known concerning the weak solvability and periodic homogenization of ($P^{\varepsilon}$).

\begin{definition}
	\textbf{Two-scale convergence}
	
	Let $\left(u^{\varepsilon}\right)$ be a sequence of functions in
	$L^{2}\left(0,T;L^{2}\left(\Omega\right)\right)$ with $\Omega$ being
	an open set in $\mathbb{R}^{d}$, then it two-scale converges to a
	unique function $u^{0}\in L^{2}\left(\left(0,T\right)\times\Omega\times Y\right)$,
	denoted by $u^{\varepsilon}\stackrel{2}{\rightharpoonup}u^{0}$, if
	for any $\varphi\in C_{0}^{\infty}\left(\left(0,T\right)\times\Omega;C_{\#}^{\infty}\left(Y\right)\right)$
	we have
	\[
	\lim_{\varepsilon\to0}\int_{0}^{T}\int_{\Omega}u^{\varepsilon}\left(t,x\right)\varphi\left(t,x,\frac{x}{\varepsilon}\right)dxdt=\frac{1}{\left|Y\right|}\int_{0}^{T}\int_{\Omega}\int_{Y}u^{0}\left(t,x,y\right)\varphi\left(t,x,y\right)dydxdt.
	\]
	
\end{definition}

\begin{theorem}
	\emph{\textbf{Two-scale compactness}}
	\begin{itemize}
		\item Let $\left(u^{\varepsilon}\right)$ be a bounded sequence in $L^{2}\left((0,T)\times\Omega\right)$. Then there exists a function $u^{0}\in L^{2}\left(\left(0,T\right)\times\Omega\times Y\right)$ such that, up to a subsequence, $u^{\varepsilon}$ 
		two-scale converges to $u^0$.
		\item Let $\left(u^{\varepsilon}\right)$ be a bounded sequence in $L^{2}\left(0,T;H^{1}\left(\Omega\right)\right)$,
		then up to a subsequence, we have the two-scale convergence in gradient
		$\nabla u^{\varepsilon}\stackrel{2}{\rightharpoonup}\nabla_{x}u^{0}+\nabla_{y}u^{1}$
		for $u^{0}\in L^{2}\left(0,T;H^1(\Omega)\right)$
		and $u^{1}\in L^{2}\left(\left(0,T\right)\times\Omega;H_{\#}^{1}\left(Y\right)/\mathbb{R}\right)$. 
	\end{itemize}
\end{theorem}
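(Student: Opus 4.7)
The plan is to follow the classical Nguetseng--Allaire functional-analytic strategy, adapted in a straightforward way to the parabolic setting $(0,T)\times\Omega$. The technical backbone is the admissibility inequality: for every $\varphi\in C_0^\infty((0,T)\times\Omega;C_\#^\infty(Y))$ the function $\varphi^\varepsilon(t,x):=\varphi(t,x,x/\varepsilon)$ satisfies $\|\varphi^\varepsilon\|_{L^2((0,T)\times\Omega)}\le C\|\varphi\|_{L^2((0,T)\times\Omega;C_\#(Y))}$ with, in addition, $\|\varphi^\varepsilon\|_{L^2}^2\to |Y|^{-1}\|\varphi\|_{L^2((0,T)\times\Omega\times Y)}^2$ as $\varepsilon\to 0$. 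I would establish this by exploiting uniform continuity of $y\mapsto\varphi(\cdot,\cdot,y)$ on $\overline{Y}$ together with a Riemann-sum approximation over the $\varepsilon$-tiles $\varepsilon(k+Y)$, and then upgrade to admissible test functions by a Lusin-type density argument.

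For the first bullet, I would define on the dense subspace $C_0^\infty((0,T)\times\Omega;C_\#^\infty(Y))\subset L^2((0,T)\times\Omega\times Y)$ the continuous linear functional
\[
T_\varepsilon(\varphi):=\int_0^T\!\!\int_\Omega u^\varepsilon(t,x)\,\varphi(t,x,x/\varepsilon)\,dx\,dt,
\]
apply Cauchy--Schwarz together with the admissibility bound and the uniform bound on $u^\varepsilon$ to conclude $|T_\varepsilon(\varphi)|\le C\|\varphi\|_{L^2((0,T)\times\Omega\times Y)}$. By a diagonal/Banach--Alaoglu extraction combined with separability of the test space, one obtains a subsequence along which $T_\varepsilon(\varphi)\to T_0(\varphi)$. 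Density and continuity extend $T_0$ to $L^2((0,T)\times\Omega\times Y)$, and Riesz representation produces the required limit $u^0\in L^2((0,T)\times\Omega\times Y)$.

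For the second bullet, apply the first bullet twice to secure two-scale limits $u^0$ and $\xi^0$ of $u^\varepsilon$ and $\nabla u^\varepsilon$ along a common subsequence. First, test $\nabla u^\varepsilon$ against $\varepsilon\psi(t,x)\phi(x/\varepsilon)$ with $\phi\in C_\#^\infty(Y)^d$; integration by parts in $x$ shows that the left-hand side vanishes in the limit (the prefactor $\varepsilon$ defeats the $L^2$-bound on $\nabla u^\varepsilon$), while the right-hand side converges to $|Y|^{-1}\int\!\!\int\!\!\int u^0\psi\,\mathrm{div}_y\phi\,dy\,dx\,dt$. This forces $\nabla_y u^0=0$, so $u^0=u^0(t,x)$ is $y$-independent. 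Second, testing $\nabla u^\varepsilon$ against an $x$-only test field and comparing with IBP identifies $\nabla_x u^0$ with $|Y|^{-1}\int_Y\xi^0\,dy$, which places $u^0\in L^2(0,T;H^1(\Omega))$. Third, for $\varphi\in C_0^\infty((0,T)\times\Omega;C_\#^\infty(Y))^d$ with $\mathrm{div}_y\varphi=0$, passage to the limit in the identity $\int\nabla u^\varepsilon\cdot\varphi^\varepsilon=-\int u^\varepsilon\,\mathrm{div}_x\varphi^\varepsilon$ yields $\int\!\!\int\!\!\int(\xi^0-\nabla_x u^0)\cdot\varphi\,dy\,dx\,dt=0$. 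A fiberwise De Rham argument on the torus $Y$ then produces $u^1\in L^2((0,T)\times\Omega;H^1_\#(Y)/\mathbb{R})$ such that $\xi^0-\nabla_x u^0=\nabla_y u^1$.

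The step I expect to be technically most delicate is the admissibility lemma that underlies every limit passage: establishing the $L^2$-continuity and strong convergence of $\|\varphi(\cdot,\cdot,\cdot/\varepsilon)\|$ uniformly across a sufficiently large class of test functions to make the density argument go through. A secondary but genuine obstacle is the measurable-selection aspect of the De Rham step, namely constructing $u^1$ as an honest element of $L^2((0,T)\times\Omega;H^1_\#(Y)/\mathbb{R})$ with measurable dependence on $(t,x)$; this is most cleanly handled by invoking an $L^2$-valued Poincaré lemma on the torus, i.e.\ the Bogovskii/De Rham operator applied parametrically.
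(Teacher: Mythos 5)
The paper does not prove this theorem at all: it is recalled verbatim from the classical sources \cite{All92,Ngu89} as background material. Your reconstruction is the standard Nguetseng--Allaire argument (admissibility of oscillating test functions, weak-$*$ extraction plus Riesz for the first bullet; the $\varepsilon\psi(x)\phi(x/\varepsilon)$ test to kill $\nabla_y u^0$, comparison of weak limits to place $u^0$ in $L^2(0,T;H^1(\Omega))$, and orthogonality to $y$-divergence-free fields with a De Rham argument on the torus for the second), and it is correct. The only point worth flagging for completeness: when you assert that the two-scale limit $u^0$ of $u^\varepsilon$ and the two-scale limit $\xi^0$ of $\nabla u^\varepsilon$ exist along a common subsequence, you should extract the subsequence simultaneously (diagonally), which your diagonal/Banach--Alaoglu remark already covers; and the identification $\nabla_x u^0 = |Y|^{-1}\int_Y \xi^0\,dy$ should be read as a distributional identity on $(0,T)\times\Omega$, after which $u^0\in L^2(0,T;H^1(\Omega))$ follows because the right-hand side lies in $L^2((0,T)\times\Omega)^d$.
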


\begin{remark}
	The concepts of two-scale convergence and compactness for $\varepsilon$-periodic
	hypersurfaces were originally introduced in \cite{Radu96,AADH95} and have been used in \cite{FM14,KAM14}. For brevity, let $(u^{\varepsilon})$ be a sequence of functions in $L^2(0,T;L^2(\Gamma^{\varepsilon}))$. We say $u^{\varepsilon}$ two-scale converges to a limit $u^0$ in $L^2((0,T) \times\Omega\times \Gamma)$ with $\Gamma = \partial\Omega$ if for any $\varphi\in C_{0}^{\infty}((0,T)\times\Omega; C_{\#}^{\infty}(\Gamma))$ we have
	\[
	\lim_{\varepsilon\to0}\int_{0}^{T}\int_{\Gamma^{\varepsilon}}\varepsilon u^{\varepsilon}\left(t,x\right)\varphi\left(t,x,\frac{x}{\varepsilon}\right)dxdt=\frac{1}{\left|Y\right|}\int_{0}^{T}\int_{\Omega}\int_{\Gamma}u^{0}\left(t,x,y\right)\varphi\left(t,x,y\right)d\sigma(y)dxdt.
	\]
	Thereby, we obtain the two-scale compactness on surfaces that for each bounded sequence $\left(u^{\varepsilon}\right)$ in $L^{2}\left(0,T;L^{2}\left(\Gamma^{\varepsilon}\right)\right)$, one can extract a subsequence which two-scale converges to $u^0\in L^2((0,T) \times\Omega\times \Gamma)$. Furthermore, if $\left(u^{\varepsilon}\right)$ is bounded in $L^{\infty}\left(0,T;L^{\infty}\left(\Gamma^{\varepsilon}\right)\right)$, it then two-scale converges to a limit function $u^0\in L^{\infty}((0,T) \times\Omega\times \Gamma)$.
	
	It is important to note that, for our choice of $Y_0$, the interior
	extension from $H^{1}\left(\Omega^{\varepsilon}\right)$ into $H^{1}\left(\Omega\right)$
	exists with extension constants independent of $\varepsilon$
	(see \cite[Lemma 5]{HJ91} and \cite[Theorem 2.10]{CP99}).
\end{remark}

\begin{definition}
	\label{def:mathbb=00007BPeps=00007D}\textbf{The weak formulation of $(P^{\varepsilon})$}
	
	For $i\in\left\{ 1,...,N\right\} $, the triplet $\left(\theta^{\varepsilon},u_{i}^{\varepsilon},v_{i}^{\varepsilon}\right)$
	satisfying
	\[
	\theta^{\varepsilon},u_{i}^{\varepsilon}\in
	H^{1}\left(0,T;L^{2}\left(\Omega^{\varepsilon}\right)\right)\cap
	L^{\infty}\left(0,T;H^{1}\left(\Omega^{\varepsilon}\right)\right)\cap L^{\infty}\left(\left(0,T\right)\times\Omega^{\varepsilon}\right),
	\]
	\[
	v_{i}^{\varepsilon}\in
	H^{1}\left(0,T;L^{2}\left(\Gamma^{\varepsilon}\right)\right)\cap L^{\infty}\left(\left(0,T\right)\times\Gamma^{\varepsilon}\right).
	\]
	is a weak solution to $\left(P^{\varepsilon}\right)$ provided that
	\begin{equation}
	\left\{ \begin{array}{c}
	{\displaystyle \int_{\Omega^{\varepsilon}}\partial_{t}\theta^{\varepsilon}\varphi dx+\int_{\Omega^{\varepsilon}}\kappa^{\varepsilon}\nabla\theta^{\varepsilon}\cdot\nabla\varphi dx+\varepsilon\int_{\Gamma_{R}^{\varepsilon}}g_{0}\theta^{\varepsilon}\varphi dS_{\varepsilon}=\int_{\Omega^{\varepsilon}}\tau^{\varepsilon}\sum_{i=1}^{N}\nabla^{\delta}u_{i}^{\varepsilon}\cdot\nabla\theta^{\varepsilon}\varphi dx,}\\
	\\
	{\displaystyle \int_{\Omega^{\varepsilon}}\partial_{t}u_{i}^{\varepsilon}\phi_{i}dx+\int_{\Omega^{\varepsilon}}d_{i}^{\varepsilon}\nabla u_{i}^{\varepsilon}\cdot\nabla\phi_{i}dx+\varepsilon\int_{\Gamma^{\varepsilon}}\left(a_{i}^{\varepsilon}u_{i}^{\varepsilon}-b_{i}^{\varepsilon}v_{i}^{\varepsilon}\right)\phi_{i}dS_{\varepsilon}}\quad\quad\quad\quad\quad\quad\quad\quad\\
	{\displaystyle \quad\quad\quad\quad\quad\quad\quad\quad\quad\quad\quad\quad\quad\quad\quad\quad=\int_{\Omega^{\varepsilon}}R_{i}\left(u^{\varepsilon}\right)\phi_{i}dx+\int_{\Omega^{\varepsilon}}\rho_{i}^{\varepsilon}\nabla^{\delta}\theta^{\varepsilon}\cdot\nabla u_{i}^{\varepsilon}\phi_{i}dx,}\\
	\\
	{\displaystyle\varepsilon \int_{\Gamma^{\varepsilon}}\partial_{t}v_{i}^{\varepsilon}\psi_{i}dS_{\varepsilon}=\varepsilon\int_{\Gamma^{\varepsilon}}\left(a_{i}^{\varepsilon}u_{i}^{\varepsilon}-b_{i}^{\varepsilon}v_{i}^{\varepsilon}\right)\psi_{i}dS_{\varepsilon}},\quad\quad\quad\quad\quad\quad\quad\quad\quad\quad\quad\quad\quad\quad\quad\quad
	\end{array}\right.\label{eq:weak1}
	\end{equation}
	for all $\left(\varphi,\phi_{i},\psi_{i}\right)\in H^{1}\left(\Omega^{\varepsilon}\right)\times H^{1}\left(\Omega^{\varepsilon}\right)\times L^{2}\left(\Gamma^{\varepsilon}\right)$.
\end{definition}

\begin{theorem}\label{thm:wellposed}
	\emph{\textbf{Well-posedness and Positivity of solution}}
	
	Assume $\left(\mbox{A}_{1}\right)$-$\left(\mbox{A}_{2}\right)$ and $i\in\left\{ 1,...,N\right\} $. The microscopic problem $\left(P^{\varepsilon}\right)$ admits a unique solution $\left(\theta^{\varepsilon},u_{i}^{\varepsilon},v_{i}^{\varepsilon}\right)$ in the sense of Definition \ref{def:mathbb=00007BPeps=00007D}, belonging to \[
	K(T,M):=\left\{z\in L^2((0,T)\times \Omega^{\varepsilon}):|z|\le M \;\text{a.e. in}\; (0,T)\times\Omega^{\varepsilon}\right\}\] for some $M>0$. Additionally,
	\[
	\theta^{\varepsilon},u_{i}^{\varepsilon}\in
	H^{1}\left(0,T;L^{2}\left(\Omega^{\varepsilon}\right)\right)\cap
	L^{\infty}\left(0,T;H^{1}\left(\Omega^{\varepsilon}\right)\right)\cap L^{\infty}\left(\left(0,T\right)\times\Omega^{\varepsilon}\right),
	\]
	\[
	v_{i}^{\varepsilon}\in
	H^{1}\left(0,T;L^{2}\left(\Gamma^{\varepsilon}\right)\right)\cap L^{\infty}\left(\left(0,T\right)\times\Gamma^{\varepsilon}\right).
	\]
	Furthermore, this triplet $\left(\theta^{\varepsilon},u_{i}^{\varepsilon},v_{i}^{\varepsilon}\right)$ is positive and the following
	energy estimates hold
	\[
	\kappa_{\min}\left\Vert \nabla\theta^{\varepsilon}\left(t\right)\right\Vert _{L^{2}\left(\Omega^{\varepsilon}\right)}^{2}+\int_{0}^{t}\left\Vert \partial_{t}\theta^{\varepsilon}\left(t\right)\right\Vert _{L^{2}\left(\Omega^{\varepsilon}\right)}^{2}dt\le C,
	\]
	\[
	\left\Vert \nabla u_{i}^{\varepsilon}\left(t\right)\right\Vert _{L^{2}\left(\Omega^{\varepsilon}\right)}^{2}+\int_{0}^{T}\left(\left\Vert \partial_{t}u_{i}^{\varepsilon}\left(t\right)\right\Vert _{L^{2}\left(\Omega^{\varepsilon}\right)}^{2}+\left\Vert \partial_{t}v_{i}^{\varepsilon}\left(t\right)\right\Vert _{L^{2}\left(\Gamma^{\varepsilon}\right)}^{2}\right)dt\le C\quad\mbox{for a.e.}\;t\in\left(0,T\right].
	\]
	
\end{theorem}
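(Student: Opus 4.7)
The plan is to combine a fixed-point argument with standard parabolic energy estimates, exploiting crucially the smoothing effect of the mollified gradient $\nabla^{\delta}$. I would first decouple the system by introducing a map $\mathcal{F}:K(T,M)^{N+1}\to K(T,M)^{N+1}$ that sends a triple $(\tilde{\theta},\tilde{u}_i,\tilde{v}_i)$ to the solution of the linearized problem obtained by freezing the arguments of $\nabla^{\delta}$ and of $R_i$. Since $\nabla^{\delta}$ maps $L^{2}(\Omega^{\varepsilon})$ continuously into $[L^{\infty}(\Omega^{\varepsilon})]^{d}$ (the convolution with the smooth kernel $J_{\delta}$ together with the classical Young inequality gives $\|\nabla^{\delta}w\|_{L^{\infty}}\le C_{\delta}\|w\|_{L^{2}}$), the linearized heat and diffusion equations have bounded drift coefficients and fall inside the standard theory of parabolic equations with Neumann/Robin conditions; their existence and uniqueness can be obtained, for each fixed $\varepsilon$, by Galerkin approximation in a basis of $H^{1}(\Omega^{\varepsilon})$. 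The ODE \eqref{eq:odedeposit} can be solved pointwise on $\Gamma^{\varepsilon}$ by the variation-of-constants formula once $\tilde{u}_i$ is frozen.

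For the \emph{a priori} estimates I would first test the linearized heat equation with $\theta^{\varepsilon}$ and with $\partial_{t}\theta^{\varepsilon}$. The Robin contribution $\varepsilon\int_{\Gamma_{R}^{\varepsilon}}g_{0}^{\varepsilon}(\theta^{\varepsilon})^{2}\,dS_{\varepsilon}$ is absorbed into the left-hand side by positivity of $g_{0}^{\varepsilon}$, while the Soret drift is estimated by
\begin{equation*}
\left|\int_{\Omega^{\varepsilon}}\tau^{\varepsilon}\nabla^{\delta}\tilde{u}_{i}^{\varepsilon}\cdot\nabla\theta^{\varepsilon}\,\theta^{\varepsilon}\,dx\right|\le C_{\delta}\|\tilde{u}_{i}^{\varepsilon}\|_{L^{2}}\|\nabla\theta^{\varepsilon}\|_{L^{2}}\|\theta^{\varepsilon}\|_{L^{2}},
\end{equation*}
and absorbed using Young's inequality and ellipticity from $(\mathrm{A}_{1})$. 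The same mechanism yields the bound for $u_{i}^{\varepsilon}$, provided the quadratic Smoluchowski term $R_{i}$ is first controlled by an $L^{\infty}$ bound on the approximation (which is available from the definition of $K(T,M)$). Gronwall's inequality then closes the estimates stated in the theorem. The pair $(\partial_{t}v_{i}^{\varepsilon},v_{i}^{\varepsilon})$ is controlled directly via the explicit ODE solution and the trace inequality applied to $u_{i}^{\varepsilon}$.

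Positivity and the $L^{\infty}$ bound needed to identify $M$ would be obtained by testing the heat and diffusion equations with the negative parts $(\theta^{\varepsilon})^{-},(u_{i}^{\varepsilon})^{-}$ and with the truncations $(\theta^{\varepsilon}-M)^{+},(u_{i}^{\varepsilon}-M)^{+}$, respectively. For the $L^{\infty}$ bound, a Stampacchia/Moser iteration in the spirit of the one used in \cite{KAM14} delivers a bound on $\|\theta^{\varepsilon}\|_{L^{\infty}}$ and $\|u_{i}^{\varepsilon}\|_{L^{\infty}}$ in terms of the initial data and of the time horizon $T$; choosing $M$ larger than this bound makes $\mathcal{F}$ leave $K(T,M)$ invariant. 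The compactness needed to apply Schauder's fixed-point theorem follows from the parabolic $H^{1}(0,T;L^{2})\cap L^{\infty}(0,T;H^{1})$ regularity just proved, combined with an Aubin--Lions argument.

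Uniqueness is obtained by subtracting two solutions, testing with the differences $\theta_{1}^{\varepsilon}-\theta_{2}^{\varepsilon}$, $u_{i,1}^{\varepsilon}-u_{i,2}^{\varepsilon}$, and $v_{i,1}^{\varepsilon}-v_{i,2}^{\varepsilon}$, and using Lipschitz continuity of $R_{i}$ on bounded sets together with the $L^{\infty}\to L^{\infty}$ continuity of $\nabla^{\delta}$ to close a Gronwall argument. The main obstacle is the simultaneous presence of the cross-diffusion-like terms $\nabla^{\delta}u_{i}^{\varepsilon}\cdot\nabla\theta^{\varepsilon}$ and $\nabla^{\delta}\theta^{\varepsilon}\cdot\nabla u_{i}^{\varepsilon}$, which mix gradients of the unknowns and prevent a naive decoupling; the cure is precisely the mollification, whose $\varepsilon$-independent smoothing constant $C_{\delta}$ (recall $\delta$ is fixed, independent of $\varepsilon$) makes every drift term a lower-order perturbation and allows all Gronwall constants to remain $\varepsilon$-uniform, which is essential both for the bounds stated in Theorem~\ref{thm:wellposed} and for the corrector analysis carried out later.
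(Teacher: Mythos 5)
The paper does not contain a proof of Theorem~\ref{thm:wellposed}: it is recalled verbatim from \cite{KAM14} as a preliminary fact (the only well-posedness proof the paper actually carries out is for the macroscopic limit $(P^0)$, via a linearization argument leading to a Banach contraction in $K_1(M,T)$). There is therefore no internal proof to compare against; I can only judge your strategy on its own merits and against the parallel macroscopic argument.

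Your overall scheme --- linearize by freezing the mollified-gradient arguments and the Smoluchowski reaction, solve the resulting linear parabolic problem with Neumann/Robin data by Galerkin, solve the boundary ODE \eqref{eq:odedeposit} pointwise on $\Gamma^\varepsilon$ by variation of constants, obtain the energy estimates by testing with $\theta^\varepsilon$, $\partial_t\theta^\varepsilon$, $u_i^\varepsilon$, $\partial_t u_i^\varepsilon$, and close via Aubin--Lions plus Schauder --- is the standard and in outline correct route. You have also correctly isolated the central structural observation: the Soret and Dufour cross terms are lower order precisely because $\nabla^\delta$ regularizes $L^2(\Omega^\varepsilon)$ into $[L^\infty(\Omega^\varepsilon)]^d$ with a constant $C_\delta$ that is $\varepsilon$-uniform (this is exactly \eqref{thm:moliinequal-2}--\eqref{thm:moliinequal}), so all Gronwall constants, and hence the energy bounds claimed in the theorem, are indeed $\varepsilon$-independent.

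Two points require more than what you state. First, positivity of the $u_i^\varepsilon$ is not as immediate as ``test with $(u_i^\varepsilon)^-$'': the Smoluchowski source $R_i(u^\varepsilon)$ couples all species quadratically, and the integral $\int_{\Omega^\varepsilon} R_i(u^\varepsilon)(u_i^\varepsilon)^-\,dx$ contains mixed products whose sign is not evident a priori. One needs to exploit the hierarchical structure of $R_i$ (the production term $\frac{1}{2}\sum_{k+j=i}\beta_{kj}u_k^\varepsilon u_j^\varepsilon$ involves only indices $<i$) and argue by induction on $i$, or work with a truncated/nonnegative-cutoff reaction and pass to the limit; your proposal should say which. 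Second, the $L^\infty$ bound is not a side refinement here; it fixes the radius $M$ of the invariant ball and thus the constants $C_{R_i}$ that control the quadratic reaction. A Stampacchia/Moser iteration on a genuinely quadratic source generally closes only on a short time horizon (or under a structural hypothesis on the $\beta_{ij}$ preventing supercritical growth), exactly as the macroscopic analogue in the paper is only a \emph{local} result. Since the theorem as stated fixes $T$ in advance, you should either make explicit the smallness of $T$ entering the choice of $M$, or indicate why the truncated $N$-species system admits an $\varepsilon$- and $T$-uniform $L^\infty$ bound (e.g.\ via mass conservation arguments specific to Smoluchowski kernels). Without that, the invariance $\mathcal{F}(K(T,M))\subset K(T,M)$ is asserted rather than established, and the fixed-point argument does not yet close.
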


We denote
by $\left(P^{0}\right)$ the strong formulation of the macroscopic
(limit) problem. We introduce below the limit problem whose precise structure has been obtained via a two-scale convergence procedure in \cite{KAM14}.

\begin{theorem}\label{thm:P0}
	\emph{\textbf{Strong formulation of the macroscopic problem -- $(P^{0})$}}
	
	Assume $\left(\mbox{A}_{1}\right)$-$\left(\mbox{A}_{2}\right)$. For $i\in\left\{ 1,...,N\right\} $, 
	the triplet $\left(\theta^{0},u_{i}^{0},v_{i}^{0}\right)$ of limit solutions $\left(\theta^{\varepsilon},u_{i}^{\varepsilon},v_{i}^{\varepsilon}\right)$ to $\left(P^{\varepsilon}\right)$ in the sense of Definition \ref{def:mathbb=00007BPeps=00007D} satisfies the following macroscopic system
	\begin{equation}
	{\displaystyle \partial_{t}\theta^{0}+\nabla\cdot\left(-\mathbb{K}\nabla\theta^{0}\right)+g_{0}\frac{\left|\Gamma_{R}\right|}{\left|Y_{1}\right|}\theta^{0}=\sum_{i=1}^{N}\left(\mathbb{T}^{i}\nabla^{\delta}u_{i}^{0}\right)\cdot\nabla\theta^{0}}\quad\mbox{in}\;Q_{T},
	\label{eq:strongmac1}
	\end{equation}
	\begin{equation}
	{\displaystyle \partial_{t}u_{i}^{0}+\nabla\cdot\left(-\mathbb{D}^{i}\nabla u_{i}^{0}\right)+A_{i}u_{i}^{0}-B_{i}v_{i}^{0}}=\left(\mathbb{F}^{i}\nabla u_{i}^{0}\right)\cdot\nabla^{\delta}\theta^{0}+R_{i}\left(u^{0}\right)\quad\mbox{in}\;Q_{T},
	\label{eq:strongmac2}
	\end{equation}
	subject to the boundary conditions
	\begin{equation}
	-\mathbb{K}\nabla\theta^{0}\cdot\mbox{n}=0\quad\mbox{on}\;\left(0,T\right)\times\partial\Omega,
	\label{eq:strongmac3}
	\end{equation}
	\begin{equation}
	-\mathbb{D}^{i}\nabla u_{i}^{0}\cdot\mbox{n}=0\quad\mbox{on}\;\left(0,T\right)\times\partial\Omega,
	\label{eq:strongmac4}
	\end{equation}
	and associated with the ordinary differential equations
	\begin{equation}
	{\displaystyle \partial_{t}v_{i}^{0}=A_{i}u_{i}^{0}-B_{i}v_{i}^{0}}\quad\mbox{in}\;Q_{T},\label{eq:2.14}
	\end{equation}
	where we have denoted by $\mathbb{K}=K_{0}\mathbb{I}+\left(K_{ij}\right)_{ij}$,
	$\mathbb{T}^{i}=T_{0}^{i}\mathbb{I}+\left(T_{jk}^{i}\right)_{jk}$, $\mathbb{D}^{i}=D_{i}\mathbb{I}+\mathbb{D}_{0}^{i}$,
	$\mathbb{F}^{i}=F_{i}\mathbb{I}+\mathbb{F}_{0}^{i}$ for $j,k\in \left\{1,...,d\right\}$ with $\mathbb{I}$
	standing for the identity matrix and the quantities $K_0, K_{ij}, T_0^i, T_{jk}^{i}, D_{i}, \mathbb{D}_{0}^{i}, F_{i}, \mathbb{F}^{i}, A_{i}, B_{i}$ being effective constants corresponding, respectively, to the oscillating
	coefficients and defined in \eqref{matrix1}-\eqref{matrix5}. 
	
	Furthermore, the initial conditions are provided by
	\begin{equation}
	\theta^{0}\left(t=0\right)=\theta^{0,0}\quad\mbox{in}\;\overline{\Omega},
	\label{eq:strongmac5}
	\end{equation}
	\begin{equation}
	u_{i}^{0}\left(t=0\right)=u_{i}^{0,0}\quad\mbox{in}\;\overline{\Omega},
	\label{eq:strongmac6}
	\end{equation}
	\begin{equation}
	v_{i}^{0}\left(t=0\right)=v_{i}^{0,0}\quad\mbox{on}\;\Gamma.
	\label{eq:strongmac7}
	\end{equation}
	
\end{theorem}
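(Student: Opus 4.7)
The plan is to pass to the two-scale limit in the weak formulation \eqref{eq:weak1} of $(P^{\varepsilon})$ and identify the effective system by solving the associated cell problems. By the energy bounds provided in Theorem \ref{thm:wellposed} together with the interior extension property for perforated domains recalled in the previous remark (\cite[Lemma 5]{HJ91}), the sequences $(\theta^{\varepsilon})$ and $(u_i^{\varepsilon})$ are uniformly bounded in $L^{\infty}(0,T;H^{1}(\Omega^{\varepsilon}))$, with time derivatives bounded in $L^2$, and $(v_i^{\varepsilon})$ is uniformly bounded in $L^{\infty}((0,T)\times\Gamma^{\varepsilon})$. By the two-scale compactness results stated earlier, I would extract subsequences (not relabeled) such that $\theta^{\varepsilon}\stackrel{2}{\rightharpoonup}\theta^{0}$, $\nabla\theta^{\varepsilon}\stackrel{2}{\rightharpoonup}\nabla_x\theta^{0}+\nabla_y\theta^{1}$ for some $\theta^{0}\in L^{2}(0,T;H^{1}(\Omega))$ and $\theta^{1}\in L^{2}((0,T)\times\Omega;H^{1}_{\#}(Y)/\mathbb{R})$, and analogously for each $u_i^{\varepsilon}$; the surface two-scale compactness supplies $v_i^{\varepsilon}\stackrel{2}{\rightharpoonup}v_i^{0}$.

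Next I would insert into \eqref{eq:weak1} test functions of the form $\varphi(t,x)+\varepsilon\varphi_{1}(t,x,x/\varepsilon)$ and $\phi_i(t,x)+\varepsilon\phi_{i,1}(t,x,x/\varepsilon)$ with $\varphi,\phi_i\in C^{\infty}_{c}((0,T)\times\overline{\Omega})$ and $\varphi_{1},\phi_{i,1}\in C^{\infty}_{c}((0,T)\times\Omega;C^{\infty}_{\#}(Y))$, and pass to the two-scale limit. Taking first $\varphi=\phi_i=0$ decouples the microscopic cell problems, whose solutions can be expressed as $\theta^{1}=\sum_k\partial_{x_k}\theta^{0}\chi_k$ and $u_i^{1}=\sum_k \partial_{x_k}u_i^{0}\eta_{k}^{i}$ in terms of standard correctors $\chi_k,\eta_{k}^{i}\in H^{1}_{\#}(Y_1)/\mathbb{R}$. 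Substituting these back and integrating over $Y_1$ produces the effective tensors $\mathbb{K}$ and $\mathbb{D}^{i}$ in \eqref{eq:strongmac1}-\eqref{eq:strongmac2}. The Robin-type surface integrals $\varepsilon\int_{\Gamma_R^{\varepsilon}}g_{0}^{\varepsilon}\theta^{\varepsilon}\varphi\,dS_{\varepsilon}$ and $\varepsilon\int_{\Gamma^{\varepsilon}}(a_i^{\varepsilon}u_i^{\varepsilon}-b_i^{\varepsilon}v_i^{\varepsilon})\phi_i\,dS_{\varepsilon}$ pass to the limit via surface two-scale convergence, generating the factors $|\Gamma_R|/|Y_1|$ and the effective constants $A_i,B_i$; the pointwise ODE on $\Gamma^{\varepsilon}$ yields \eqref{eq:2.14} after averaging in $y\in\Gamma$.

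The main obstacle is passing to the limit in the Soret-Dufour couplings $\tau^{\varepsilon}\nabla^{\delta}u_i^{\varepsilon}\cdot\nabla\theta^{\varepsilon}$ and $\rho_i^{\varepsilon}\nabla^{\delta}\theta^{\varepsilon}\cdot\nabla u_i^{\varepsilon}$, and in the quadratic Smoluchowski term $R_i(u^{\varepsilon})$, since two-scale convergence is only weak in character. I would exploit the fact that $\nabla^{\delta}$ is a convolution with a mollifier whose radius $\delta$ is fixed independently of $\varepsilon$: combined with the strong $L^2(Q_T)$ convergence of $\theta^{\varepsilon},u_i^{\varepsilon}$ obtained from the uniform time-derivative bound and an Aubin--Lions compactness argument applied on the extensions to $\Omega$, this yields uniform convergence of $\nabla^{\delta}\theta^{\varepsilon}$ and $\nabla^{\delta}u_i^{\varepsilon}$ on $Q_T$. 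The product of a strongly convergent sequence with the two-scale convergent gradient then passes to the limit, producing the tensors $\mathbb{T}^i$ and $\mathbb{F}^i$ after cell averaging. For $R_i(u^{\varepsilon})$, the strong $L^2$ convergence together with the uniform $L^{\infty}$ bound allows application of the dominated convergence theorem.

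Finally, since the limit system $(P^{0})$ admits a unique weak solution (which one verifies by a Gronwall argument on the difference of two candidates, the nonlinearities being Lipschitz on bounded sets thanks to the uniform $L^{\infty}$ bounds), the whole sequence converges and no further extraction is needed. The boundary conditions \eqref{eq:strongmac3}-\eqref{eq:strongmac4} follow by choosing test functions $\varphi,\phi_i\in C^{\infty}(\overline{\Omega})$ that do not vanish on $\partial\Omega$ and identifying the boundary contributions after integration by parts in the limit equation.
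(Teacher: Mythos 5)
Your sketch is correct and is essentially the same argument the paper relies on: the paper does not reprove this theorem but cites the two-scale convergence derivation from \cite{KAM14}, and your outline (energy bounds plus extension, two-scale compactness in the bulk and on $\Gamma^{\varepsilon}$, oscillating test functions decoupling the cell problems, strong $L^2$ convergence via Aubin--Lions combined with the $\varepsilon$-independent mollifier $\nabla^{\delta}$ to pass to the limit in the Soret--Dufour and Smoluchowski terms, and uniqueness of $(P^0)$ to upgrade subsequential to full convergence) is precisely that route. The only point worth tightening is the claim of \emph{uniform} convergence of $\nabla^{\delta}\theta^{\varepsilon}$ on $Q_T$: Aubin--Lions gives strong $L^2(Q_T)$ (or $C([0,T];L^2)$) convergence of $\theta^{\varepsilon}$, which the mollifier upgrades to convergence in $C([0,T];L^{\infty}(\Omega))$ or $L^2(0,T;L^{\infty}(\Omega))$, and this is the strength actually needed to multiply against the two-scale-convergent gradient; phrasing it as uniform on $Q_T$ is a slight overstatement but does not affect the argument.
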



\begin{theorem}\label{thm:weakmacro}
	\emph{\textbf{The weak formulation of $(P^{0})$}}
	
	Assume $\left(\mbox{A}_{1}\right)$-$\left(\mbox{A}_{2}\right)$ and take $i\in\left\{ 1,...,N\right\} $, the triplet $\left(\theta^{0},u_{i}^{0},v_{i}^{0}\right)$
	satifying
	\[
	\theta^{0},u_{i}^{0}\in
	H^{1}\left(0,T;L^{2}\left(\Omega\right)\right)\cap
	L^{2}\left(0,T;H^{1}\left(\Omega\right)\right)\cap L^{\infty}\left(\left(0,T\right)\times\Omega\right),
	\]
	\[
	v_{i}^{0}\in H^{1}\left(0,T;L^{2}\left(\Omega\right)\right)\cap L^{\infty}\left(\left(0,T\right)\times\Omega\right),
	\]
	is a weak solution to $(P^0)$ provided that
	\begin{equation}
	\left\{ \begin{array}{c}
	{\displaystyle \int_{\Omega}\partial_{t}\theta^{0}\varphi dx+\int_{\Omega}\mathbb{K}\nabla\theta^{0}\cdot\nabla\varphi dx+g_{0}\frac{\left|\Gamma_{R}\right|}{\left|Y_{1}\right|}\int_{\Omega}\theta^{0}\varphi dx=\int_{\Omega}\sum_{i=1}^{N}\left(\mathbb{T}^{i}\nabla^{\delta}u_{i}^{0}\right)\cdot\nabla\theta^{0}\varphi dx,}\\
	\\
	{\displaystyle \int_{\Omega}\partial_{t}u_{i}^{0}\phi_{i}dx+\int_{\Omega}\mathbb{D}^{i}\nabla u_{i}^{0}\cdot\nabla\phi_{i}dx+\int_{\Omega}\left(A_{i}u_{i}^{0}-B_{i}v_{i}^{0}\right)\phi_{i}dx}\quad\quad\quad\quad\quad\quad\quad\quad\quad\\
	{\displaystyle \quad\quad\quad\quad\quad\quad\quad\quad\quad\quad\quad\quad\quad\quad\quad=\int_{\Omega}\left(\mathbb{F}^{i}\nabla u_{i}^{0}\right)\cdot\nabla^{\delta}\theta^{0}\phi_{i}dx+\int_{\Omega}R_{i}\left(u^{0}\right)\phi_{i}dx,}\\
	\\
	{\displaystyle \int_{\Omega}\partial_{t}v_{i}^{0}\psi_{i}dx=\int_{\Omega}\left(A_{i}u_{i}^{0}-B_{i}v_{i}^{0}\right)\psi_{i}dx},\quad\quad\quad\quad\quad\quad\quad\quad\quad\quad\quad\quad\quad\quad\quad\quad\quad
	\end{array}\right.\label{eq:weak2}
	\end{equation}
	hold for all $\left(\varphi,\phi_{i},\psi_{i}\right)\in C^{\infty}\left(\Omega\right)\times C^{\infty}\left(\Omega\right)\times C^{\infty}\left(\Omega\right)$.
\end{theorem}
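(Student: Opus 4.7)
The plan is to derive the weak formulation directly from the strong formulation already recorded in Theorem \ref{thm:P0} and then verify that the claimed functional-analytic regularity of the triplet $(\theta^{0},u_{i}^{0},v_{i}^{0})$ is inherited from the microscopic bounds in Theorem \ref{thm:wellposed} through the two-scale passage to the limit performed in \cite{KAM14}.

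First, I would test the strong equations \eqref{eq:strongmac1}--\eqref{eq:2.14} against smooth functions $\varphi,\phi_{i},\psi_{i}\in C^{\infty}(\Omega)$. For the parabolic equation on $\theta^{0}$ I multiply \eqref{eq:strongmac1} by $\varphi$ and integrate over $\Omega$; applying the divergence theorem to the term $\nabla\cdot(-\mathbb{K}\nabla\theta^{0})\varphi$ produces a boundary contribution on $\partial\Omega$ that vanishes by \eqref{eq:strongmac3}. This yields the first line of \eqref{eq:weak2}. A strictly analogous computation on \eqref{eq:strongmac2} tested against $\phi_{i}$, invoking now \eqref{eq:strongmac4}, gives the second line. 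The third line is obtained by multiplying the ODE \eqref{eq:2.14} by $\psi_{i}$ and integrating pointwise in $x$, no spatial integration by parts being required. Observe that, under assumption $(\mathrm{A}_{1})$, the effective tensors $\mathbb{K},\mathbb{D}^{i},\mathbb{T}^{i},\mathbb{F}^{i}$ and the scalar coefficients $A_{i},B_{i}$ built from the cell problems are bounded, so every integral in \eqref{eq:weak2} makes sense provided the stated regularity of $(\theta^{0},u_{i}^{0},v_{i}^{0})$ is established.

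Next, I would verify the regularity assertions. The interior extension operator for $H^{1}(\Omega^{\varepsilon})\hookrightarrow H^{1}(\Omega)$ with $\varepsilon$-independent norm (see the Remark following the two-scale compactness theorem) allows us to extend $\theta^{\varepsilon},u_{i}^{\varepsilon}$ to the whole of $\Omega$ without losing the uniform bounds of Theorem \ref{thm:wellposed}. Combining these with the uniform $L^{\infty}$-bounds (which survive the a.e.\ convergence obtained from strong $L^{2}$-compactness of the extended sequences), and with the uniform $H^{1}(0,T;L^{2})$ bound of the time derivatives, one obtains by lower semicontinuity of norms under weak/two-scale convergence that
\[
\theta^{0},u_{i}^{0}\in H^{1}(0,T;L^{2}(\Omega))\cap L^{2}(0,T;H^{1}(\Omega))\cap L^{\infty}((0,T)\times\Omega).
\]
For $v_{i}^{0}$, the uniform bound of $\partial_{t}v_{i}^{\varepsilon}$ on $\Gamma^{\varepsilon}$ together with the two-scale compactness on $\varepsilon$-periodic hypersurfaces transfers to an $H^{1}(0,T;L^{2}(\Omega))\cap L^{\infty}((0,T)\times\Omega)$ bound on the limit $v_{i}^{0}$ (seen as a function of $x\in\Omega$ after factoring out the $y$-dependence, which is trivial since the ODE \eqref{eq:2.14} is pointwise in $x$).

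The main technical obstacle I foresee is the correct identification and limit passage for the Smoluchowski production term $R_{i}(u^{\varepsilon})$ and for the Soret--Dufour cross-terms $\tau^{\varepsilon}\sum_{i}\nabla^{\delta}u_{i}^{\varepsilon}\cdot\nabla\theta^{\varepsilon}$ and $\rho_{i}^{\varepsilon}\nabla^{\delta}\theta^{\varepsilon}\cdot\nabla u_{i}^{\varepsilon}$: although mollification provides the strong compactness needed to pass products of two gradients to the limit, one still has to ensure that the two-scale limits of $\nabla^{\delta}u_{i}^{\varepsilon}$ and $\nabla\theta^{\varepsilon}$ combine to produce exactly the effective tensors $\mathbb{T}^{i}$ and $\mathbb{F}^{i}$ in the form advertised in \eqref{eq:weak2}. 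Once this identification is carried out, density of $C^{\infty}(\Omega)$ in $H^{1}(\Omega)$ extends validity of \eqref{eq:weak2} to $H^{1}$ test functions, which is the form that will be effectively used in the sequel when proving the corrector estimate \eqref{eq:pseudores}.
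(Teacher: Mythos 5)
The statement you are addressing is presented in the paper as a \emph{recalled} result from \cite{KAM14}, not as a theorem proved in the paper itself: the surrounding text says explicitly that the preliminary subsection lists ``the fact already known concerning the weak solvability and periodic homogenization of $(P^{\varepsilon})$,'' and Theorem~\ref{thm:P0} is introduced as ``the limit problem whose precise structure has been obtained via a two-scale convergence procedure in \cite{KAM14}.'' So there is no in-paper proof to compare yours against.

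With that caveat, your outline is logically sound, but it inverts the natural order of derivation in the cited source. In a two-scale homogenization argument one derives the \emph{weak} macroscopic identity first, by inserting oscillating test functions $\varphi(x)+\varepsilon\varphi_1(x,x/\varepsilon)$ into the microscopic weak form~\eqref{eq:weak1}, passing to the two-scale limit using the compactness facts, and only afterwards reading off the strong form~\eqref{eq:strongmac1}--\eqref{eq:2.14} together with the cell problems~\eqref{eq:cell1}--\eqref{eq:cell2}. You instead take the strong macroscopic system of Theorem~\ref{thm:P0} as given and integrate by parts to recover~\eqref{eq:weak2}. This is a valid deduction within the paper's stated hierarchy (Theorem~\ref{thm:P0} precedes Theorem~\ref{thm:weakmacro}), but it buys nothing independent, since Theorem~\ref{thm:P0} is itself the recalled two-scale limit; the substance of the claim really lives in the regularity of the limit triplet and in the limit passage for the nonlinear cross-terms, both of which you defer. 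Two small patches there: for the $L^\infty((0,T)\times\Omega)$ bound on the limits you invoke ``a.e.\ convergence obtained from strong $L^2$-compactness,'' but that strong compactness itself needs an Aubin--Lions type argument combining the uniform $H^1(0,T;L^2)$ bound on $\partial_t\theta^\varepsilon,\partial_t u_i^\varepsilon$ with the uniform $L^\infty(0,T;H^1)$ bound on the extensions; and for $v_i^0$, the passage from the surface-supported microscopic quantity to a bulk limit in $L^2(\Omega)$ requires the two-scale convergence on $\varepsilon$-hypersurfaces followed by averaging over $y\in\Gamma$, which merits an explicit sentence rather than being folded into ``factoring out the $y$-dependence.'' The identification of $\mathbb{T}^i$, $\mathbb{F}^i$ and $R_i(u^0)$ as limits of the corresponding products is a genuine technical point; you correctly flag it but do not close it, and it is precisely the content that \cite{KAM14} supplies.
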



For $i\in \left\{1,...,N\right\}$ and $j,k\in \left\{1,...,d\right\}$, the effective constants in Theorem \ref{thm:P0} are defined, as follows:
\begin{equation}
K_{0}:=\frac{1}{\left|Y_{1}\right|}\int_{Y_{1}}\kappa\left(y\right)dy,\quad K_{ij}:=\frac{1}{\left|Y_{1}\right|}\int_{Y_{1}}\kappa\left(y\right)\frac{\partial\bar{\theta}^{j}}{\partial y_{i}}dy,
\label{matrix1}
\end{equation}
\begin{equation}
T_{0}^{i}:=\frac{1}{\left|Y_{1}\right|}\int_{Y_{1}}\tau_{i}\left(y\right)dy,\quad T_{jk}^{i}:=\frac{1}{\left|Y_{1}\right|}\int_{Y_{1}}\tau_{i}\left(y\right)\frac{\partial\bar{\theta}^{j}}{\partial y_{i}}dy,
\label{matrix2}
\end{equation}
\begin{equation}
D_{i}:=\frac{1}{\left|Y_{1}\right|}\int_{Y_{1}}d_{i}\left(y\right)dy,\quad\mathbb{D}_{0}^{i}:=\left(\frac{1}{\left|Y_{1}\right|}\int_{Y_{1}}d_{i}\left(y\right)\frac{\partial\bar{u}_{i}^{j}}{\partial y_{k}}dy\right)_{jk},
\label{matrix3}
\end{equation}
\begin{equation}
F_{i}:=\frac{1}{\left|Y_{1}\right|}\int_{Y_{1}}\rho_{i}\left(y\right)dy,\quad\mathbb{F}^{i}:=\left(\frac{1}{\left|Y_{1}\right|}\int_{Y_{1}}\rho_{i}\left(y\right)\frac{\partial\bar{u}_{i}^{j}}{\partial y_{k}}dy\right)_{jk},
\label{matrix4}
\end{equation}
\begin{equation}
A_{i}:=\frac{1}{\left|Y_{1}\right|}\int_{\partial Y_0}a_{i}dy,\quad B_{i}:=\frac{1}{\left|Y_{1}\right|}\int_{\partial Y_0}b_{i}dy.
\label{matrix5}
\end{equation}

Hereby, the functions $\bar{\theta}$ and $\bar{u}_{i}$ linearly formulate
the limit functions $\theta^{1}$ and $u_{i}^{1}$ by $\theta^{1}:=\bar{\theta}\cdot\nabla_{x}\theta^{0}=\displaystyle{\sum_{j=1}^{d}}\partial_{x_j}\theta^{0} \bar{\theta}^{j}$
and $u_{i}^{1}:=\bar{u}_{i}\cdot\nabla_{x} u_{i}^{0}=\displaystyle{\sum_{j=1}^{d}}\partial_{x_j}u_i^0 \bar{u}_{i}^{j}$ for $i\in \left\{1,...,N\right\}$. Moreover, they solve, respectively, the cell problems introduced in the following Theorem.

\begin{theorem}\label{thm:cell}
	\emph{\textbf{The cell problems}}
	
	Assume $\left(\mbox{A}_{1}\right)$ holds. The limit functions $\theta^{1}$ and $u_{i}^{1}$ defined as above 
	solve the following cell problems:
	\begin{equation}
	\left\{ \begin{array}{c}
	\nabla_{y}\cdot\left(-\kappa\left(y\right)\nabla_{y}\bar{\theta}^{j}(x,y)\right)=\nabla_{y}\cdot(\kappa n_j)\quad\mbox{in}\;Y_{1},\\
	\\
	-\kappa\left(y\right)\nabla_{y}\bar{\theta}^{j}\cdot\mbox{n}=\kappa n_{j}\quad\mbox{on}\;\partial Y_{0},\\
	\\
	\bar{\theta}^{j}\;\mbox{is}\;Y\mbox{-periodic},
	\end{array}\right.\label{eq:cell1}
	\end{equation}
	\begin{equation}
	\left\{ \begin{array}{c}
	\nabla_{y}\cdot\left(-d_{i}\left(y\right)\nabla_{y}\bar{u}_{i}^{j}(x,y)\right)=\nabla_{y}\cdot (d_i n_j)\quad\mbox{in}\;Y_{1},\\
	\\
	-d_{i}\left(y\right)\nabla_{y}\bar{u}_{i}^{j}\cdot\mbox{n}=d_{i}n_{j}\quad\mbox{on}\;\partial Y_{0},\\
	\\
	\bar{u}_{i}^{j}\;\mbox{is}\;Y\mbox{-periodic},
	\end{array}\right.\label{eq:cell2}
	\end{equation}
	where $n_j$ is the $j$th unit vector of $\mathbb{R}^d$ and $i\in \left\{1,...,N\right\},j\in \left\{1,...,d\right\}$.
	Furthermore,
	\begin{itemize}
		\item[(i)] If $\kappa,d_{i}\in [H^{1}\left(\bar{Y}_{1}\right)]^{d^2}$ are Lipschitz continuous, the system \eqref{eq:cell1}-\eqref{eq:cell2} 
		admits a unique solution $\left(\bar{\theta}^{j},\bar{u}_{i}^{j}\right)\in H^{2}_{loc}\left(Y_{1}\right)\times H^{2}_{loc}\left(Y_{1}\right)$;
		\item[(ii)] If $k,d_{i}\in [H^{1}\left(Y_{1}\right)]^{d^2}\cap [H^{-\frac{1}{2}+s}\left(\partial Y_{0}\right)]^{d^2}$
		for every $s\in\left(-\frac{1}{2},\frac{1}{2}\right)$ are Lipschitz continuous, the system  \eqref{eq:cell1}-\eqref{eq:cell2}  admits a unique
		solution $\left(\bar{\theta}^{j},\bar{u}_{i}^{j}\right)\in H^{1+s}\left(Y_{1}\right)\times H^{1+s}\left(Y_{1}\right)$.
	\end{itemize}
\end{theorem}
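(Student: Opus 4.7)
The two cell problems \eqref{eq:cell1} and \eqref{eq:cell2} are structurally identical, so I would carry the argument out for $\bar{\theta}^{j}$ in detail; the case of $\bar{u}_{i}^{j}$ then follows by replacing $\kappa$ with $d_{i}$ throughout. The natural functional framework is the Hilbert space
\[
V := \left\{v \in H_{\#}^{1}(Y_{1}) : \textstyle\int_{Y_{1}} v\, dy = 0 \right\},
\]
equipped with the norm $\|\nabla_{y} v\|_{L^{2}(Y_{1})}$, which is indeed a norm on $V$ by the Poincar\'e--Wirtinger inequality for $Y$-periodic functions with vanishing mean on $Y_{1}$. Testing the PDE against $\phi \in V$, integrating by parts, using the Neumann condition on $\partial Y_{0}$, and exploiting the cancellation of opposite-face contributions on $\partial Y$ due to $Y$-periodicity yields the weak formulation
\[
a(\bar{\theta}^{j}, \phi) := \int_{Y_{1}} \kappa(y)\, \nabla_{y} \bar{\theta}^{j}\cdot \nabla_{y} \phi\, dy = -\int_{Y_{1}} \kappa(y)\, n_{j} \cdot \nabla_{y} \phi\, dy =: \ell(\phi),
\]
for every $\phi \in V$. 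Continuity of $a$ and $\ell$ follows from $(\mathrm{A}_{1})$, and coercivity of $a$ on $V$ follows from the uniform ellipticity of $\kappa$ combined with Poincar\'e--Wirtinger. Before invoking Lax--Milgram I would verify the Fredholm compatibility condition obtained by testing with $\phi \equiv 1$, namely $\int_{Y_{1}} \nabla_{y} \cdot (\kappa n_{j})\, dy + \int_{\partial Y_{0}} (\kappa n_{j}) \cdot \mathrm{n}\, d\sigma = 0$, which is immediate from the divergence theorem together with the cancellation of periodic contributions on $\partial Y$. Lax--Milgram then delivers a unique $\bar{\theta}^{j} \in V$, and thus a unique solution modulo constants in $H_{\#}^{1}(Y_{1})$.

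For claim (i), I would establish the interior $H^{2}$-regularity by Nirenberg's difference-quotient technique. Fix a cutoff $\zeta \in C_{c}^{\infty}(Y_{1})$ and a direction $e_{\ell}$, and insert the test function $-D_{\ell}^{-h}(\zeta^{2}\, D_{\ell}^{h} \bar{\theta}^{j})$ into the weak formulation for sufficiently small $|h|$. Because $\kappa$ is Lipschitz, the commutator $D_{\ell}^{h}(\kappa \nabla_{y} \bar{\theta}^{j}) - \kappa\, \nabla_{y} D_{\ell}^{h} \bar{\theta}^{j}$ is controlled in $L^{2}$ by $\|\nabla \kappa\|_{L^{\infty}} \|\nabla_{y} \bar{\theta}^{j}\|_{L^{2}}$; the standard Cauchy--Schwarz and absorption argument then yields $\|\nabla_{y} D_{\ell}^{h} \bar{\theta}^{j}\|_{L^{2}(\mathrm{supp}\, \zeta)} \le C$ uniformly in $h$. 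Passing to the limit $h \to 0$ gives $\bar{\theta}^{j} \in H^{2}_{loc}(Y_{1})$, and the same reasoning applied with $d_{i}$ in place of $\kappa$ yields $\bar{u}_{i}^{j} \in H^{2}_{loc}(Y_{1})$.

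For claim (ii), the additional hypothesis $\kappa,\, d_{i} \in [H^{-\frac{1}{2}+s}(\partial Y_{0})]^{d^{2}}$ ensures that the Neumann trace $(\kappa n_{j}) \cdot \mathrm{n}$ and, similarly, $(d_{i} n_{j}) \cdot \mathrm{n}$ belong to $H^{-\frac{1}{2}+s}(\partial Y_{0})$, while the source term $\nabla_{y} \cdot (\kappa n_{j}) \in L^{2}(Y_{1})$ since $\kappa \in H^{1}(Y_{1})$. I would then invoke the fractional elliptic regularity theory for mixed Neumann/periodic boundary value problems on Lipschitz domains (in the spirit of Savar\'e or Jerison--Kenig): a divergence-form elliptic equation with Lipschitz coefficients, $L^{2}$ source, and Neumann datum of regularity $H^{-\frac{1}{2}+s}$ on a Lipschitz boundary has its weak solution in $H^{1+s}$ up to that boundary. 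The periodic side of the boundary is handled by identifying $\partial Y$ with the flat torus, which reduces the relevant portion of $\partial Y_{1}$ to the single Lipschitz interface $\partial Y_{0}$.

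\emph{Main obstacle.} Step~1 and the interior $H^{2}$-regularity in Step~2 are standard once the compatibility condition is checked. The genuinely delicate point is Step~3: fractional Sobolev regularity up to a merely Lipschitz interface cannot be obtained by tangential difference quotients alone, and one must appeal to sharp fractional-order regularity results for Neumann boundary data in $H^{-\frac{1}{2}+s}(\partial Y_{0})$, being attentive to the interplay between the Neumann condition on $\partial Y_{0}$ and the periodicity condition on $\partial Y$, and to the fact that the allowed regularity index is restricted precisely to $s \in (-\tfrac12, \tfrac12)$ because beyond this range the Lipschitz geometry of $\partial Y_{0}$ limits the attainable Sobolev exponent.
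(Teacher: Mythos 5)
Your proposal is mathematically sound, but it takes a considerably more self-contained route than the paper does. The paper treats Theorem~\ref{thm:cell} almost entirely by reference: the cell problems themselves are taken as the outcome of the two-scale convergence procedure established in \cite{KAM14}, and the regularity assertions are delegated to the literature. Specifically, inside the proof of Theorem~\ref{mainthm:1} the authors justify claim~(i) by invoking \cite[Theorem~2.2.2.3]{Grisvard85} together with the observation that $Y_1$ is a non-convex domain with Lipschitz boundary (so that global $H^2$-regularity fails but local $H^2$-regularity survives under Lipschitz coefficients), and claim~(ii) by citing \cite[Theorem~4]{Giu98} for fractional regularity $H^{1+s}_\#(Y_1)$ with $s\in(-\tfrac12,\tfrac12)$. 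You instead set up the variational framework in the mean-free subspace of $H^1_\#(Y_1)$, establish well-posedness directly via Lax--Milgram after confirming the Fredholm compatibility, prove (i) from scratch by Nirenberg difference quotients exploiting the Lipschitz continuity of $\kappa$ and $d_i$, and appeal to Savar\'e/Jerison--Kenig-type fractional regularity for (ii). The two approaches agree in structure: both hinge on Lax--Milgram (in spirit) plus an appeal to elliptic regularity theory for Lipschitz domains for part~(ii), and your identification of the restriction $s\in(-\tfrac12,\tfrac12)$ as a consequence of the Lipschitz geometry of $\partial Y_0$ matches the paper's discussion. What your proof buys is a transparent reduction of (i) to elementary interior estimates rather than a black-box citation; what the paper's reference-based treatment buys is brevity and a statement that extends smoothly to corner singularities, which is what motivates their emphasis on non-convexity.

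One small discrepancy worth flagging: the paper's discussion speaks of $H^2_{loc}(\bar Y_1)$ (i.e.\ locally up to the boundary away from geometric singularities), while the statement of Theorem~\ref{thm:cell} and your difference-quotient argument give interior regularity $H^2_{loc}(Y_1)$ only. If the stronger conclusion up to smooth parts of the boundary is actually intended, your proof of (i) would need to be supplemented with a flattening-and-reflection argument near $\partial Y_0\setminus\{\text{corners}\}$ combined with tangential difference quotients; as literally stated in the theorem, however, your interior argument suffices. Also note that your compatibility identity should carry a relative sign between the volume and surface terms depending on whether $\mathrm{n}$ denotes the outward normal to $Y_1$ or to $Y_0$; this does not affect the conclusion (the condition holds automatically by the divergence theorem and periodicity on $\partial Y$), but the displayed formula should be written consistently with the paper's convention that $\mathrm{n}$ is outward to the pore space.
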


The weak solvability of the cell problems \eqref{eq:cell1} and \eqref{eq:cell2} shall be further discussed in the proof of our main result -- Theorem \ref{mainthm:1}. To derive our corrector estimates, we need a number of elementary inequalities.
\begin{itemize}
	\item For all $1\le p\le\infty$,
	the following estimates hold:
	\begin{equation}
	\left\Vert \nabla^{\delta}f\cdot g\right\Vert _{L^{p}\left(\Omega^{\varepsilon}\right)}\le C_{\delta}\left\Vert f\right\Vert _{L^{\infty}\left(\Omega^{\varepsilon}\right)}\left\Vert g\right\Vert _{\left[L^{p}\left(\Omega^{\varepsilon}\right)\right]^{d}}\quad\mbox{for}\;f\in L^{\infty}\left(\Omega^{\varepsilon}\right),g\in\left[L^{p}\left(\Omega^{\varepsilon}\right)\right]^{d},
	\label{thm:moliinequal-2}
	\end{equation}
	\begin{equation}
	\left\Vert \nabla^{\delta}f\right\Vert _{L^{p}\left(\Omega^{\varepsilon}\right)}\le C_{\delta}\left\Vert f\right\Vert _{L^{2}\left(\Omega^{\varepsilon}\right)}\quad\mbox{for}\;f\in L^{2}\left(\Omega^{\varepsilon}\right),
	\label{thm:moliinequal}
	\end{equation}
	where $C>0$ depends only on $\delta$. See \cite{KAM14}, e.g., for a proof of \eqref{thm:moliinequal-2} and \eqref{thm:moliinequal}.
	
	\item To estimate the correctors for both the temperature $\theta^{\varepsilon}$ and colloidal concentrations $u_i^{\varepsilon}$, we consider the real-valued cut-off function $m^{\varepsilon}\in C_{0}^{1}\left(\Omega\right)$
	satisfying $0\le m^{\varepsilon}\le1$, $\varepsilon\left|\nabla m^{\varepsilon}\right|\le C$,
	and $m^{\varepsilon}=1$ on $\left\{ x\in\Omega:\mbox{dist}\left(x,\Gamma\right)\ge\varepsilon\right\} $.
	Furthermore, one can prove that
	\begin{equation}
	\left\Vert 1-m^{\varepsilon}\right\Vert _{L^{2}\left(\Omega^{\varepsilon}\right)}\le C\varepsilon^{1/2},\quad\varepsilon\left\Vert \nabla m^{\varepsilon}\right\Vert _{L^{2}\left(\Omega^{\varepsilon}\right)}\le C\varepsilon^{1/2}.
	\label{rem:cut-off}
	\end{equation}
	
	\item (A Young-type inequality) Let $\delta>0$ and $a,b\ge0$ be arbitrarily
	real numbers and take $q,q'>1$ real constants that are H\"older
	conjugates of each other. Then the following inequality holds
	\begin{equation}
	ab\le\frac{1}{q}\delta^{q}a^{q}+\frac{1}{q'}\delta^{-q'}b^{q'}.
	\label{eq:young}
	\end{equation}
	
	\item (Trace inequality for $\varepsilon$-dependent hypersurfaces $\Gamma^{\varepsilon}$) Let $\Gamma^{\varepsilon}$ be as in Subsection \ref{subsec:geometry}. For $\varphi^{\varepsilon}\in H^1 (\Omega^{\varepsilon})$, there exists a constant $C>0$ (independent of $\varepsilon$) such that
	\begin{equation}
	\varepsilon\left\Vert \varphi^{\varepsilon}\right\Vert _{L^{2}\left(\Gamma^{\varepsilon}\right)}^{2}\le C\left(\left\Vert \varphi^{\varepsilon}\right\Vert _{L^{2}\left(\Omega^{\varepsilon}\right)}^{2}+\varepsilon^{2}\left\Vert \nabla\varphi^{\varepsilon}\right\Vert _{L^{2}\left(\Omega^{\varepsilon}\right)}^{2}\right).
	\label{eq:tracein}
	\end{equation}
	The proof of \eqref{eq:tracein} can be found in \cite[Lemma 3]{HJ91}.
\end{itemize}

\begin{theorem}
	\emph{\textbf{Existence and uniqueness results for $(P^{0})$}}
	Assume $\left(\mbox{A}_{1}\right)$-$\left(\mbox{A}_{2}\right)$.
	For $i\in\left\{ 1,...,N\right\} $, the macroscopic problem $\left(P^{0}\right)$ admits a unique
	(local) weak solution in $L^{2}\left(\left(0,T\right)\times\Omega\right)$.
\end{theorem}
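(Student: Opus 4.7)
The plan is to establish local well-posedness of $(P^{0})$ via a Banach fixed-point argument applied to a reduced version of the system. First, I would eliminate $v_{i}^{0}$ from the system by solving the ordinary differential equation \eqref{eq:2.14} explicitly in terms of $u_{i}^{0}$ via Duhamel's formula
\[
v_{i}^{0}(t,x)=v_{i}^{0,0}(x)e^{-B_{i}t}+A_{i}\int_{0}^{t}e^{-B_{i}(t-s)}u_{i}^{0}(s,x)\,ds,
\]
which reduces the system \eqref{eq:strongmac1}-\eqref{eq:2.14} to a pair of semi-linear parabolic equations for $(\theta^{0},u_{i}^{0})$ carrying a non-local memory term in place of $v_{i}^{0}$. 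The cell problems \eqref{eq:cell1}-\eqref{eq:cell2} together with assumption $\left(\mbox{A}_{1}\right)$ ensure by the standard variational characterization that the effective tensors $\mathbb{K}$ and $\mathbb{D}^{i}$ defined in \eqref{matrix1} and \eqref{matrix3} are symmetric and uniformly positive definite, so the principal parts of \eqref{eq:strongmac1}-\eqref{eq:strongmac2} are uniformly elliptic on $\Omega$.

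Next, for $T^{*}\in(0,T]$ and $R>0$ chosen in terms of the $L^{\infty}$-norms of the initial data, I would consider the closed ball
\[
X_{T^{*}}:=\left\{ (\tilde{\theta},\tilde{u}_{1},\ldots,\tilde{u}_{N})\in L^{\infty}(0,T^{*};L^{2}(\Omega))^{N+1}:\|\cdot\|_{L^{\infty}_{t}L^{2}_{x}}\le R\right\} ,
\]
and define a map $\mathcal{T}$ on $X_{T^{*}}$ as follows: given $(\tilde{\theta},\tilde{u})\in X_{T^{*}}$, first construct $\tilde{v}_{i}$ by the Duhamel formula above with $\tilde{u}_{i}$ in place of $u_{i}^{0}$, then let $(\theta,u)=\mathcal{T}(\tilde{\theta},\tilde{u})$ be the unique weak solution of the decoupled linear parabolic system obtained from \eqref{eq:weak2} by freezing the mollified gradients $\nabla^{\delta}\tilde{u}_{i}$ and $\nabla^{\delta}\tilde{\theta}$ as prescribed drift velocities, freezing $\tilde{v}_{i}$ in the deposition term, and freezing $R_{i}(\tilde{u})$ as a prescribed source. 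Unique solvability of this linear problem follows from classical parabolic theory (Lions-Magenes), since the inequalities \eqref{thm:moliinequal-2}-\eqref{thm:moliinequal} guarantee that the drift coefficients lie in $L^{\infty}(0,T^{*};[L^{\infty}(\Omega)]^{d})$ with bounds depending on $\delta$ and $R$, while $R_{i}(\tilde{u})\in L^{\infty}(0,T^{*};L^{2}(\Omega))$ by the bound on $\tilde{u}$.

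To conclude, I would show $\mathcal{T}$ maps $X_{T^{*}}$ into itself and is a contraction by subtracting the equations satisfied by $\mathcal{T}(\tilde{\theta}^{(1)},\tilde{u}^{(1)})$ and $\mathcal{T}(\tilde{\theta}^{(2)},\tilde{u}^{(2)})$, testing the differences against themselves, and integrating in time. Uniform ellipticity of $\mathbb{K}$ and $\mathbb{D}^{i}$ produces a coercive lower bound on the principal part; the transport, reaction, and memory contributions are controlled via \eqref{thm:moliinequal-2}-\eqref{thm:moliinequal}, Young's inequality \eqref{eq:young} (used to absorb $\|\nabla(\cdot)\|_{L^{2}}^{2}$ into the ellipticity term), and Lipschitz continuity of $R_{i}$ restricted to the ball of radius $R$. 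Gronwall's inequality then yields a contraction constant of the form $C(R,\delta)T^{*}$, which is strictly less than one once $T^{*}$ is chosen sufficiently small; the Banach fixed-point theorem delivers the unique local weak solution. The additional regularity $\theta^{0},u_{i}^{0}\in H^{1}(0,T^{*};L^{2}(\Omega))\cap L^{2}(0,T^{*};H^{1}(\Omega))$ and the $L^{\infty}$-bounds asserted in Theorem \ref{thm:weakmacro} follow from standard energy estimates and maximum-principle arguments applied to the fixed point, entirely analogous to those recorded in Theorem \ref{thm:wellposed} for $(P^{\varepsilon})$.

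The main technical obstacle I expect is to control the Soret-Dufour transport terms, namely $(\mathbb{T}^{i}\nabla^{\delta}\tilde{u}_{i})\cdot\nabla\theta$ and $(\mathbb{F}^{i}\nabla u_{i})\cdot\nabla^{\delta}\tilde{\theta}$, in the contraction estimate without losing control of the gradient factors. This is precisely where the mollification $\nabla^{\delta}$ is decisive: estimates \eqref{thm:moliinequal-2}-\eqref{thm:moliinequal} trade an $L^{2}$-norm of the unmollified argument for an $L^{\infty}$-norm of its mollified gradient, at the price of a $\delta$-dependent constant, while the surviving $L^{2}$-norm of the true gradient is absorbed into the elliptic coercivity term through \eqref{eq:young}. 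A secondary difficulty is the quadratic nature of the Smoluchowski operator $R_{i}$, which is only locally Lipschitz on $\mathbb{R}^{N}$; this is why the fixed-point must be carried out on a bounded ball $X_{T^{*}}$ and why the resulting solution is only local in time, in accordance with the statement of the theorem.
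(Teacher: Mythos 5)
Your overall strategy is sound and close in spirit to the paper's, but two things differ and one of them hides a real gap.

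First, the paper does not re-establish existence from scratch: it imports it from the two-scale homogenization result \cite[Lemma 4.3]{KAM14}, since the macroscopic triplet $\left(\theta^{0},u_{i}^{0},v_{i}^{0}\right)$ is obtained there as the limit of the microscopic solutions. What remains to be proved is uniqueness, and that is done precisely by a Gronwall-type contraction of differences in a ball, using the mollifier estimates \eqref{thm:moliinequal-2}--\eqref{thm:moliinequal} and the Young inequality \eqref{eq:young} to absorb the gradient norms --- the same mechanism you invoke. Your Picard iteration with Duhamel elimination of $v_{i}^{0}$ is a legitimate alternative that would furnish both existence and uniqueness in a self-contained way, at the cost of having to run the full construction rather than leaning on the two-scale limit.

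Second, and this is the gap: you set up the fixed point in the ball $X_{T^{*}}\subset L^{\infty}\left(0,T^{*};L^{2}\left(\Omega\right)\right)^{N+1}$ defined by an $L^{\infty}_{t}L^{2}_{x}$ bound of radius $R$. That is not enough to make the Smoluchowski part work. Since $R_{i}$ is quadratic, the bound $\|\tilde{u}\|_{L^{\infty}_{t}L^{2}_{x}}\le R$ gives you only $R_{i}\left(\tilde{u}\right)\in L^{\infty}\left(0,T^{*};L^{1}\left(\Omega\right)\right)$, not $L^{\infty}\left(0,T^{*};L^{2}\left(\Omega\right)\right)$ as you claim, and the Lipschitz estimate
\[
\left\Vert R_{i}\left(u^{1}\right)-R_{i}\left(u^{2}\right)\right\Vert _{L^{2}\left(\Omega\right)}\le C\left(\left\Vert u^{1}\right\Vert _{L^{\infty}\left(\Omega\right)},\left\Vert u^{2}\right\Vert _{L^{\infty}\left(\Omega\right)}\right)\sum_{j}\left\Vert u_{j}^{1}-u_{j}^{2}\right\Vert _{L^{2}\left(\Omega\right)}
\]
requires pointwise bounds on $u^{1},u^{2}$, which an $L^{2}$-ball does not supply. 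The paper avoids this by defining the working set as
\[
K_{1}\left(M,T\right)=\left\{ z\in L^{2}\left(\left(0,T\right)\times\Omega\right):\left|z\right|\le M\;\text{a.e. in}\;Q_{T}\right\} ,
\]
i.e.\ an $L^{\infty}$-pointwise ball sitting inside $L^{2}$, so that the Lipschitz constant for $R_{i}$ is controlled by $M$. To repair your argument you should replace $X_{T^{*}}$ by such a set, and --- crucially --- you must establish the invariance $\mathcal{T}\left(K_{1}\left(M,T^{*}\right)\right)\subset K_{1}\left(M,T^{*}\right)$ \emph{inside} the iteration, by proving an $L^{\infty}$ (maximum-principle type) estimate for each linear step. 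Deriving the $L^{\infty}$ bound only \emph{a posteriori} for the fixed point, as you suggest in the last part of the proposal, is too late: without invariance of the $L^{\infty}$ ball the map is not well defined on the space where the contraction is to be proved.

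A minor remark: your Duhamel elimination of $v_{i}^{0}$ is clean and correct, but the paper keeps $d\left(v_{i}^{0}\right)$ as a separate unknown with its own trivial ODE in the difference system, which is just as effective and avoids the nonlocal memory term. Your claim that $\mathbb{K},\mathbb{D}^{i}$ are uniformly elliptic is true and standard; the paper uses this implicitly without stating it.
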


\begin{proof}
	Due to the homogenization limit results in \cite[Lemma 4.3]{KAM14}, the existence of the triplet $\left(\theta^{0},u_{i}^{0},v_{i}^{0}\right)$ in Theorem \ref{thm:weakmacro} is guaranteed. The contraction of these functions in a closed subspace of  $[L^2((0,T)\times \Omega)]^{N+2}$ can be proved concisely by a linearization argument. The proof can be sketched as follows: We define
	\[
	K_{1}\left(M,T\right):=\left\{ z\in L^{2}\left(\left(0,T\right)\times\Omega\right):\left|z\right|\le M\;\text{a.e. in}\;Q_{T}\right\} .
	\]
	For $i\in\left\{ 1,...,N\right\} $, let $\theta^{0,1},u_{i}^{0,1},v_{i}^{0,1}\in K_{1}\left(M_{1},T_{1}\right)$
	and $\theta^{0,2},u_{i}^{0,2},v_{i}^{0,2}\in K_{1}\left(M_{2},T_{2}\right)$
	be two pairs of (weak) solutions of the macro system. By choosing $T=\min\left\{ T_{1},T_{2}\right\} $
	and $M=2\max\left\{ M_{1},M_{2}\right\} $ and suitable test functions $\varphi,\phi_i,\psi_i$ in \eqref{eq:weak2}, we get $d\left(\theta^{0}\right):=\theta^{0,1}-\theta^{0,2},d\left(u_{i}^{0}\right):=u_{i}^{0,1}-u_{i}^{0,2},d\left(v_{i}^{0}\right):=v_{i}^{0,1}-v_{i}^{0,2}\in K_{1}\left(M,T\right)$,
	which satisfy the following equalities:
	\begin{align}
	\frac{1}{2}\partial_{t}\left\Vert d\left(\theta^{0}\right)\right\Vert _{L^{2}\left(\Omega\right)}^{2}+\mathbb{K}\left\Vert \nabla d\left(\theta^{0}\right)\right\Vert _{L^{2}\left(\Omega\right)}^{2} & +g_{0}\frac{\left|\Gamma_{R}\right|}{\left|Y_{1}\right|}\left\Vert d\left(\theta^{0}\right)\right\Vert _{L^{2}\left(\Omega\right)}^{2}\nonumber \\
	= & \int_{\Omega}\sum_{i=1}^{N}\left(\left(\mathbb{T}^{i}\nabla^{\delta}u_{i}^{0,1}\right)\cdot\nabla\theta^{0,1}-\left(\mathbb{T}^{i}\nabla^{\delta}u_{i}^{0,2}\right)\cdot\nabla\theta^{0,2}\right)d\left(\theta^{0}\right)dx,\label{eq:0.1}
	\end{align}
	\begin{align}
	\frac{1}{2}\partial_{t}\left\Vert d\left(u_{i}^{0}\right)\right\Vert _{L^{2}\left(\Omega\right)}^{2}+\mathbb{D}^{i}\left\Vert \nabla d\left(u_{i}^{0}\right)\right\Vert _{L^{2}\left(\Omega\right)}^{2} & +A_{i}\left\Vert d\left(u_{i}^{0}\right)\right\Vert _{L^{2}\left(\Omega\right)}^{2}-\int_{\Omega}B_{i}d\left(v_{i}^{0}\right)d\left(u_{i}^{0}\right)dx\nonumber \\
	& =\int_{\Omega}\left(\left(\mathbb{F}^{i}\nabla u_{i}^{0,1}\right)\cdot\nabla^{\delta}\theta^{0,1}-\left(\mathbb{F}^{i}\nabla u_{i}^{0,2}\right)\cdot\nabla^{\delta}\theta^{0,2}\right)d\left(u_{i}^{0}\right)dx\nonumber \\
	& +\int_{\Omega}\left(R_{i}\left(u_{i}^{0,1}\right)-R_{i}\left(u_{i}^{0,2}\right)\right)d\left(u_{i}^{0}\right)dx,\label{eq:0.2}
	\end{align}
	\[
	\frac{1}{2}\partial_{t}\left\Vert d\left(v_{i}^{0}\right)\right\Vert _{L^{2}\left(\Omega\right)}^{2}+B_{i}\left\Vert d\left(v_{i}^{0}\right)\right\Vert _{L^{2}\left(\Omega\right)}^{2}=\int_{\Omega}A_{i}d\left(u_{i}^{0}\right)d\left(v_{i}^{0}\right)dx.
	\]
	Then, with the help of the estimates \eqref{thm:moliinequal-2}-\eqref{thm:moliinequal} and the Young-type inequality \eqref{eq:young} under
	a suitable choice of a pair $\left(\delta,q,q'\right)$ to get rid
	of the gradient norms $\left\Vert \nabla d\left(\theta^{0}\right)\right\Vert _{L^{2}\left(\Omega\right)}^{2}$
	and $\left\Vert \nabla d\left(u_{i}^{0}\right)\right\Vert _{L^{2}\left(\Omega\right)}^{2}$
	on the left-hand side of (\ref{eq:0.1})-(\ref{eq:0.2}), one can
	find a constant $C\left(M.\delta\right)>0$ such that for all $i\in\left\{ 1,...,N\right\} $
	\begin{align}
	\partial_{t}\left\Vert d\left(\theta^{0}\right)\right\Vert _{L^{2}\left(\Omega\right)}^{2} & +\partial_{t}\left\Vert d\left(u_{i}^{0}\right)\right\Vert _{L^{2}\left(\Omega\right)}^{2} +\partial_{t}\left\Vert d\left(v_{i}^{0}\right)\right\Vert _{L^{2}\left(\Omega\right)}^{2}\nonumber \\
	&\le C\left(M,\delta\right)\left(\left\Vert d\left(\theta^{0}\right)\right\Vert _{L^{2}\left(\Omega\right)}^{2}+\left\Vert d\left(u_{i}^{0}\right)\right\Vert _{L^{2}\left(\Omega\right)}^{2}+\left\Vert d\left(v_{i}^{0}\right)\right\Vert _{L^{2}\left(\Omega\right)}^{2}+1\right).\label{eq:0.3}
	\end{align}
	
	Hereby, we apply the Gronwall inequality to (\ref{eq:0.3}) and then
	integrate the resulting estimate over $\left(0,T\right)$ to obtain
	that
	\begin{equation}
	\left\Vert d\left(\theta^{0}\right)\right\Vert _{L^{2}\left(\left(0,T\right)\times\Omega\right)}^{2}+\left\Vert d\left(u_{i}^{0}\right)\right\Vert _{L^{2}\left(\left(0,T\right)\times\Omega\right)}^{2}+\left\Vert d\left(v_{i}^{0}\right)\right\Vert _{L^{2}\left(\left(0,T\right)\times\Omega\right)}^{2}\le T^{2}C\left(M,\delta\right)\text{exp}\left(TC\left(M,\delta\right)\right).\label{eq:0.4}
	\end{equation}
	
	Since $T^{2}C\left(M,\delta\right)\text{exp}\left(TC\left(M,\delta\right)\right)\to0$
	as $T\to0$, we can construct an approximation scheme $\left(\theta^{0,n},u_{i}^{0,n},v_{i}^{0,n}\right)$
	for $n\in\mathbb{N}$ for the macro system in which the involved nonlinear
	terms are linearized. With a small enough $T_{0}$ such that $T_{0}^{2}C\left(M,\delta\right)\text{exp}\left(T_{0}C\left(M,\delta\right)\right)<1$,
	we claim that $\left\{ \theta^{0,n}\right\} _{n\in\mathbb{N}},$ $\left\{ u_{i}^{0,n}\right\} _{n\in\mathbb{N}}$ and $\left\{ v_{i}^{0,n}\right\} _{n\in\mathbb{N}}$
	are the Cauchy sequences in $K_{1}\left(M,T_{0}\right)$
	by (\ref{eq:0.4}). Thus, the local existence and uniqueness of solutions in $[L^2((0,T)\times \Omega)]^{N+2}$ 
	to ($P^0$) is guaranteed.
\end{proof}

\section{Main result}\label{sec:3}
The main result of this paper is stated in the next Theorem whose applicability is delimited by the assumptions $\left(\text{A}_{1}\right)$-$\left(\text{A}_{2}\right)$ and the extra regularity assumptions shall also be provided therein. Note that the involved macro reconstructions $\theta^{\varepsilon}_{0},u_{i,0}^{\varepsilon},v_{i,0}^{\varepsilon}$ for $i\in \left\{1,...,N\right\}$ shall be defined right in the next Subsection. 
\begin{theorem}\label{mainthm:1}
	Assume $\left(\text{A}_{1}\right)$-$\left(\text{A}_{2}\right)$. Let $\left(\theta^{\varepsilon},u_{i}^{\varepsilon},v_{i}^{\varepsilon}\right)$
	and $\left(\theta^{0},u_{i}^{0},v_{i}^{0}\right)$ for $i\in\left\{ 1,...,N\right\} $
	be weak solutions to $(P^{\varepsilon})$ and $(P^0)$ in the sense of Definition \ref{def:mathbb=00007BPeps=00007D} and Theorem \ref{thm:weakmacro}, respectively.
	Let $\bar{\theta},\bar{u}_i$ be the cell functions solving the cell problems \eqref{eq:cell1}-\eqref{eq:cell2} and satisfying
	\[
	\bar{\theta},\bar{u}_{i}\in L^{\infty}\left(\Omega^{\varepsilon};W_{\#}^{1+s,2}\left(Y_{1}\right)\right)\cap H^{1}\left(\Omega^{\varepsilon};W_{\#}^{s,2}\left(Y_{1}\right)\right)\quad\text{for}\;s>d/2.
	\]
	For every $t\in (0,T]$, we also assume that
	$\theta^{0}\left(t,\cdot\right),u_{i}^{0}\left(t,\cdot\right)\in W^{1,\infty}\left(\Omega^{\varepsilon}\right)\cap H^{2}\left(\Omega^{\varepsilon}\right)$ for $i\in\left\{ 1,...,N\right\} $. On top of that, we assume the initial homogenization limit is of the rate
	\[
	\left\Vert \theta^{\varepsilon,0}-\theta^{0,0}\right\Vert _{L^{2}\left(\Omega^{\varepsilon}\right)}^{2}+\sum_{i=1}^{N}\left\Vert u_{i}^{\varepsilon,0}-u_{i}^{0,0}\right\Vert _{L^{2}\left(\Omega^{\varepsilon}\right)}^{2}+\sum_{i=1}^{N}\left\Vert v_{i}^{\varepsilon,0}-v_{i}^{0,0}\right\Vert _{L^{2}\left(\Gamma^{\varepsilon}\right)}^{2}\le\varepsilon^{\gamma},
	\]
	for some $\gamma\in\mathbb{R}_{+}$. Then the following corrector estimate holds\\
	
	$\displaystyle{
		\left\Vert \theta^{\varepsilon}-\theta^{0}\right\Vert _{L^{2}\left((0,T)\times\Omega^{\varepsilon}\right)}^{2}+\sum_{i=1}^{N}\left\Vert u_{i}^{\varepsilon}-u_{i}^{0}\right\Vert _{L^{2}\left((0,T)\times\Omega^{\varepsilon}\right)}^{2}
	}
	$
	
	$\displaystyle{
		+\left\Vert \nabla\left(\theta^{\varepsilon}-\theta_{1}^{\varepsilon}\right)\right\Vert _{L^{2}\left(0,T;\left[L^{2}\left(\Omega^{\varepsilon}\right)\right]^{d}\right)}^{2}
		+\sum_{i=1}^{N}\left\Vert \nabla\left(u_{i}^{\varepsilon}-u_{i,1}^{\varepsilon}\right)\right\Vert _{L^{2}\left(0,T;\left[L^{2}\left(\Omega^{\varepsilon}\right)\right]^{d}\right)}^{2}\le C\max\left\{ \varepsilon,\varepsilon^{\gamma}\right\},	
	}	
	$\\	
	where $C$ is a generic positive constant that is independent of $\varepsilon$.
	
	Furthermore, if $\gamma \ge 1$, then we obtain
	\[
	\varepsilon\sum_{i=1}^{N}\left\Vert v_{i}^{\varepsilon}-v_{i}^{0}\right\Vert _{L^{2}\left((0,T)\times\Gamma^{\varepsilon}\right)}^{2}\le C \varepsilon.
	\]
\end{theorem}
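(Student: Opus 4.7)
The plan is to adapt the energy-method-plus-macroscopic-reconstruction strategy outlined in the introduction: form reconstructed fields that approximately satisfy the microscopic equations, subtract them from the actual micro-solutions, and test the resulting difference system with the differences themselves to extract an $\varepsilon$-quantified bound through Gronwall's inequality.

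First I would write the first-order reconstructions explicitly as
\[
\theta_{1}^{\varepsilon}(t,x):=\theta^{0}(t,x)+\varepsilon\, m^{\varepsilon}(x)\sum_{j=1}^{d}\partial_{x_{j}}\theta^{0}(t,x)\,\bar{\theta}^{j}\!\left(x/\varepsilon\right),
\]
and analogously for $u_{i,1}^{\varepsilon}$, together with zeroth-order reconstructions $\theta_{0}^{\varepsilon}:=\theta^{0}$, $u_{i,0}^{\varepsilon}:=u_{i}^{0}$ restricted to $\Omega^{\varepsilon}$ and $v_{i,0}^{\varepsilon}:=v_{i}^{0}$ traced to $\Gamma^{\varepsilon}$. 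Plugging $\theta_{1}^{\varepsilon}$ and $u_{i,1}^{\varepsilon}$ into the operators $\mathcal{A}^{\varepsilon}_{\kappa},\mathcal{A}^{\varepsilon}_{d_{i}}$ and invoking the cell problems \eqref{eq:cell1}--\eqref{eq:cell2} cancels the leading singular $O(\varepsilon^{-1})$ contributions; the remaining residuals are controllable in $L^{2}((0,T)\times\Omega^{\varepsilon})$ by $C\varepsilon$ upon using the regularity $\theta^{0}(t,\cdot),u_{i}^{0}(t,\cdot)\in H^{2}\cap W^{1,\infty}$, the Sobolev embedding $W^{1+s,2}(Y_{1})\hookrightarrow C^{0}(\overline{Y}_{1})$ for $s>d/2$ applied to $\bar{\theta},\bar{u}_{i}$, and the boundary-layer cut-off estimates \eqref{rem:cut-off}.

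Next I would subtract the reconstructed equations from the weak formulation \eqref{eq:weak1} and take the differences $\Phi^{\varepsilon}:=\theta^{\varepsilon}-\theta_{1}^{\varepsilon}$, $U_{i}^{\varepsilon}:=u_{i}^{\varepsilon}-u_{i,1}^{\varepsilon}$, and $V_{i}^{\varepsilon}:=v_{i}^{\varepsilon}-v_{i}^{0}$ as test functions. Coercivity from $(\text{A}_{1})$ yields
\[
\frac{1}{2}\frac{d}{dt}\Bigl(\|\Phi^{\varepsilon}\|_{L^{2}(\Omega^{\varepsilon})}^{2}+\sum_{i=1}^{N}\|U_{i}^{\varepsilon}\|_{L^{2}(\Omega^{\varepsilon})}^{2}\Bigr)+\alpha_{0}\|\nabla\Phi^{\varepsilon}\|_{L^{2}(\Omega^{\varepsilon})}^{2}+\sum_{i=1}^{N}\alpha_{i}\|\nabla U_{i}^{\varepsilon}\|_{L^{2}(\Omega^{\varepsilon})}^{2}\le\mathrm{RHS}.
\]
The right-hand side splits into four families of contributions: (i) the $O(\varepsilon)$ reconstruction residuals above; (ii) the Soret--Dufour couplings involving $\nabla^{\delta}$ of differences of $\theta$'s and $u_{i}$'s, which I would decompose via $a^{\varepsilon}b^{\varepsilon}-a^{0}b^{0}=(a^{\varepsilon}-a^{0})b^{\varepsilon}+a^{0}(b^{\varepsilon}-b^{0})$ and absorb using \eqref{thm:moliinequal-2}--\eqref{thm:moliinequal} together with the $L^{\infty}$ bounds of Theorem \ref{thm:wellposed}; (iii) the Smoluchowski residual $R_{i}(u^{\varepsilon})-R_{i}(u^{0})$, handled by the local Lipschitz continuity of $R_{i}$ on the bounded set $K(T,M)$; and (iv) the micro-surface integrals of $a_{i}^{\varepsilon}u_{i}^{\varepsilon}-b_{i}^{\varepsilon}v_{i}^{\varepsilon}$ on $\Gamma^{\varepsilon}$. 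These last contributions are the main technical difficulty; I would treat them by adding and subtracting $A_{i}u_{i}^{0}-B_{i}v_{i}^{0}$, controlling the diagonal difference via the trace inequality \eqref{eq:tracein}, and handling the $O(\varepsilon)$ defect coming from the replacement of $a_{i}^{\varepsilon},b_{i}^{\varepsilon}$ by their effective averages via the $L^{\infty}$ regularity assumed on $\theta^{0},u_{i}^{0},v_{i}^{0}$ and the Young inequality \eqref{eq:young} tuned so that the resulting gradient-squared terms are absorbed into the left-hand side.

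Closing the estimate is then routine: Gronwall's inequality applied in time yields the first displayed bound at rate $C\max\{\varepsilon,\varepsilon^{\gamma}\}$, where the initial cost $\|\Phi^{\varepsilon}(0)\|_{L^{2}(\Omega^{\varepsilon})}^{2}+\sum_{i}\|U_{i}^{\varepsilon}(0)\|_{L^{2}(\Omega^{\varepsilon})}^{2}+\sum_{i}\|V_{i}^{\varepsilon}(0)\|_{L^{2}(\Gamma^{\varepsilon})}^{2}$ is bounded by $C\varepsilon^{\gamma}$ from the hypothesis plus an $O(\varepsilon^{2})$ term generated by $\|\varepsilon m^{\varepsilon}\bar{\theta}\cdot\nabla\theta^{0,0}\|_{L^{2}(\Omega^{\varepsilon})}^{2}$ and its analogue for $u_{i}^{0,0}$. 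Finally, for the surface estimate under the assumption $\gamma\ge 1$, I would subtract the ODE \eqref{eq:odedeposit} from \eqref{eq:2.14} (read on $\Gamma^{\varepsilon}$ via the trace of $u_{i}^{0},v_{i}^{0}$), multiply by $V_{i}^{\varepsilon}$, integrate $\varepsilon$-scaled over $\Gamma^{\varepsilon}$, and apply Gronwall once more; the driving term $(a_{i}^{\varepsilon}-A_{i})u_{i}^{\varepsilon}-(b_{i}^{\varepsilon}-B_{i})v_{i}^{\varepsilon}$ is converted into bulk norms of $U_{i}^{\varepsilon}$ via \eqref{eq:tracein}, delivering the claimed $C\varepsilon$ rate. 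The principal obstacle I anticipate is precisely the simultaneous balancing of the nonlinear $\nabla^{\delta}$-couplings and the surface reaction terms, because each demands a different calibration of the Young exponents and of the cut-off localization $m^{\varepsilon}$ in order to keep every gradient-squared absorption consistent on the left-hand side.
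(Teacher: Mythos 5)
Your proposal follows the same general template as the paper: macroscopic reconstruction, subtraction, energy estimate with the difference as test function, Gronwall. However, there is a genuine gap at the very step you dispose of in a single sentence. You assert that after inserting $\theta_1^{\varepsilon}, u_{i,1}^{\varepsilon}$ into $\mathcal{A}^{\varepsilon}_{\kappa}, \mathcal{A}^{\varepsilon}_{d_i}$ and invoking the cell problems, ``the remaining residuals are controllable in $L^2((0,T)\times\Omega^{\varepsilon})$ by $C\varepsilon$'' using only the regularity of $\theta^0, u_i^0$, the Sobolev embedding, and the cut-off estimates. This is false as stated. After cancellation of the $O(\varepsilon^{-1})$ term, what remains in the flux difference is
\[
\kappa^{\varepsilon}\nabla\theta_1^{\varepsilon}-\mathbb{K}\nabla\theta^0
=\bigl[\kappa^{\varepsilon}(\mathbb{I}+(\nabla_y\bar\theta)^{\varepsilon})-\mathbb{K}\bigr]\nabla\theta^0+O(\varepsilon)
=:\mathcal{G}^{\varepsilon}\nabla\theta^0+O(\varepsilon),
\]
and $\mathcal{G}^{\varepsilon}$ is an oscillating matrix field of size $O(1)$ in $L^2(\Omega^{\varepsilon})$; it is not small pointwise, only weakly. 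The $L^2$-estimate of Lemma \ref{thm:pep} would give at best $O(\varepsilon^{1/2})$, not $O(\varepsilon)$, which would destroy the claimed rate. The paper's proof extracts the full order $\varepsilon$ precisely by exploiting that $\mathcal{G}$ is divergence-free in $y$ with vanishing $Y_1$-average, constructing a skew-symmetric vector potential $\mathbf{V}$ with $\mathcal{G}=\nabla_y\mathbf{V}$, and using the commutation $\nabla_y=\varepsilon\nabla-\varepsilon\nabla_x$ to write
\[
\mathcal{G}^{\varepsilon}\nabla\theta^0
=\varepsilon\nabla\!\cdot(\mathbf{V}^{\varepsilon}\nabla\theta^0)
-\varepsilon\mathbf{V}^{\varepsilon}\Delta\theta^0
-\varepsilon(\nabla_x\mathbf{V})^{\varepsilon}\nabla\theta^0,
\]
where the first piece is annihilated by skew-symmetry when paired against $\nabla\varphi$ and the remaining pieces carry the explicit $\varepsilon$. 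This ``corrector-of-the-flux'' device, or some equivalent compensated-compactness rate argument, is the mathematical core of the theorem, and your proposal does not contain it.

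A secondary gap concerns the reconciliation of boundary integrals with bulk integrals. The Robin-term discrepancy $\varepsilon\int_{\Gamma_R^{\varepsilon}}g_0\theta^{\varepsilon}\varphi\,dS_{\varepsilon}-g_0\tfrac{|\Gamma_R|}{|Y_1|}\int_{\Omega^{\varepsilon}}\theta^0\varphi\,dx$ (and its analogue for the deposition terms $\mathcal{J}_3^i$) cannot be handled merely by ``adding and subtracting $A_iu_i^0-B_iv_i^0$'' and the trace inequality; one of the two terms lives on $\Gamma^{\varepsilon}$ and the other on $\Omega^{\varepsilon}$, so they cannot be combined by a trace argument alone. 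The paper resolves this by the compatibility Lemma \ref{thm:comp}, which solves an auxiliary Neumann problem in the unit cell to represent the bulk integral as a surface integral plus an $O(\varepsilon)\|\varphi\|_{H^1(\Omega^{\varepsilon})}$ error. Your sketch needs a corresponding step, otherwise the energy inequality you set up cannot be closed.

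Finally, a smaller inconsistency: you fold the cut-off $m^{\varepsilon}$ into the definition of $\theta_1^{\varepsilon}$, whereas the theorem statement's gradient corrector $\nabla(\theta^{\varepsilon}-\theta_1^{\varepsilon})$ uses the \emph{un-cut-off} first-order reconstruction. The paper keeps these distinct, with the cut-off only in the test function, and spends a short argument (using \eqref{rem:cut-off} and the $C^1$-regularity of $\bar\theta$ in $y$ coming from $W^{1+s,2}(Y_1)\hookrightarrow C^1(\overline{Y}_1)$) to transfer the bound from the cut-off version to the one in the statement; you would need to do likewise.
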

\subsection{Macroscopic reconstruction}

To derive correctors estimates for our problem, we use the concept of the macroscopic reconstruction. We borrow this terminology from Eck\cite{Eck2}, but note that it is also connected to similar concepts in the \emph{a posteriori} numerical analysis of PDEs (see e.g. \cite{GLV11}). It turns
out that we derive operators that could bring us the link between
the strong formulations $\left(P^{\varepsilon}\right)$ and $\left(P^{0}\right)$. For
a.e. $t\in\left[0,T\right]$ and $x\in\Omega^{\varepsilon}$ we provide
that 
\begin{equation}
\theta^{\varepsilon}_{0}\left(t,x\right):=\theta^{0}\left(t,x\right),
\label{eq:macrore1}
\end{equation}
\begin{equation}
u_{i,0}^{\varepsilon}\left(t,x\right):=u_{i}^{0}\left(t,x\right),
\label{eq:macrore2}
\end{equation}
\begin{equation}
v_{i,0}^{\varepsilon}\left(t,x\right):=v_{i}^{0}\left(t,x\right).
\label{eq:macrore3}
\end{equation}
Henceforward, we obtain the system of macroscopic reconstruction whose
expression is similar to the strong formulations $\left(P^{0}\right)$,
but acting on $x\in\Omega^{\varepsilon}$. We accordingly subtract
this system from the microscopic system $\left(P^{\varepsilon}\right)$
equation-by-equation and gain the difference system over $\Omega^{\varepsilon}$. Then we proceed to the correctors justification
by the following choice of test functions:
\begin{equation}
\varphi\left(t,x\right):=\theta^{\varepsilon}\left(t,x\right)-\left(\theta^{\varepsilon}_{0}\left(t,x\right)+\varepsilon m^{\varepsilon}\left(x\right)\bar{\theta}\left(x,\frac{x}{\varepsilon}\right)\cdot\nabla_{x}\theta^{0}\left(t,x\right)\right),\label{eq:test1}
\end{equation}
\begin{equation}
\phi_{i}\left(t,x\right):=u_{i}^{\varepsilon}\left(t,x\right)-\left(u_{i,0}^{\varepsilon}\left(t,x\right)+\varepsilon m^{\varepsilon}\left(x\right)\bar{u}_{i}\left(x,\frac{x}{\varepsilon}\right)\cdot\nabla_{x}u_{i}^{0}\left(t,x\right)\right),\label{eq:test2}
\end{equation}
where $m_{\varepsilon}$ is a cut-off function with the properties \eqref{rem:cut-off}.

Multiplying the difference system by the test functions $\varphi,\phi_i\in H^1(\Omega^{\varepsilon})$ and integrating the resulting equations
over $\Omega^{\varepsilon}$, we obtain the system, denoted by $\left(\bar{\mathbb{P}}^{\varepsilon}\right)$, as follows:
\[
\begin{array}{c}
{\displaystyle \int_{\Omega^{\varepsilon}_{0}}\partial_{t}\left(\theta^{\varepsilon}-\theta^{\varepsilon}_{0}\right)\varphi dx+\int_{\Omega^{\varepsilon}}\left(\kappa^{\varepsilon}\nabla\theta^{\varepsilon}-\mathbb{K}\nabla\theta^{\varepsilon}_{0}\right)\cdot\nabla\varphi dx+\varepsilon\int_{\Gamma_{R}^{\varepsilon}}g_{0}\theta^{\varepsilon}\varphi dS_{\varepsilon}\quad\quad\quad\quad\quad\quad\quad}\\
{\displaystyle -g_{0}\frac{\left|\Gamma_{R}\right|}{\left|Y_{1}\right|}\int_{\Omega^{\varepsilon}}\theta^{\varepsilon}_{0}\varphi dx=\int_{\Omega^{\varepsilon}}\left(\tau^{\varepsilon}\sum_{i=1}^{N}\nabla^{\delta}u_{i}^{\varepsilon}\cdot \nabla \theta^{\varepsilon}-\sum_{i=1}^{N}\left(\mathbb{T}^{i}\nabla^{\delta}u_{i,0}^{\varepsilon}\right)\cdot\nabla\theta^{\varepsilon}_{0}\right)\varphi dx,}
\end{array}
\]
${\displaystyle {\displaystyle \int_{\Omega^{\varepsilon}}\partial_{t}\left(u_{i}^{\varepsilon}-u_{i,0}^{\varepsilon}\right)\phi_{i}dx+\int_{\Omega^{\varepsilon}}\left(d_{i}^{\varepsilon}\nabla u_{i}^{\varepsilon}-\mathbb{D}^{i}\nabla u_{i,0}^{\varepsilon}\right)\cdot\nabla\phi_{i}dx}+\varepsilon\int_{\Gamma^{\varepsilon}}\left(a_{i}^{\varepsilon}u_{i}^{\varepsilon}-b_{i}^{\varepsilon}v_{i}^{\varepsilon}\right)\phi_{i}dS_{\varepsilon}}$
\begin{eqnarray*}
	-\int_{\Omega^{\varepsilon}}\left(A_{i}u_{i,0}^{\varepsilon}-B_{i}v_{i,0}^{\varepsilon}\right)\phi_{i}dx & = & \int_{\Omega^{\varepsilon}}\left(\rho_{i}^{\varepsilon}\nabla^{\delta}\theta^{\varepsilon}\cdot\nabla u_{i}^{\varepsilon}-\left(\mathbb{F}^{i}\nabla u_{i,0}^{\varepsilon}\right)\cdot\nabla^{\delta}\theta^{\varepsilon}_{0}\right)\phi_{i}dx\\
	&  & +\int_{\Omega^{\varepsilon}}\left(R_{i}\left(u^{\varepsilon}\right)-R_{i}\left(u_{0}^{\varepsilon}\right)\right)\phi_{i}dx,
\end{eqnarray*}

According to the system $\left(\bar{\mathbb{P}}^{\varepsilon}\right)$, we denote the following terms:
\begin{eqnarray}
\mathcal{I}_{1} & := & \int_{\Omega^{\varepsilon}}\partial_{t}\left(\theta^{\varepsilon}-\theta^{\varepsilon}_{0}\right)\varphi dx, \label{eq:differenceterm1}\\
\mathcal{I}_{2} & := & \int_{\Omega^{\varepsilon}}\left(\kappa^{\varepsilon}\nabla\theta^{\varepsilon}-\mathbb{K}\nabla\theta^{\varepsilon}_{0}\right)\cdot\nabla\varphi dx,\\
\mathcal{I}_{3} & := & \varepsilon\int_{\Gamma_{R}^{\varepsilon}}g_{0}\theta^{\varepsilon}\varphi dS_{\varepsilon}-g_{0}\frac{\left|\Gamma_{R}\right|}{\left|Y_{1}\right|}\int_{\Omega^{\varepsilon}}\theta^{\varepsilon}_{0}\varphi dx,\\
\mathcal{I}_{4} & := & \int_{\Omega^{\varepsilon}}\left(\tau^{\varepsilon}\sum_{i=1}^{N}\nabla^{\delta}u_{i}^{\varepsilon}\cdot\nabla\theta^{\varepsilon}-\sum_{i=1}^{N}\left(\mathbb{T}^{i}\nabla^{\delta}u_{i}^{0}\right)\cdot\nabla\theta^{\varepsilon}_{0}\right)\varphi dx,\\
\mathcal{J}_{1}^{i} & := & \int_{\Omega^{\varepsilon}}\partial_{t}\left(u_{i}^{\varepsilon}-u_{i,0}^{\varepsilon}\right)\phi_{i}dx,\\
\mathcal{J}_{2}^{i} & := & \int_{\Omega^{\varepsilon}}\left(d_{i}^{\varepsilon}\nabla u_{i}^{\varepsilon}-\mathbb{D}^{i}\nabla u_{i,0}^{\varepsilon}\right)\cdot\nabla\phi_{i}dx,\\
\mathcal{J}_{3}^{i} & := & \varepsilon\int_{\Gamma^{\varepsilon}}\left(a_{i}^{\varepsilon}u_{i}^{\varepsilon}-b_{i}^{\varepsilon}v_{i}^{\varepsilon}\right)\phi_{i}dS_{\varepsilon}-\int_{\Omega^{\varepsilon}}\left(A_{i}u_{i,0}^{\varepsilon}-B_{i}v_{i,0}^{\varepsilon}\right)\phi_{i}dx,\\
\mathcal{J}_{4}^{i} & := & \int_{\Omega^{\varepsilon}}\left(\rho_{i}^{\varepsilon}\nabla^{\delta}\theta^{\varepsilon}\cdot\nabla u_{i}^{\varepsilon}-\left(\mathbb{F}^{i}\nabla u_{i,0}^{\varepsilon}\right)\cdot\nabla^{\delta}\theta^{\varepsilon}_{0}\right)\phi_{i}dx+\int_{\Omega^{\varepsilon}}\left(R_{i}\left(u^{\varepsilon}\right)-R_{i}\left(u_{0}^{\varepsilon}\right)\right)\phi_{i}dx.
\label{eq:differenceterm}
\end{eqnarray}

We introduce, in the same spirit as for \eqref{eq:macrore1} and \eqref{eq:macrore2}, another macroscopic reconstruction $\theta_{1}^{\varepsilon}(t,x)$ and $u_{i,1}^{\varepsilon}(t,x)$ defined as follows:
\[
\theta_{1}^{\varepsilon}(t,x) := \theta_{0}^{\varepsilon}(t,x) + \varepsilon\bar{\theta}\left(x,\frac{x}{\varepsilon}\right)\cdot \nabla_{x}\theta^0(t,x),
\]
\[
u_{i,1}^{\varepsilon}(t,x) := u^{\varepsilon}_{i,0}(t,x) + \varepsilon\bar{u}_{i}\left(x,\frac{x}{\varepsilon}\right)\cdot \nabla_{x}u_i^0(t,x), 
\]
where $\bar{\theta}$ and $\bar{u}_{i}$ are the cell functions introduced in Theorem \ref{thm:cell}.

By definition \eqref{eq:macrore1}-\eqref{eq:macrore2}, the macroscopic reconstruction $\theta_{0}^{\varepsilon}(t,x)$ and $u_{i,0}^{\varepsilon}(t,x)$ are interchangeable, respectively, in notation with the limit functions $\theta^{0}(t,x)$ and $u_{i}^{0}(t,x)$ in Theorem \ref{mainthm:1}.

\subsection{Integral estimates}
\begin{remark}\label{rmk:aaa}
	From Lemma \ref{thm:pep}, one can apply directly the $L^{2}$-estimate
	between the space-dependent physical parameters of the microscopic problem (e.g. $\kappa^{\varepsilon}$,
	$\tau^{\varepsilon}$) and their averages, even if the parameters in discussion are actually tensors.
	To this end, these estimates are controlled as $\left\Vert p^{\varepsilon}-\bar{p}\right\Vert _{L^{2}\left(\Omega^{\varepsilon}\right)}\le C\varepsilon^{1/2}$, where $p^{\varepsilon}$ refers to the oscillating coefficient and $\bar{p}$ denotes its average. 
\end{remark}

\begin{lemma}
	\label{thm:pep}Let $Y_1$ as defined in Subsection \ref{subsec:geometry}. Let $p^{\varepsilon}\left(x\right):=p\left(x/\varepsilon\right)$
	belong to $H^{1}\left(\Omega^{\varepsilon}\right)$ satisfying
	\[
	\bar{p}:=\frac{1}{\left|Y_{1}\right|}\int_{Y_{1}}p\left(y\right)dy.
	\]
	Then the following estimate holds
	\[
	\left\Vert p^{\varepsilon}-\bar{p}\right\Vert _{L^{2}\left(\Omega^{\varepsilon}\right)}\le C\varepsilon^{1/2}\left\Vert p^{\varepsilon}\right\Vert _{H^{1}\left(\Omega^{\varepsilon}\right)}.
	\]
\end{lemma}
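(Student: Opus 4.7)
The plan is to exploit the periodic structure by splitting $\Omega^{\varepsilon}$ into cells that are entirely interior (i.e.\ whose $\varepsilon$-homothetic copies $\varepsilon Y_1^k$ sit fully inside $\Omega$) and a thin boundary layer near $\partial\Omega$ whose Lebesgue measure is of order $\varepsilon$. On the interior part a cell-wise Poincar\'e--Wirtinger argument gives the sharp $O(\varepsilon)$ rate, while on the boundary strip only a weaker $O(\varepsilon^{1/2})$ rate is available; the overall bound is governed by the latter, which is exactly the rate claimed.

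More precisely, first I observe the key algebraic identity: by the change of variables $y = x/\varepsilon$,
\begin{equation*}
\frac{1}{|\varepsilon Y_1^k|}\int_{\varepsilon Y_1^k} p^{\varepsilon}(x)\,dx \;=\; \frac{1}{|Y_1|}\int_{Y_1} p(y)\,dy \;=\; \bar p,
\end{equation*}
so $\bar p$ is simultaneously the cell-average of $p^{\varepsilon}$ on every cell $\varepsilon Y_1^k$. Consequently, applying Poincar\'e--Wirtinger on the unit cell $Y_1$ to $y\mapsto p(y)-\bar p$ and scaling back by $y=x/\varepsilon$ yields, for every interior cell,
\begin{equation*}
\|p^{\varepsilon}-\bar p\|_{L^{2}(\varepsilon Y_1^k)}^{2} \;\le\; C\,\varepsilon^{2}\,\|\nabla p^{\varepsilon}\|_{L^{2}(\varepsilon Y_1^k)}^{2}.
\end{equation*}
Summing over all interior cells gives the interior contribution $\|p^{\varepsilon}-\bar p\|_{L^{2}}^{2}\le C\varepsilon^{2}\|\nabla p^{\varepsilon}\|_{L^{2}(\Omega^{\varepsilon})}^{2}$, which is even better than what is asked for.

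For the boundary strip $\Omega^{\varepsilon}_{\mathrm{bd}}$ of width $O(\varepsilon)$ consisting of cells that touch $\partial\Omega$, I use two ingredients. The Lebesgue measure satisfies $|\Omega^{\varepsilon}_{\mathrm{bd}}|\le C\varepsilon$ since $\partial\Omega\in C^{0,1}$. A boundary-trace type inequality (equivalently, the anisotropic Poincar\'e estimate for tubular neighbourhoods of width $\varepsilon$, which is a companion of \eqref{eq:tracein}) yields $\|p^{\varepsilon}\|_{L^{2}(\Omega^{\varepsilon}_{\mathrm{bd}})}^{2}\le C\varepsilon\,\|p^{\varepsilon}\|_{H^{1}(\Omega^{\varepsilon})}^{2}$. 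Moreover, by scaling, $|\bar p|^{2}\le C|\Omega|^{-1}\,\|p^{\varepsilon}\|_{L^{2}(\Omega^{\varepsilon})}^{2}$, so that $\|\bar p\|_{L^{2}(\Omega^{\varepsilon}_{\mathrm{bd}})}^{2}\le C\varepsilon\,\|p^{\varepsilon}\|_{L^{2}(\Omega^{\varepsilon})}^{2}$. Adding the two bounds via the triangle inequality gives $\|p^{\varepsilon}-\bar p\|_{L^{2}(\Omega^{\varepsilon}_{\mathrm{bd}})}^{2}\le C\varepsilon\,\|p^{\varepsilon}\|_{H^{1}(\Omega^{\varepsilon})}^{2}$.

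Combining the interior estimate $O(\varepsilon^{2})\|\nabla p^{\varepsilon}\|_{L^{2}}^{2}$ with the boundary-strip estimate $O(\varepsilon)\|p^{\varepsilon}\|_{H^{1}}^{2}$ and taking square roots produces the target inequality. The only genuine obstacle is the boundary strip: the sharp rate $O(\varepsilon)$ valid on each interior cell cannot be preserved there because cells touching $\partial\Omega$ are incomplete, and it is precisely this cutoff effect that forces the loss of a factor of $\varepsilon^{1/2}$ and the appearance of the full $H^{1}$ norm (rather than just $\|\nabla p^{\varepsilon}\|_{L^{2}}$) on the right-hand side. Care is needed in invoking the Lipschitz regularity of $\partial\Omega$ to control the number of boundary cells; but once the geometric setup from Subsection \ref{subsec:geometry} is kept in mind, the argument is clean.
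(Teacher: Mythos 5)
Your proof is correct, but it follows a genuinely different route from the paper's. The paper proves the lemma by a duality argument: for an arbitrary test function $\phi\in H^1(\Omega^{\varepsilon})$ it writes $\int_{\Omega^{\varepsilon}}(p^{\varepsilon}-\bar p)\phi\,dx$ cell by cell, rescales each cell back to $Y_1$, and uses the line-integral representation
\[
\phi(\varepsilon y)-\phi(\varepsilon z)=\varepsilon\int_0^1\nabla\phi(t\varepsilon y+(1-t)\varepsilon z)\cdot(y-z)\,dt
\]
to extract a factor of $\varepsilon$. This yields $\bigl|\int_{\Omega^{\varepsilon}}(p^{\varepsilon}-\bar p)\phi\,dx\bigr|\le C\varepsilon\,\|p^{\varepsilon}\|_{L^2(\Omega^{\varepsilon})}\|\nabla\phi\|_{L^2(\Omega^{\varepsilon})}$, and the $\varepsilon^{1/2}$ rate then appears when one specializes $\phi=p^{\varepsilon}-\bar p$ and applies Young's inequality, since $\nabla\phi=\nabla p^{\varepsilon}$ is placed into the $H^1$ norm on the right. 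You instead apply the Poincar\'e--Wirtinger inequality on each interior cell $\varepsilon Y_1^k$ (using that $\bar p$ is exactly the cell-average of $p^{\varepsilon}$ by the change of variables $x=\varepsilon y$), obtaining the sharper interior rate $O(\varepsilon)\|\nabla p^{\varepsilon}\|_{L^2}$, and then handle the $O(\varepsilon)$-thick boundary strip via a trace-type estimate that costs a factor $\varepsilon^{1/2}$. Both approaches end at the same inequality, but they localize the loss differently: your decomposition makes clear that the $\varepsilon^{1/2}$ comes entirely from the incomplete cells near $\partial\Omega$, whereas the paper's duality trick loses the half power globally upon testing with $p^{\varepsilon}-\bar p$ itself. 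Your route requires that $Y_1$ be connected (so that a single Poincar\'e--Wirtinger constant on $Y_1$ is available), which is implicit in the geometry of Subsection \ref{subsec:geometry}; this is worth stating explicitly, but it is not a gap.
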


\begin{proof}
	We consider the periodic geometry described in Figure \ref{fig:1} in Subsection \ref{subsec:geometry}. For a fixed test function
	$\phi\in H^{1}\left(\Omega^{\varepsilon}\right)$, we see that
	\begin{align*}
	\int_{\Omega^{\varepsilon}}\left(p^{\varepsilon}-\bar{p}\right)\phi dx & =\sum_{k\in\mathbb{Z}^{d}}\int_{\varepsilon Y_{1}^{k}}\left(p^{\varepsilon}-\bar{p}\right)\phi dx\\
	& \le C\int_{\varepsilon Y_{1}}\left(p^{\varepsilon}-\bar{p}\right)\phi dx.
	\end{align*}
	
	By changing the variable $x=\varepsilon y$, the relations
	\begin{align*}
	\int_{\varepsilon Y_{1}}p\left(\frac{x}{\varepsilon}\right)\phi\left(x\right)dx & =\varepsilon^{d}\int_{Y_{1}}p\left(y\right)\phi\left(\varepsilon y\right)dy,\\
	\int_{\varepsilon Y_{1}}\int_{Y_{1}}p\left(y\right)\phi\left(x\right)dydx & =\varepsilon^{d}\int_{Y_{1}}\int_{Y_{1}}p\left(y\right)\phi\left(\varepsilon z\right)dydz,
	\end{align*}
	enable us to write:
	\begin{equation}
	\int_{\varepsilon Y_{1}}\left(p^{\varepsilon}-\bar{p}\right)\phi dx=\varepsilon^{d}\left|Y_{1}\right|^{-1}\int_{Y_{1}}\int_{Y_{1}}\left(p\left(y\right)\phi\left(\varepsilon y\right)-p\left(y\right)\phi\left(\varepsilon z\right)\right)dzdy.\label{eq:3.3-1}
	\end{equation}
	
	Thanks to the representation
	\[
	\phi\left(\varepsilon y\right)-\phi\left(\varepsilon z\right)=\varepsilon\int_{0}^{1}\nabla\phi\left(t\varepsilon y+\left(1-t\right)\varepsilon z\right)\cdot\left(y-z\right)dt,
	\]
	with $\xi=ty+\left(1-t\right)z$ and $\eta=y-z$, we note that  (\ref{eq:3.3-1})
	can be bounded from above by
	\begin{equation}
	\left|\int_{\varepsilon Y_{1}}\left(p^{\varepsilon}-\bar{p}\right)\phi dx\right|
	\le\varepsilon^{d+1}\left|Y_{1}\right|^{-1}\left(\int_{Y_{1}}\int_{Y_{2}}\left|\nabla\phi\left(\varepsilon\xi\right)\cdot\eta\right|^{2}d\eta d\xi\right)^{1/2}\left(\int_{Y_{1}}\int_{Y_{1}}\left|p\left(y\right)\right|^{2}dydz\right)^{1/2}.\label{eq:3.4-1}
	\end{equation}
	
	In \eqref{eq:3.4-1}, we have denoted $Y_{2}:=\left\{ y-z:\;\text{for}\;y,z\in Y_{1}\right\} $. Also, (\ref{eq:3.4-1}) leads  to
	\[
	\int_{\Omega^{\varepsilon}}\left(p^{\varepsilon}-\bar{p}\right)\phi dx\le C\varepsilon\left\Vert p^{\varepsilon}\right\Vert _{L^{2}\left(\Omega^{\varepsilon}\right)}\left\Vert \nabla\phi\right\Vert _{L^{2}\left(\Omega^{\varepsilon}\right)},
	\]
	and with $\phi=p^{\varepsilon}-\bar{p}$ and \eqref{eq:young},
	\eqref{eq:3.4-1} becomes $\left\Vert p^{\varepsilon}-\bar{p}\right\Vert _{L^{2}\left(\Omega^{\varepsilon}\right)}^{2}\le C\varepsilon\left(\left\Vert p^{\varepsilon}\right\Vert _{L^{2}\left(\Omega^{\varepsilon}\right)}^{2}+\left\Vert \nabla p^{\varepsilon}\right\Vert _{L^{2}\left(\Omega^{\varepsilon}\right)}^{2}\right)$ and hence,	
	we finally get
	\[
	\left\Vert p^{\varepsilon}-\bar{p}\right\Vert _{L^{2}\left(\Omega^{\varepsilon}\right)}\le C\varepsilon^{1/2}\left\Vert p^{\varepsilon}\right\Vert _{H^{1}\left(\Omega^{\varepsilon}\right)}.
	\]
	
	This completes the proof of the lemma.
\end{proof}

Due to the no-flux boundary condition \eqref{eq:boundaryNeu}, we define the function space
\[
H^{1}\left(\Gamma_{N}^{\varepsilon}\right):=\left\{ v\in H^{1}\left(\Gamma^{\varepsilon}\right)|-\kappa^{\varepsilon}\nabla v^{\varepsilon}\cdot\text{n}=0\;\text{on}\;\Gamma_{N}^{\varepsilon}\right\} ,
\]
which is a closed subspace of $H^1(\Gamma^{\varepsilon})$. This plays a role inside Lemma \ref{thm:comp}.

\begin{lemma}\label{thm:comp}
	Let $\theta^{\varepsilon}\in L^{2}\left(0,T;H^{1}\left(\Gamma^{\varepsilon}_{N}\right)\right)$
	and $\theta^{0}\in L^{2}\left(0,T;H^{1}\left(\Omega^{\varepsilon}\right)\right)$.
	For any
	\[
	f_{1}\in C\left(\left[0,T\right];H_{+}^{1}\left(\Omega^{\varepsilon}\right)\cap L_{+}^{\infty}\left(\Omega^{\varepsilon}\right)\right),
	\]
	\[
	f_{2}\in C\left(\left[0,T\right];H_{+}^{1}\left(\Gamma^{\varepsilon}\right)\cap L_{+}^{\infty}\left(\Gamma^{\varepsilon}\right)\right),
	\]
	suppose that there exists $f_{3}\in C\left[0,T\right]$ such that
	\[
	\int_{\Omega^{\varepsilon}}f_{1}\theta^{0}dx=\int_{\Gamma^{\varepsilon}_{R}}f_{2}\theta^{\varepsilon}dS_{\varepsilon}+\varepsilon f_{3}.
	\]
	Then, it exists a $C>0$ such that
	\[
	\left|\int_{\Omega^{\varepsilon}}f_{1}\theta^{0}\varphi dx-\varepsilon\int_{\Gamma^{\varepsilon}_{R}}\left(f_{2}\theta^{\varepsilon}+\varepsilon f_{3}\right)\varphi dS_{\varepsilon}\right|\le\varepsilon C\left\Vert \varphi\right\Vert _{H^{1}\left(\Omega^{\varepsilon}\right)},
	\]
	for any $\varphi\in H^{1}\left(\Omega^{\varepsilon}\right)$.
\end{lemma}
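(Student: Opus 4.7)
The plan is to view
$D(\varphi) := \int_{\Omega^{\varepsilon}}f_{1}\theta^{0}\varphi\,dx - \varepsilon\int_{\Gamma^{\varepsilon}_{R}}(f_{2}\theta^{\varepsilon}+\varepsilon f_{3})\varphi\,dS_{\varepsilon}$
as a linear functional on $H^{1}(\Omega^{\varepsilon})$ and to prove $|D(\varphi)|\le C\varepsilon\|\varphi\|_{H^{1}(\Omega^{\varepsilon})}$ by decomposing $\varphi$ into a mean part, on which the compatibility hypothesis gives a direct $O(\varepsilon)$ cancellation, and a zero-mean fluctuation, to be controlled via a cell-wise Poincar\'e--Wirtinger inequality together with the $\varepsilon$-scaled trace inequality~\eqref{eq:tracein}.

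First, I would set $\bar{\varphi}:=|\Omega^{\varepsilon}|^{-1}\int_{\Omega^{\varepsilon}}\varphi\,dx$ and $\psi:=\varphi-\bar{\varphi}$, so that $D(\varphi)=\bar{\varphi}\,D(1)+D(\psi)$. Substituting the compatibility relation from the hypothesis into $D(1)$ yields a remaining expression of order $\varepsilon$, the bounded surface-to-volume ratio $\varepsilon|\Gamma_{R}^{\varepsilon}|\le C$ being crucial here. Together with $|\bar{\varphi}|\le C\|\varphi\|_{L^{2}(\Omega^{\varepsilon})}\le C\|\varphi\|_{H^{1}(\Omega^{\varepsilon})}$, this produces $|\bar{\varphi}\,D(1)|\le C\varepsilon\|\varphi\|_{H^{1}(\Omega^{\varepsilon})}$, exactly the target order.

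Next, to handle $D(\psi)$, I would refine the decomposition by introducing a piecewise-constant, cell-wise average $\tilde{\psi}$ on each $\varepsilon Y_{1}^{k}$, and split $\psi=\tilde{\psi}+(\psi-\tilde{\psi})$. For the oscillatory part $\psi-\tilde{\psi}$, the Poincar\'e--Wirtinger inequality on each cell yields $\|\psi-\tilde{\psi}\|_{L^{2}(\Omega^{\varepsilon})}\le C\varepsilon\|\nabla\varphi\|_{L^{2}(\Omega^{\varepsilon})}$, and since $\nabla(\psi-\tilde{\psi})=\nabla\varphi$ cell by cell, applying \eqref{eq:tracein} locally gives $\|\psi-\tilde{\psi}\|_{L^{2}(\Gamma^{\varepsilon})}\le C\varepsilon^{1/2}\|\nabla\varphi\|_{L^{2}(\Omega^{\varepsilon})}$. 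Combining these with the $L^{\infty}$ bounds on $f_{1},f_{2},f_{3}$ and the trace bound $\|\theta^{\varepsilon}\|_{L^{2}(\Gamma^{\varepsilon})}\le C\varepsilon^{-1/2}$ (from \eqref{eq:tracein} and the energy estimate of Theorem~\ref{thm:wellposed}) controls both the bulk and surface contributions of $D(\psi-\tilde{\psi})$ by $C\varepsilon\|\varphi\|_{H^{1}}$.

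The main obstacle is closing the bound on the remaining piecewise-constant piece $D(\tilde{\psi})$, since the compatibility hypothesis is stated globally and does not localize cell by cell. Here I would exploit the $Y$-periodicity of $f_{1},f_{2}$ and Lemma~\ref{thm:pep}: replacing the oscillating coefficients by their cell averages generates $O(\varepsilon^{1/2})$ errors in $L^{2}(\Omega^{\varepsilon})$ and at the surface level, which are absorbed against $\|\tilde{\psi}\|_{L^{2}(\Omega^{\varepsilon})}\le \|\psi\|_{L^{2}(\Omega^{\varepsilon})}\le C\|\nabla\varphi\|_{L^{2}(\Omega^{\varepsilon})}$ by the standard zero-mean Poincar\'e inequality. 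After this substitution the integrals involve only averaged coefficients, and combining with the regularity $\theta^{0}\in W^{1,\infty}(\Omega^{\varepsilon})$ assumed in Theorem~\ref{mainthm:1} together with the identity $\int_{\Omega^{\varepsilon}}\psi\,dx=0$, one recovers the desired $O(\varepsilon\|\varphi\|_{H^{1}})$ bound and hence the claim.
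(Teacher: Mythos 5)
The paper proves this lemma by a completely different mechanism from what you propose: it constructs an auxiliary Neumann problem
\[
\Delta_{y}\Psi(\cdot,x,y)\big|_{y=x/\varepsilon}=f_{1}\theta^{0},\qquad
\nabla_{y}\Psi\cdot\mathrm{n}=f_{2}\theta^{\varepsilon}+\varepsilon\tilde f\ \text{on}\ \Gamma_R^{\varepsilon},\qquad
\nabla_{y}\Psi\cdot\mathrm{n}=0\ \text{on}\ \Gamma_N^{\varepsilon},
\]
whose solvability is exactly what the compatibility hypothesis provides, and then exploits the chain-rule identity $\nabla_{y}\big|_{y=x/\varepsilon}=\varepsilon\bigl(\nabla_{x}[\,\cdot\,|_{y=x/\varepsilon}]-(\nabla_{x}\,\cdot\,)|_{y=x/\varepsilon}\bigr)$. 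Integrating by parts in $x$ then makes the boundary terms cancel with the $\varepsilon\int_{\Gamma_R^{\varepsilon}}(\cdot)\varphi\,dS_{\varepsilon}$ piece, and the explicit factor $\varepsilon$ produced by the chain rule is what delivers the $O(\varepsilon\|\varphi\|_{H^1})$ bound for \emph{every} $\varphi$, not just for the mean. Your variational decomposition is a genuinely different strategy, but it does not reproduce this mechanism and, as written, does not close.

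There are two concrete gaps. First, your mean-part argument. Taking the compatibility at face value, $D(1)=(1-\varepsilon)\int_{\Gamma_R^{\varepsilon}}f_2\theta^{\varepsilon}dS_{\varepsilon}+\varepsilon f_3\bigl(1-\varepsilon|\Gamma_R^{\varepsilon}|\bigr)$, and the first term is $O(1)$ (by the compatibility itself, it equals $(1-\varepsilon)\bigl(\int_{\Omega^{\varepsilon}}f_1\theta^0\,dx-\varepsilon f_3\bigr)$), not $O(\varepsilon)$. The bound $\varepsilon|\Gamma_R^{\varepsilon}|\le C$ does not repair this; it only controls the last term. Your step only works if one reads the compatibility with an extra $\varepsilon$ in front of the surface integral, which is not what the hypothesis states. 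Second, and more seriously, your handling of the piecewise-constant piece $D(\tilde\psi)$ does not reach the claimed rate. Replacing $f_1,f_2$ by their cell averages via Lemma~\ref{thm:pep} produces $O(\varepsilon^{1/2})$ errors in $L^2$, and pairing those against $\|\tilde\psi\|_{L^2}\le C\|\nabla\varphi\|_{L^2}$ gives an overall bound $O(\varepsilon^{1/2}\|\varphi\|_{H^1})$, which is strictly weaker than the target $O(\varepsilon\|\varphi\|_{H^1})$. To get the full $\varepsilon$ gain one would need the cell-wise zero-mean of $f_1-\bar f_1$ paired against the cell-wise constant $\tilde\psi$ (which does give an extra $\varepsilon$ for the bulk term after writing $\theta^0=\theta^0_{\text{cell avg}}+O(\varepsilon\nabla\theta^0)$), and an analogous localization for the surface piece — but this is precisely where the global compatibility refuses to localize, which is the obstacle you yourself identify, and your final sentence only asserts the conclusion rather than supplying the missing cell-by-cell estimate. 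The paper's auxiliary PDE sidesteps the whole localization issue, which is why it is the natural route here.
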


\begin{proof}
	We adapt Lemma 5.2 from \cite{Tycho} to our context. The proof of the lemma is based on the following auxiliary problem: Given $f_1,f_2,\theta^{\varepsilon},\theta^0$ as above and $\tilde{f}\in C[0,T]$, find $\Psi $ such that
	\begin{equation}
	\begin{cases}
	\Delta_{y}\left.\Psi\left(\cdot,x,y\right)\right|_{y=\frac{x}{\varepsilon}}=f_{1}\theta^{0} & \;\mbox{for}\;x\in\Omega^{\varepsilon},\\
	\nabla_{y}\Psi\left(\cdot,x,y\right)\cdot\mbox{n}=f_{2}\theta^{\varepsilon}+\varepsilon\tilde{f} & \;\mbox{for}\;(x,y)\in\Gamma_{R}^{\varepsilon},\\
	\nabla_{y}\Psi\cdot\mbox{n}=0 & \;\mbox{at}\;\Gamma_{N}^{\varepsilon}.
	\end{cases}\label{eq:auxi111}
	\end{equation}
	
	By \cite[Lemma 2.1]{PPSW93} and also \cite{CD99}, the problem \eqref{eq:auxi111} has a (weak) $Y$-periodic solution 
	\[
	\left.\Psi\left(\cdot,x,y\right)\right|_{y=\frac{x}{\varepsilon}}\in L^{2}\left(0,T;H^{1}\left(\Omega^{\varepsilon}\right)\right)
	\]
	satisfying the integral equality
	\[
	\int_{\Omega^{\varepsilon}}f_{1}\theta^{0}dx=\int_{\Gamma^{\varepsilon}}\left(f_{2}\theta^{\varepsilon}+\varepsilon\tilde{f}\right)dS_{\varepsilon}=\int_{\Gamma_{R}^{\varepsilon}}f_{2}\theta^{\varepsilon}S_{\varepsilon}+\varepsilon f_{3},
	\]
	with $f_{3}$ being  $\left|\Gamma_{R}^{\varepsilon}\right|^{-1}\tilde{f}$. Moreover, that solution is unique up to an additive constant. 
	
	Multiplying the first equation in \eqref{eq:auxi111} by $\varphi\in H^{1}\left(\Omega^{\varepsilon}\right)$
	and then integrating the resulting equation over $\Omega^{\varepsilon}$,
	we arrive at
	\begin{eqnarray*}
		\left|\int_{\Omega^{\varepsilon}}f_{1}\theta^{0}\varphi dx-\varepsilon\int_{\Gamma^{\varepsilon}_{R}}\left(f_{2}\theta^{\varepsilon}+\varepsilon\tilde{f}\right)\varphi dS_{\varepsilon}\right| & = & \left|\int_{\Omega^{\varepsilon}}\Delta_{y}\left.\Psi\left(\cdot,x,y\right)\right|_{y=\frac{x}{\varepsilon}}\varphi dx-\right.\\
		&  & \left.-\varepsilon\int_{\Gamma_{R}^{\varepsilon}}f_{2}\theta^{\varepsilon}\varphi dS_{\varepsilon}-\varepsilon^{2}\int_{\Gamma_{R}^{\varepsilon}}\tilde{f}\varphi dS_{\varepsilon}\right|
	\end{eqnarray*} 
	
	\begin{eqnarray}
	& = & \left|\int_{\Omega^{\varepsilon}}\varepsilon\left(\nabla_{x}\left[\nabla_{y}\left.\Psi\left(\cdot,x,y\right)\right|_{y=\frac{x}{\varepsilon}}\right]-\nabla_{x}\nabla_{y}\left.\Psi\left(\cdot,x,y\right)\right|_{y=\frac{x}{\varepsilon}}\right)\varphi-\right.\\
	&  & \left.-\varepsilon\int_{\Gamma_{R}^{\varepsilon}}f_{2}\theta^{\varepsilon}\varphi dS_{\varepsilon}-\varepsilon^{2}\left|\Gamma_{R}^{\varepsilon}\right|^{-1}\int_{\Gamma_{R}^{\varepsilon}}f_{3}\varphi dS_{\varepsilon}\right|\\
	& = & \left|\varepsilon\int_{\Gamma^{\varepsilon}}\left(\nabla_{y}\left.\Psi\left(\cdot,x,y\right)\right|_{y=\frac{x}{\varepsilon}}\cdot\mbox{n}\varphi dS_{\varepsilon}-\varepsilon\int_{\Omega^{\varepsilon}}\nabla_{y}\left.\Psi\left(\cdot,x,y\right)\right|_{y=\frac{x}{\varepsilon}}\nabla_{x}\varphi dx\right)-\right.\\
	&  & \left.-\varepsilon\int_{\Omega^{\varepsilon}}\nabla_{x}\nabla_{y}\left.\Psi\left(\cdot,x,y\right)\right|_{y=\frac{x}{\varepsilon}}\varphi dx-\varepsilon\int_{\Gamma_{R}^{\varepsilon}}f_{2}\theta^{\varepsilon}\varphi dS_{\varepsilon}-\varepsilon^{2}\left|\Gamma_{R}^{\varepsilon}\right|^{-1}\int_{\Gamma_{R}^{\varepsilon}}f_{3}\varphi dS_{\varepsilon}\right|.
	\label{eq:haho}
	\end{eqnarray}
	
	Since $\Gamma^{\varepsilon} = \Gamma^{\varepsilon}_{R}\cup \Gamma^{\varepsilon}_{N}$, the choice of boundary conditions in \eqref{eq:auxi111} allows  the boundary integrals in \eqref{eq:haho} to disappear. It follows from the triangle inequality and the H\"older
	inequality that
	\begin{eqnarray*}
		\left|\int_{\Omega^{\varepsilon}}f_{1}\theta^{0}\varphi dx-\varepsilon\int_{\Gamma^{\varepsilon}_{R}}\left(f_{2}\theta^{\varepsilon}+\varepsilon\tilde{f}\right)\varphi dS_{\varepsilon}\right| & \le & \varepsilon\left(\left|\int_{\Omega^{\varepsilon}}\nabla_{y}\left.\Psi\left(\cdot,x,y\right)\right|_{y=\frac{x}{\varepsilon}}\nabla_{x}\varphi dx\right|+\right.\\
		&  & \left.+\left|\int_{\Omega^{\varepsilon}}\nabla_{x}\nabla_{y}\left.\Psi\left(\cdot,x,y\right)\right|_{y=\frac{x}{\varepsilon}}\varphi dx\right|\right)\\
		& \le & C\varepsilon\left\Vert \varphi\right\Vert _{H^{1}\left(\Omega^{\varepsilon}\right)}.
	\end{eqnarray*}
	
	This completes the proof of the lemma.
\end{proof}


\section{Proof of Theorem \ref{mainthm:1}}\label{sec:4}

The proof of Theorem \ref{mainthm:1} relies on a fine control of the $\varepsilon$-dependence needed to estimate each term in \eqref{eq:differenceterm1}-\eqref{eq:differenceterm}. At first, the term $\mathcal{I}_{1}$ can be rewritten as:
\begin{eqnarray}
\int_{\Omega^{\varepsilon}}\partial_{t}\left(\theta^{\varepsilon}-\theta^{0}\right)\left(\theta^{\varepsilon}-\theta^{0}-\varepsilon m^{\varepsilon}\bar{\theta}\left(x,\frac{x}{\varepsilon}\right)\cdot\nabla_{x}\theta^{0}\right) & = & \frac{1}{2}\frac{d}{dt}\left\Vert \theta^{\varepsilon}\left(t\right)-\theta^{0}\left(t\right)\right\Vert _{L^{2}\left(\Omega^{\varepsilon}\right)}^{2}\nonumber \\
&  & -\varepsilon\int_{\Omega^{\varepsilon}}\partial_{t}\left(\theta^{\varepsilon}-\theta^{0}\right)m^{\varepsilon}\bar{\theta}\left(x,\frac{x}{\varepsilon}\right)\cdot\nabla_{x}\theta^{0}dx.\nonumber \\
\label{eq:3.3}
\end{eqnarray}

Similarly, we proceed to estimate $\mathcal{J}_{1}^{i}$ as follows:
\begin{eqnarray}
\int_{\Omega^{\varepsilon}}\partial_{t}\left(u_{i}^{\varepsilon}-u_{i}^{0}\right)\left(u_{i}^{\varepsilon}-u_{i}^{0}-\varepsilon m^{\varepsilon}\bar{u}_{i}\left(x,\frac{x}{\varepsilon}\right)\cdot\nabla_{x}u_{i}^{0}\right) & = & \frac{1}{2}\frac{d}{dt}\left\Vert u_{i}^{\varepsilon}\left(t\right)-u_{i}^{0}\left(t\right)\right\Vert _{L^{2}\left(\Omega^{\varepsilon}\right)}^{2}\nonumber \\
&  & -\varepsilon\int_{\Omega^{\varepsilon}}\partial_{t}\left(u_{i}^{\varepsilon}-u_{i}^{0}\right)m^{\varepsilon}\bar{u}_{i}\left(x,\frac{x}{\varepsilon}\right)\cdot\nabla_{x}u_{i}^{0}dx.\nonumber \\
\label{eq:3.4}
\end{eqnarray}

Using the decomposition
\[
\kappa^{\varepsilon}\nabla\theta^{\varepsilon}-\mathbb{K}\nabla\theta^{0}
=\kappa^{\varepsilon}\nabla\left(\theta^{\varepsilon}-\theta_{1}^{\varepsilon}\right)
+\kappa^{\varepsilon}\nabla\theta_{1}^{\varepsilon}-\mathbb{K}\nabla\theta^{0},
\]
the term $\mathcal{I}_{2}$ thus becomes
\begin{equation}
\mathcal{I}_{2} 
=\int_{\Omega^{\varepsilon}}\kappa^{\varepsilon}\nabla\left(\theta^{\varepsilon}-\theta_{1}^{\varepsilon}\right)\cdot\nabla\varphi dx +\int_{\Omega^{\varepsilon}}\left(\kappa^{\varepsilon}\nabla\theta_{1}^{\varepsilon}-\mathbb{K}\nabla\theta^{0}\right)\cdot\nabla\varphi dx.
\label{eq:3.5}
\end{equation}

Concerning the first term on the right-hand side of (\ref{eq:3.5}), we 
get
\[
\int_{\Omega^{\varepsilon}}\kappa^{\varepsilon}\nabla\left(\theta^{\varepsilon}-\theta_{1}^{\varepsilon}\right)\cdot\nabla\varphi dx\ge\frac{\kappa_{\min}}{2}\left\Vert \nabla\left(\theta^{\varepsilon}-\theta_{1}^{\varepsilon}\right)\left(t\right)\right\Vert _{L^{2}\left(\Omega^{\varepsilon}\right)}^{2}-C\varepsilon^{2}\left\Vert \nabla\left(\left(1-m^{\varepsilon}\right)\bar{\theta}^{\varepsilon}\cdot\nabla_{x}\theta^{0}\left(t\right)\right)\right\Vert _{L^{2}\left(\Omega^{\varepsilon}\right)}^{2}.
\]

It is worth pointing out that the cell problems \eqref{eq:cell1} and \eqref{eq:cell2} require more regularity on the heat conductivity $\kappa$ and the diffusion coefficient $d_i$, namely we need $\kappa, d_i \in H^1(\bar{Y_1})$. On the other side, since these cell problems are elliptic problems on a non-convex polygon, it is well-known  that the cell functions $\bar{\theta}$ and $\bar{u}_{i}$ usually do not belong to $H^2(Y_1)$ in $y$ no matter how smooth the right-hand sides of \eqref{eq:cell1} and \eqref{eq:cell2} are (cf. \cite{Grisvard85}). Due to the extra regularity on $\kappa$ and $d_i$ leading to their Lipschitz property in space and due to the Lipschitz boundary of the microstructure, the solutions can be at most in $H^2_{loc}(\bar{Y_1})$ (see, e.g. \cite[Theorem 2.2.2.3]{Grisvard85}). Notably, that result will not change even if the microstructure boundary is very smooth as in this case. We also emphasize that when investigating problems on domains without holes, the cell problems are then considered in the unit cell $Y$ and by the convexity of that cell, one obtains the regularity of the cell functions up to $H^2(Y)$. 

It follows from \cite[Theorem 4]{Giu98} that the cell problems  \eqref{eq:cell1}-\eqref{eq:cell2} admit a unique solution $(\bar{\theta}, \bar{u}_{i})\in H_{\#}^{1+s}(Y_1)\times H_{\#}^{1+r}(Y_1)$ for some $s,r\in (-\frac{1}{2},\frac{1}{2})$. Essentially, this hinders us when dealing with the term $\varepsilon\left\Vert \nabla\left(\left(1-m^{\varepsilon}\right)\bar{\theta}^{\varepsilon}\cdot\nabla_{x}\theta^{0}\left(t\right)\right)\right\Vert _{L^{2}\left(\Omega^{\varepsilon}\right)}$. In fact, we need $\bar{\theta}\in L^{\infty}(\Omega^{\varepsilon};C^1_{\#}(\bar{Y_1}))$, whereas its maximal regularity only gives $L^{\infty}(\Omega^{\varepsilon};H_{\#}^{1+s}(Y_1))$ (a similar situation holds for $\bar{u}_{i}$). Recall the Sobolev embedding $W^{j+s,p}(Y_1)\subset C^j(\bar{Y_1})$ for $sp>d$ (cf. \cite{Adam75}). Our Hilbertian framework, i.e. $p=2,j=1$, requires $s>d/2\ge 1/2$ which leads to the impossibility of getting $C^1_{\#}(\bar{Y_1})$ from $H_{\#}^{1+s}(Y_1)$. Obviously, one of the possibilities is to working with the domain without holes in 1D, i.e. $d=1$ and $s=1$. The fact that $(\bar{\theta}, \bar{u}_{i})\in [L^{\infty}(\Omega^{\varepsilon};W^{1+s,2}_{\#}(Y_1))]^2$ for $s>d/2$ is strictly needed to obtain $(\bar{\theta}, \bar{u}_{i})\in [L^{\infty}(\Omega^{\varepsilon};C^1_{\#}(\bar{Y_1}))]^2$. Then, with the assumption $\theta^0(t,\cdot) \in W^{1,\infty}(\Omega^\varepsilon) \cap H^2(\Omega^{\varepsilon})$ and the extra regularity $\bar{\theta}\in H^1(\Omega^{\varepsilon};W^{s,2}_{\#}(Y_1))$ providing $\bar{\theta}\in H^1(\Omega^{\varepsilon};C_{\#}(\bar{Y_1}))$,  we estimate that
\begin{align*}
\varepsilon\left\Vert \nabla\left(\left(1-m^{\varepsilon}\right)\bar{\theta}^{\varepsilon}\cdot\nabla_{x}\theta^{0}\left(t\right)\right)\right\Vert _{L^{2}\left(\Omega^{\varepsilon}\right)} & \le\varepsilon\left\Vert \nabla m^{\varepsilon}\right\Vert _{L^{2}\left(\Omega^{\varepsilon}\right)}\left\Vert \bar{\theta}\right\Vert _{L^{\infty}\left(\Omega^{\varepsilon};C\left(\bar{Y_{1}}\right)\right)}\left\Vert \theta^{0}\left(t\right)\right\Vert _{W^{1,\infty}\left(\Omega^{\varepsilon}\right)}\\
& +\varepsilon\left\Vert \nabla_{x}\bar{\theta}\right\Vert _{L^{2}\left(\Omega^{\varepsilon};C\left(\bar{Y_{1}}\right)\right)}\left\Vert \theta^{0}\left(t\right)\right\Vert _{W^{1,\infty}\left(\Omega^{\varepsilon}\right)}\\
& +\left\Vert 1-m^{\varepsilon}\right\Vert _{L^{2}\left(\Omega^{\varepsilon}\right)}\left\Vert \nabla_{y}\bar{\theta}\right\Vert _{L^{\infty}\left(\Omega^{\varepsilon};C\left(\bar{Y_{1}}\right)\right)}\left\Vert \theta^{0}\left(t\right)\right\Vert _{W^{1,\infty}\left(\Omega^{\varepsilon}\right)}\\
& +\varepsilon\left\Vert \bar{\theta}\right\Vert _{L^{\infty}\left(\Omega^{\varepsilon};C\left(\bar{Y_{1}}\right)\right)}\left\Vert \theta^{0}\left(t\right)\right\Vert _{H^{2}\left(\Omega^{\varepsilon}\right)}\\
& \le C\left(\varepsilon+\varepsilon^{1/2}\right),
\end{align*}
where we use the inequalities \eqref{rem:cut-off} together with the fact that $\nabla = \nabla_x + \varepsilon^{-1}\nabla_y$.

Observe that
\begin{equation}
\nabla\theta_{1}^{\varepsilon}=\nabla_x\theta^{0}+\left(\nabla_{y}\bar{\theta}\right)^{\varepsilon}\nabla_{x}\theta^{0}+\varepsilon\bar{\theta}^{\varepsilon}\nabla_{x}\nabla\theta^{0}+\varepsilon\left(\nabla_{x}\bar{\theta}\right)^{\varepsilon}\nabla_{x}\theta^{0}.
\label{eq:difference}
\end{equation}

Hence, we get
\begin{align}
\kappa^{\varepsilon}\nabla\theta_{1}^{\varepsilon}-\mathbb{K}\nabla\theta^{0}\nonumber & =\kappa^{\varepsilon}\left(\nabla\theta^{0}+\left(\nabla_{y}\bar{\theta}\right)^{\varepsilon}\nabla_{x}\theta^{0}\right)-\mathbb{K}\nabla\theta^{0}\\ 
& +\kappa^{\varepsilon}\varepsilon\left(\bar{\theta}^{\varepsilon}\nabla_{x}\nabla\theta^{0}+\left(\nabla_{x}\bar{\theta}\right)^{\varepsilon}\nabla_{x}\theta^{0}\right).
\label{eq:288888}
\end{align}

We note that the $L^2$-norm of the second term on the right-hand side of \eqref{eq:288888} is bounded from above by
\begin{align*}
\varepsilon\left\Vert \kappa^{\varepsilon}\left(\bar{\theta}^{\varepsilon}\nabla_{x}\nabla\theta^{0}+\left(\nabla_{x}\bar{\theta}\right)^{\varepsilon}\nabla_{x}\theta^{0}\right)\right\Vert _{L^{2}\left(\Omega^{\varepsilon}\right)} & \le C\varepsilon\left\Vert \bar{\theta}\right\Vert _{L^{\infty}\left(\Omega^{\varepsilon};C\left(\bar{Y_{1}}\right)\right)}\left\Vert \theta^{0}\right\Vert _{H^{2}\left(\Omega^{\varepsilon}\right)}\\
& +C\varepsilon\left\Vert \nabla_{x}\bar{\theta}\right\Vert _{L^{2}\left(\Omega^{\varepsilon};C\left(\bar{Y_{1}}\right)\right)}\left\Vert \theta^{0}\right\Vert _{W^{1,\infty}\left(\Omega^{\varepsilon}\right)}.
\end{align*}

Let us handle now the remaining quantity $\kappa^{\varepsilon}\left(\nabla\theta^{0}+\left(\nabla_{y}\bar{\theta}\right)^{\varepsilon}\nabla_{x}\theta^{0}\right)-\mathbb{K}\nabla\theta^{0}$. In fact, recall that $\mathcal{G}:=\kappa(\mathbb{I}+\nabla_{y}\bar{\theta})-\mathbb{K}$ is divergence-free with respect to $y\in Y_1$ due to the structure of the cell problems in Theorem \ref{thm:cell}. Moreover, we know that its average also vanishes, i.e.
\[
\int_{Y_{1}}\mathcal{G}dy=0,
\]
by virtue of the definition of the homogenized heat conductivity $\mathbb{K}$ in Theorem \ref{thm:P0}.

As a consequence, $\mathcal{G}$ possesses a vector potential $\mathbf{V}$ and this vector potential is skew-symmetric such that $\mathcal{G} = \nabla_{y}\mathbf{V}$. In general, the selection of the vector potential is non-unique. However, we can choose $\mathbf{V}$ to solve the Poisson equation $\Delta_{y}\mathbf{V} = \eta(x,y)\nabla_{y}\mathcal{G}$ for some function $\eta$ just depending on the dimensions. Using this equation together with the periodic boundary conditions at $\partial Y_0$ and the vanishing cell average, we can determine this vector potential $\mathbf{V}$ uniquely. Now, we formulate the quantity $\mathcal{G}^{\varepsilon}\nabla\theta^0 = \kappa^{\varepsilon}\left(\nabla\theta^{0}+\left(\nabla_{y}\bar{\theta}\right)^{\varepsilon}\nabla_{x}\theta^{0}\right)-\mathbb{K}\nabla\theta^{0}$ in terms of this vector potential. Using the relation that $\nabla_y = \varepsilon\nabla - \varepsilon \nabla_{x}$, we have
\begin{equation}
\mathcal{G}^{\varepsilon}\nabla\theta^0 = \varepsilon\nabla\cdot(\mathbf{V}^{\varepsilon}\nabla\theta^0) 
- \varepsilon \mathbf{V}^{\varepsilon}\Delta\theta^0 
- \varepsilon (\nabla_{x}\mathbf{V})^{\varepsilon}\nabla\theta^0.
\label{eq:qqq}
\end{equation}

Due to the skew-symmetry of $\mathbf{V}$ (and also that of $\mathbf{V}^{\varepsilon}$), the first term on the right-hand side of \eqref{eq:qqq} is divergence-free, indicating the boundedness in $L^2(\Omega^{\varepsilon})$ with the order of $\mathcal{O}(\varepsilon)$. In addition, combining $\bar{\theta}\in L^{\infty}(\Omega^{\varepsilon};W^{1+s,2}_{\#}(Y_1)) \cap H^1(\Omega^{\varepsilon};W^{s,2}_{\#}(Y_1))$ with the above Poisson equation $\Delta_{y}\mathbf{V} = \eta(x,y)\nabla_{y}\mathcal{G}$ yields
\[
\left\Vert \mathbf{V}\right\Vert _{W^{1+s,2}\left(Y_{1}\right)}\le C\left\Vert \mathcal{G}\right\Vert _{W^{s,2}\left(Y_{1}\right)}.
\]

By the compact embedding $W^{s,2}(Y_1)\subset C(\bar{Y_1})$ for $s>d/2\ge 1$, we thus get
\[
\mathbf{V}\in L^{\infty}\left(\Omega^{\varepsilon};C_{\#}\left(\bar{Y_{1}}\right)\right)\cap H^{1}\left(\Omega^{\varepsilon};C_{\#}\left(\bar{Y_{1}}\right)\right).
\]

As a consequence, the boundedness in $L^2(\Omega^{\varepsilon})$ of the second and third terms on the right-hand side of \eqref{eq:qqq} is given by
\[
\varepsilon\left\Vert \mathbf{V}^{\varepsilon}\Delta\theta^{0}+(\nabla_{x}\mathbf{V})^{\varepsilon}\nabla\theta^{0}\right\Vert _{L^{2}\left(\Omega^{\varepsilon}\right)}\le\varepsilon\left\Vert \mathbf{V}\right\Vert _{L^{\infty}\left(\Omega^{\varepsilon};C\left(\bar{Y_{1}}\right)\right)}\left\Vert \theta^{0}\right\Vert _{H^{2}\left(\Omega^{\varepsilon}\right)}+\varepsilon\left\Vert \mathbf{V}\right\Vert _{H^{1}\left(\Omega^{\varepsilon};C\left(\bar{Y_{1}}\right)\right)}\left\Vert \theta^{0}\right\Vert _{W^{1,\infty}\left(\Omega^{\varepsilon}\right)}.
\]

Therefore, with the help of the H\"older inequality, we note that
\[
\int_{\Omega^{\varepsilon}}\left(\kappa^{\varepsilon}\nabla\theta_{1}^{\varepsilon}-\mathbb{K}\nabla\theta^{0}\right)\cdot\nabla\varphi dx
\le C\varepsilon,
\]
which completes the estimates for $\mathcal{I}_{2}$.

Consequently, we can write
\begin{equation}
\mathcal{I}_{2}\ge C\left\Vert \nabla\left(\theta^{\varepsilon}-\theta_{1}^{\varepsilon}\right)\left(t\right)\right\Vert _{\left[L^{2}\left(\Omega^{\varepsilon}\right)\right]^{d}}^{2}-C\left(\varepsilon^{2}+\varepsilon\right)
.\label{eq:3.8}
\end{equation}

Similarly, estimating the term $\mathcal{J}_{2}^{i}$ leads to
\begin{equation}
\mathcal{J}_{2}^{i}\ge C\left\Vert \nabla\left(u_{i}^{\varepsilon}-u_{i,1}^{\varepsilon}\right)\left(t\right)\right\Vert _{\left[L^{2}\left(\Omega^{\varepsilon}\right)\right]^{d}}^{2}-C\left(\varepsilon^{2}+\varepsilon\right).
\label{3.9}
\end{equation}

Concerning the estimate of the term $\mathcal{I}_{3}$, we note the following: Thanks to
the compatibility constraint (Theorem \ref{thm:comp}) with the choice $\varphi = \theta^{\varepsilon}-\theta^{0}$, we get that
\begin{align}
\mathcal{I}_{3} & \le C\varepsilon\left\Vert \varphi\right\Vert _{H^{1}\left(\Omega^{\varepsilon}\right)}\nonumber \\
& \le C\varepsilon\left(\left\Vert \theta^{\varepsilon}-\theta^{0}\right\Vert _{L^{2}\left(\Omega^{\varepsilon}\right)}+\left\Vert \nabla\left(\theta^{\varepsilon}-\theta_{1}^{\varepsilon}\right)\right\Vert _{\left[L^{2}\left(\Omega^{\varepsilon}\right)\right]^{d}}+\left\Vert \nabla\left(\theta_{1}^{\varepsilon}-\theta^{0}\right)\right\Vert _{\left[L^{2}\left(\Omega^{\varepsilon}\right)\right]^{d}}\right)\nonumber \\
& \le C\varepsilon\left(\left\Vert \theta^{\varepsilon}-\theta^{0}\right\Vert _{L^{2}\left(\Omega^{\varepsilon}\right)}+\left\Vert \nabla\left(\theta^{\varepsilon}-\theta_{1}^{\varepsilon}\right)\right\Vert _{\left[L^{2}\left(\Omega^{\varepsilon}\right)\right]^{d}}+C(1+\varepsilon)\right),
\label{3.10}
\end{align}
where we use again the difference relation \eqref{eq:difference} and get the following bound from above
\begin{align*}
\left\Vert \nabla\left(\theta_{1}^{\varepsilon}-\theta^{0}\right)\right\Vert _{L^{2}\left(\Omega^{\varepsilon}\right)} & \le\left\Vert \nabla_{y}\bar{\theta}\right\Vert _{L^{\infty}\left(\Omega^{\varepsilon};C\left(\bar{Y_{1}}\right)\right)}\left\Vert \theta^{0}\right\Vert _{W^{1,\infty}\left(\Omega^{\varepsilon}\right)}\\
& +\varepsilon\left(\left\Vert \bar{\theta}\right\Vert _{L^{\infty}\left(\Omega^{\varepsilon};C\left(\bar{Y_{1}}\right)\right)}\left\Vert \theta^{0}\right\Vert _{H^{2}\left(\Omega^{\varepsilon}\right)}+\left\Vert \nabla_{x}\bar{\theta}\right\Vert _{L^{2}\left(\Omega^{\varepsilon};C\left(\bar{Y_{1}}\right)\right)}\left\Vert \theta^{0}\right\Vert _{W^{1,\infty}\left(\Omega^{\varepsilon}\right)}\right).
\end{align*}

Similarly, the term $\mathcal{J}_{3}^{i}$ is bounded from above by
\begin{equation}
\mathcal{J}_{3}^{i}\le C\varepsilon\left(\left\Vert u_{i}^{\varepsilon}-u_{i}^{0}\right\Vert _{L^{2}\left(\Omega^{\varepsilon}\right)}+\left\Vert \nabla\left(u_{i}^{\varepsilon}-u_{i,1}^{\varepsilon}\right)\right\Vert _{\left[L^{2}\left(\Omega^{\varepsilon}\right)\right]^{d}}+C(1+\varepsilon)\right).\label{3.11}
\end{equation}

Note the elementary decomposition:
\begin{eqnarray*}
	\tau^{\varepsilon}\nabla^{\delta}u_{i}^{\varepsilon}\cdot\nabla\theta^{\varepsilon}-\left(\mathbb{T}^{i}\nabla^{\delta}u_{i}^{0}\right)\cdot\nabla\theta^{0} & = & \left(\tau^{\varepsilon}-\mathbb{T}^{i}\right)\nabla^{\delta}u_{i}^{\varepsilon}\cdot\nabla\theta^{\varepsilon}\\
	&  & +\mathbb{T}^{i}\left(\nabla^{\delta}u_{i}^{\varepsilon}-\nabla^{\delta}u_{i}^{0}\right)\cdot\nabla\theta^{\varepsilon}+\mathbb{T}^{i}\left(\nabla\theta^{\varepsilon}-\nabla\theta^{0}\right)\cdot\nabla^{\delta}u_{i}^{0}.
\end{eqnarray*}

Multiplying the above equation by the test function $\varphi$, we arrive at
\begin{eqnarray*}
	\left(\tau^{\varepsilon}\nabla^{\delta}u_{i}^{\varepsilon}\cdot\nabla\theta^{\varepsilon}-\left(\mathbb{T}^{i}\nabla^{\delta}u_{i}^{0}\right)\cdot\nabla\theta^{0}\right)\varphi & = & \left(\tau^{\varepsilon}-\mathbb{T}^{i}\right)\nabla^{\delta}u_{i}^{\varepsilon}\cdot\nabla\theta^{\varepsilon}\left(\theta^{\varepsilon}-\theta^{0}\right)\\
	&  & -\varepsilon\left(\tau^{\varepsilon}-\mathbb{T}^{i}\right)\nabla^{\delta}u_{i}^{\varepsilon}\cdot\nabla\theta^{\varepsilon}m^{\varepsilon}\bar{\theta}^{\varepsilon}\cdot\nabla_{x}\theta^{0}\\
	&  & +\mathbb{T}^{i}\left(\nabla^{\delta}u_{i}^{\varepsilon}-\nabla^{\delta}u_{i}^{0}\right)\cdot\nabla\theta^{\varepsilon}\left(\theta^{\varepsilon}-\theta^{0}\right)\\
	&  & -\varepsilon\mathbb{T}^{i}\left(\nabla^{\delta}u_{i}^{\varepsilon}-\nabla^{\delta}u_{i}^{0}\right)\cdot\nabla\theta^{\varepsilon}m^{\varepsilon}\bar{\theta}^{\varepsilon}\cdot\nabla_{x}\theta^{0}\\
	&  & +\mathbb{T}^{i}\left(\nabla\theta^{\varepsilon}-\nabla\theta^{0}\right)\cdot\nabla^{\delta}u_{i}^{0}\left(\theta^{\varepsilon}-\theta^{0}\right)\\
	&  & -\varepsilon\mathbb{T}^{i}\left(\nabla\theta^{\varepsilon}-\nabla\theta^{0}\right)\cdot\nabla^{\delta}u_{i}^{0}m^{\varepsilon}\bar{\theta}^{\varepsilon}\cdot\nabla_{x}\theta^{0}\\
	& = & \sum_{k=1}^{6}\mathcal{I}_{4}^{k}.
\end{eqnarray*}

To be able to estimate $\mathcal{I}_4$, we need to ensure the boundedness of each of the terms $\int_{\Omega^{\varepsilon}}\mathcal{I}_4^{k_i}$ for $k_i\in \left\{1,...,6\right\}$ and $i\in \left\{1,...,N\right\}$. We obtain: 
\begin{eqnarray}
\int_{\Omega^{\varepsilon}}\left|\mathcal{I}_{4}^{2}\right|dx
& \le & \varepsilon\left\Vert \nabla^{\delta}u_{i}^{\varepsilon}\cdot\nabla\theta^{\varepsilon}\right\Vert _{L^{2}\left(\Omega^{\varepsilon}\right)}\left\Vert \left(\tau^{\varepsilon}-\mathbb{T}^{i}\right)m^{\varepsilon}\bar{\theta}\left(\frac{x}{\varepsilon}\right)\cdot\nabla_{x}\theta^{0}\right\Vert _{L^{2}\left(\Omega^{\varepsilon}\right)}\nonumber \\
& \le & \varepsilon\left\Vert u_{i}^{\varepsilon}\right\Vert _{L^{\infty}\left(\Omega^{\varepsilon}\right)}\left\Vert \nabla\theta^{\varepsilon}\right\Vert _{\left[L^{2}\left(\Omega^{\varepsilon}\right)\right]^{d}}\left\Vert \bar{\theta}\right\Vert _{L^{\infty}\left(\Omega^{\varepsilon};C(Y_{1})\right)}\left\Vert \theta^{0}\right\Vert _{W^{1,\infty}(\Omega^{\varepsilon})}\left\Vert \tau^{\varepsilon}-\mathbb{T}^{i}\right\Vert _{L^{2}\left(\Omega^{\varepsilon}\right)},\label{eq:3.12}
\end{eqnarray}
and
\begin{eqnarray}
\int_{\Omega^{\varepsilon}}\left|\mathcal{I}_{4}^{4}\right|dx & \le & \frac{\varepsilon}{2}\left|\mathbb{T}^{i}\right|\left\Vert \left(\nabla^{\delta}u_{i}^{\varepsilon}-\nabla^{\delta}u_{i}^{0}\right)\cdot\nabla\theta^{\varepsilon}\right\Vert _{L^{2}\left(\Omega^{\varepsilon}\right)}\left\Vert m^{\varepsilon}\bar{\theta}\left(\frac{x}{\varepsilon}\right)\cdot\nabla_{x}\theta^{0}\right\Vert _{L^{2}\left(\Omega^{\varepsilon}\right)}\nonumber \\
& \le & \frac{\varepsilon}{2}\left|\mathbb{T}^{i}\right|C_{\delta}^{2}\left\Vert u_{i}^{\varepsilon}-u_{i}^{0}\right\Vert _{L^{2}\left(\Omega^{\varepsilon}\right)}\left\Vert \nabla\theta^{\varepsilon}\right\Vert _{\left[L^{2}\left(\Omega^{\varepsilon}\right)\right]^{d}}\left\Vert \bar{\theta}\right\Vert _{L^{\infty}\left(\Omega^{\varepsilon};C(Y_{1})\right)}\left\Vert \theta^{0}\right\Vert _{W^{1,\infty}(\Omega^{\varepsilon})}.\label{3.13}
\end{eqnarray}

Furthermore, we estimate
\begin{eqnarray}
\int_{\Omega^{\varepsilon}}\left|\mathcal{I}_{4}^{1}\right|dx & \le & \left\Vert \tau^{\varepsilon}-\mathbb{T}^{i}\right\Vert _{L^{2}\left(\Omega^{\varepsilon}\right)}\left\Vert \nabla^{\delta}u_{i}^{\varepsilon}\cdot\nabla\theta^{\varepsilon}\right\Vert _{L^{2}\left(\Omega^{\varepsilon}\right)}\left\Vert \theta^{\varepsilon}-\theta^{0}\right\Vert _{L^{\infty}\left(\Omega^{\varepsilon}\right)}\nonumber \\
& \le & C_{\delta}\left\Vert \tau^{\varepsilon}-\mathbb{T}^{i}\right\Vert _{L^{2}\left(\Omega^{\varepsilon}\right)}\left\Vert u_{i}^{\varepsilon}\right\Vert _{L^{\infty}\left(\Omega^{\varepsilon}\right)}\left\Vert \nabla\theta^{\varepsilon}\right\Vert _{\left[L^{2}\left(\Omega^{\varepsilon}\right)\right]^{d}}\left(\left\Vert \theta^{\varepsilon}\right\Vert _{L^{\infty}\left(\Omega^{\varepsilon}\right)}+\left\Vert \theta^{0}\right\Vert _{W^{1,\infty}\left(\Omega^{\varepsilon}\right)}\right),
\end{eqnarray}
and by Young's inequality, it yields
\begin{eqnarray}
\int_{\Omega^{\varepsilon}}\left|\mathcal{I}_{4}^{3}\right|dx & \le & \frac{\left|\mathbb{T}^{i}\right|^{2}}{2}C_{\delta}^{2}\left\Vert \nabla^{\delta}u_{i}^{\varepsilon}-\nabla^{\delta}u_{i}^{0}\right\Vert _{L^{\infty}\left(\Omega^{\varepsilon}\right)}^{2}\left\Vert \nabla\theta^{\varepsilon}\right\Vert _{\left[L^{2}\left(\Omega^{\varepsilon}\right)\right]^{d}}^{2}+\frac{1}{2}\left\Vert \theta^{\varepsilon}-\theta^{0}\right\Vert _{L^{2}\left(\Omega^{\varepsilon}\right)}^{2}\nonumber \\
& \le & \frac{\left|\mathbb{T}^{i}\right|^{2}}{2}C_{\delta}^{4}\left\Vert \nabla\theta^{\varepsilon}\right\Vert _{\left[L^{2}\left(\Omega^{\varepsilon}\right)\right]^{d}}^{2}\left\Vert u_{i}^{\varepsilon}-u_{i}^{0}\right\Vert _{L^{2}\left(\Omega^{\varepsilon}\right)}^{2}+\frac{1}{2}\left\Vert \theta^{\varepsilon}-\theta^{0}\right\Vert _{L^{2}\left(\Omega^{\varepsilon}\right)}^{2},
\end{eqnarray}
and
\begin{eqnarray}
\int_{\Omega^{\varepsilon}}\left|\mathcal{I}_{4}^{5}\right|dx & \le & \frac{\left|\mathbb{T}_{i}\right|^{2}}{2}\left\Vert \left(\nabla\theta^{\varepsilon}-\nabla\theta^{0}\right)\cdot\nabla^{\delta}u_{i}^{0}\right\Vert _{L^{2}\left(\Omega^{\varepsilon}\right)}^{2}+\frac{1}{2}\left\Vert \theta^{\varepsilon}-\theta^{0}\right\Vert _{L^{2}\left(\Omega^{\varepsilon}\right)}^{2}\nonumber \\
& \le & \frac{\left|\mathbb{T}_{i}\right|^{2}}{2}C_{\delta}^{2}\left\Vert u_{i}^{0}\right\Vert _{L^{\infty}\left(\Omega^{\varepsilon}\right)}^{2}\left\Vert \nabla\theta^{\varepsilon}-\nabla\theta^{0}\right\Vert _{\left[L^{2}\left(\Omega^{\varepsilon}\right)\right]^{d}}^{2}+\frac{1}{2}\left\Vert \theta^{\varepsilon}-\theta^{0}\right\Vert _{L^{2}\left(\Omega^{\varepsilon}\right)}^{2},
\end{eqnarray}
\begin{equation}
\int_{\Omega^{\varepsilon}}\left|\mathcal{I}_{4}^{6}\right|dx\le\frac{\varepsilon\left|\mathbb{T}_{i}\right|^{2}}{2}\left\Vert u_{i}^{0}\right\Vert _{L^{\infty}\left(\Omega^{\varepsilon}\right)}^{2}\left\Vert \nabla\theta^{\varepsilon}-\nabla\theta^{0}\right\Vert _{\left[L^{2}\left(\Omega^{\varepsilon}\right)\right]^{d}}^{2}+\frac{\varepsilon}{2}\left\Vert \bar{\theta}\right\Vert^2 _{L^{\infty}\left(\Omega^{\varepsilon};C(\bar{Y_{1}})\right)}\left\Vert \theta^{0}\right\Vert^2 _{W^{1,\infty}(\Omega^\varepsilon)}.\label{3.17}
\end{equation}

Remark that the first integral in $\mathcal{J}_{4}^{i}$
can be estimated similarly. On top of that, observe that we can find constants
$C_{R_{i}}>0$ (independent of $\varepsilon$) such that
\[
\left\Vert R_{i}\left(u^{\varepsilon}\right)-R_{i}\left(u^{0}\right)\right\Vert _{L^{2}\left(\Omega^{\varepsilon}\right)}\le C_{R_{i}}\sum_{j=1}^{N}\left\Vert u_{j}^{\varepsilon}-u_{j}^{0}\right\Vert _{L^{2}\left(\Omega^{\varepsilon}\right)}\quad\mbox{for}\;i\in\left\{ 1,\ldots,N\right\},
\]
in which the constants $C_{R_i}$ depend on the $L^{\infty}$-bounds of the concentrations $u^{\varepsilon},u^0$ as discussed in \cite[Section 5]{KM16}.

The estimate on the second integral of $\mathcal{J}_{4}^{i}$
can be computed directly. Note that for $i\in\left\{ 1,\ldots,N\right\} $, we have:
\begin{eqnarray*}
	\left(R_{i}\left(u^{\varepsilon}\right)-R_{i}\left(u^{0}\right)\right)\phi_{i} & = & \left(R_{i}\left(u^{\varepsilon}\right)-R_{i}\left(u^{0}\right)\right)\left(u_{i}^{\varepsilon}-u_{i}^{0}\right)\\
	&  & -\varepsilon\left(R_{i}\left(u^{\varepsilon}\right)-R_{i}\left(u^{0}\right)\right)m^{\varepsilon}\bar{u}_{i}\left(\frac{x}{\varepsilon}\right)\cdot\nabla_{x}u_{i}^{0}.
\end{eqnarray*}

This gives
\begin{eqnarray}
\int_{\Omega^{\varepsilon}}\left(R_{i}\left(u^{\varepsilon}\right)-R_{i}\left(u^{0}\right)\right)\phi_{i}dx & \le & C_{R_{i}}\sum_{j=1}^{N}\left\Vert u_{j}^{\varepsilon}-u_{j}^{0}\right\Vert _{L^{2}\left(\Omega^{\varepsilon}\right)}\left(\left\Vert u_{i}^{\varepsilon}-u_{i}^{0}\right\Vert _{L^{2}\left(\Omega^{\varepsilon}\right)}+\right.\nonumber \\
&  & \left.+\varepsilon\left\Vert \bar{u}_{i}\right\Vert _{L^{\infty}\left(\Omega^{\varepsilon};C(\bar{Y_{1}})\right)}\left\Vert u_{i}^{0}\right\Vert _{W^{1,\infty}(\Omega^{\varepsilon})}\right).\label{3.18}
\end{eqnarray}

Collecting the estimates (\ref{eq:3.8}), (\ref{3.9}),
(\ref{3.10}), (\ref{3.11}), (\ref{eq:3.12})-(\ref{3.17}) and (\ref{3.18}),
we obtain:\\
$\displaystyle{
	\left\Vert \nabla\left(\theta^{\varepsilon}-\theta_{1}^{\varepsilon}\right)\left(t\right)\right\Vert _{\left[L^{2}\left(\Omega^{\varepsilon}\right)\right]^{d}}^{2}+\sum_{i=1}^{N}\left\Vert \nabla\left(u_{i}^{\varepsilon}-u_{i,1}^{\varepsilon}\right)\left(t\right)\right\Vert _{\left[L^{2}\left(\Omega^{\varepsilon}\right)\right]^{d}}^{2}
}
$\\
$
\le C\left(\varepsilon^{2}+\varepsilon\right){\displaystyle +C\varepsilon\left(\left\Vert \theta^{\varepsilon}\left(t\right)-\theta^{0}\left(t\right)\right\Vert _{L^{2}\left(\Omega^{\varepsilon}\right)}+\left\Vert \nabla\left(\theta^{\varepsilon}-\theta_{1}^{\varepsilon}\right)\left(t\right)\right\Vert _{\left[L^{2}\left(\Omega^{\varepsilon}\right)\right]^{d}}+C(1+\varepsilon)\right)}
$\\
$\displaystyle{\quad
	+C\varepsilon\sum_{i=1}^{N}\left(\left\Vert u_{i}^{\varepsilon}\left(t\right)-u_{i}^{0}\left(t\right)\right\Vert _{L^{2}\left(\Omega^{\varepsilon}\right)}+\left\Vert \nabla\left(u_{i}^{\varepsilon}-u_{i,1}^{\varepsilon}\right)\left(t\right)\right\Vert _{\left[L^{2}\left(\Omega^{\varepsilon}\right)\right]^{d}}+C(1+\varepsilon)\right)
}
$\\
$\displaystyle{\quad
	+C\left(\left\Vert \tau^{\varepsilon}-\mathbb{T}^{i}\right\Vert _{L^{2}\left(\Omega^{\varepsilon}\right)}\left\Vert \theta^{\varepsilon}\left(t\right)-\theta^{0}\left(t\right)\right\Vert _{L^{2}\left(\Omega^{\varepsilon}\right)}+\sum_{i=1}^{N}\left\Vert \rho_{i}^{\varepsilon}-\mathbb{F}^{i}\right\Vert _{L^{2}\left(\Omega^{\varepsilon}\right)}\left\Vert u_{i}^{\varepsilon}\left(t\right)-u_{i}^{0}\left(t\right)\right\Vert _{L^{2}\left(\Omega^{\varepsilon}\right)}\right)
}
$\\
$\displaystyle{\quad
	+C\varepsilon\left(\sum_{i=1}^{N}\left\Vert u_{i}^{\varepsilon}\left(t\right)-u_{i}^{0}\left(t\right)\right\Vert _{L^{2}\left(\Omega^{\varepsilon}\right)}+\left\Vert \theta^{\varepsilon}\left(t\right)-\theta^{0}\left(t\right)\right\Vert _{L^{2}\left(\Omega^{\varepsilon}\right)}\right)
}
$\\
$\displaystyle{\quad
	+C\left(\sum_{i=1}^{N}\left\Vert u_{i}^{\varepsilon}\left(t\right)-u_{i}^{0}\left(t\right)\right\Vert _{L^{2}\left(\Omega^{\varepsilon}\right)}^{2}+\left\Vert \theta^{\varepsilon}\left(t\right)-\theta^{0}\left(t\right)\right\Vert _{L^{2}\left(\Omega^{\varepsilon}\right)}^{2}\right)
}
$\\
$\displaystyle{\quad
	+C\varepsilon\left(\left\Vert \nabla\left(\theta^{\varepsilon}-\theta^{0}\right)\left(t\right)\right\Vert _{\left[L^{2}\left(\Omega^{\varepsilon}\right)\right]^{d}}^{2}+\sum_{i=1}^{N}\left\Vert \nabla\left(u_{i}^{\varepsilon}-u_{i}^{0}\right)\left(t\right)\right\Vert _{\left[L^{2}\left(\Omega^{\varepsilon}\right)\right]^{d}}^{2}\right)+C\varepsilon.}
$

Notably, Theorem \ref{thm:pep} provides us that the $L^2$-error estimates between the Soret and Dufour coefficients and their homogenized (averaged) versions, i.e. $\left\Vert \tau^{\varepsilon}-\mathbb{T}^{i}\right\Vert _{L^{2}\left(\Omega^{\varepsilon}\right)}$ and $\left\Vert \rho_{i}^{\varepsilon}-\mathbb{F}^{i}\right\Vert _{L^{2}\left(\Omega^{\varepsilon}\right)}$ are of the order $\mathcal{O}(\varepsilon^{1/2})$. It thus yields that
\begin{align}
\left\Vert \nabla\left(\theta^{\varepsilon}-\theta_{1}^{\varepsilon}\right)\left(t\right)\right\Vert _{\left[L^{2}\left(\Omega^{\varepsilon}\right)\right]^{d}}^{2}+\sum_{i=1}^{N}\left\Vert \nabla\left(u_{i}^{\varepsilon}-u_{i,1}^{\varepsilon}\right)\left(t\right)\right\Vert _{\left[L^{2}\left(\Omega^{\varepsilon}\right)\right]^{d}}^{2}\qquad\qquad\qquad\qquad \nonumber\\
\le C\left(\varepsilon^{2}+\varepsilon\right)+C\varepsilon\left(\left\Vert \theta^{\varepsilon}\left(t\right)-\theta^{0}\left(t\right)\right\Vert _{L^{2}\left(\Omega^{\varepsilon}\right)}+\left\Vert \nabla\left(\theta^{\varepsilon}-\theta_{1}^{\varepsilon}\right)\left(t\right)\right\Vert _{\left[L^{2}\left(\Omega^{\varepsilon}\right)\right]^{d}}\right) \nonumber\\
+C\varepsilon\sum_{i=1}^{N}\left(\left\Vert u_{i}^{\varepsilon}\left(t\right)-u_{i}^{0}\left(t\right)\right\Vert _{L^{2}\left(\Omega^{\varepsilon}\right)}+\left\Vert \nabla\left(u_{i}^{\varepsilon}-u_{i,1}^{\varepsilon}\right)\left(t\right)\right\Vert _{\left[L^{2}\left(\Omega^{\varepsilon}\right)\right]^{d}}\right)\qquad \nonumber\\
+C\varepsilon^{1/2}\left(\left\Vert \theta^{\varepsilon}\left(t\right)-\theta^{0}\left(t\right)\right\Vert _{L^{2}\left(\Omega^{\varepsilon}\right)}+\sum_{i=1}^{N}\left\Vert u_{i}^{\varepsilon}\left(t\right)-u_{i}^{0}\left(t\right)\right\Vert _{L^{2}\left(\Omega^{\varepsilon}\right)}\right)\qquad \; \nonumber\\
+C\left(\sum_{i=1}^{N}\left\Vert u_{i}^{\varepsilon}\left(t\right)-u_{i}^{0}\left(t\right)\right\Vert _{L^{2}\left(\Omega^{\varepsilon}\right)}^{2}+\left\Vert \theta^{\varepsilon}\left(t\right)-\theta^{0}\left(t\right)\right\Vert _{L^{2}\left(\Omega^{\varepsilon}\right)}^{2}\right)\qquad \qquad \nonumber\\
+C\varepsilon\left(\left\Vert \nabla\left(\theta^{\varepsilon}-\theta_{1}^{\varepsilon}\right)\left(t\right)\right\Vert _{\left[L^{2}\left(\Omega^{\varepsilon}\right)\right]^{d}}^{2}+\sum_{i=1}^{N}\left\Vert \nabla\left(u_{i}^{\varepsilon}-u_{i,1}^{\varepsilon}\right)\left(t\right)\right\Vert _{\left[L^{2}\left(\Omega^{\varepsilon}\right)\right]^{d}}^{2}\right).
\label{3.19-1}
\end{align}

It now remains to estimate the second term on the right-hand side of \eqref{eq:3.3}-\eqref{eq:3.4}. In fact, integrating by parts gives
\begin{align*}
\int_{0}^{t}\int_{\Omega^{\varepsilon}}m^{\varepsilon}\partial_{t}\left(u_{i}^{\varepsilon}-u_{i}^{0}\right)\bar{u}_{i}\left(\frac{x}{\varepsilon}\right)\cdot\nabla_{x}u_{i}^{0}\left(s,x\right)dxds= & \left.\int_{\Omega^{\varepsilon}}m^{\varepsilon}\left(u_{i}^{\varepsilon}-u_{i}^{0}\right)\bar{u}_{i}\left(\frac{x}{\varepsilon}\right)\cdot\nabla_{x}u_{i}^{0}\left(s,x\right)dx\right|_{s=0}^{s=t}\\ -
& \int_{0}^{t}\int_{\Omega^{\varepsilon}}m^{\varepsilon}\left(u_{i}^{\varepsilon}-u_{i}^{0}\right)\bar{u}_{i}\left(\frac{x}{\varepsilon}\right)\cdot\nabla_{x}\partial_{t}u_{i}^{0}\left(s,x\right)dxds.
\end{align*}

We then observe that
\begin{align*}
\varepsilon\left|\int_{\Omega^{\varepsilon}}m^{\varepsilon}\left[\left(u_{i}^{\varepsilon}-u_{i}^{0}\right)-\left(u_{i}^{\varepsilon}\left(0\right)-u_{i}^{0}\left(0\right)\right)\right]\bar{u}_{i}^{\varepsilon}\cdot\nabla_{x}u_{i}^{0}\left(t,x\right)dx\right|\\
\le C\varepsilon\left(\left\Vert u_{i}^{\varepsilon}\left(t\right)-u_{i}^{0}\left(t\right)\right\Vert _{L^{2}\left(\Omega^{\varepsilon}\right)}+\left\Vert u_{i}^{\varepsilon,0}-u_{i}^{0,0}\right\Vert _{L^{2}\left(\Omega^{\varepsilon}\right)}\right),
\end{align*}
and hence,
\begin{align*}
\varepsilon\left|\int_{\Omega^{\varepsilon}}m^{\varepsilon}\left[\left(\theta^{\varepsilon}-\theta^{0}\right)-\left(\theta^{\varepsilon}\left(0\right)-\theta^{0}\left(0\right)\right)\right]\bar{\theta}^{\varepsilon}\cdot\nabla_{x}\theta^{0}\left(t,x\right)dx\right|\\
\le C\varepsilon\left(\left\Vert \theta^{\varepsilon}\left(t\right)-\theta^{0}\left(t\right)\right\Vert _{L^{2}\left(\Omega^{\varepsilon}\right)}+\left\Vert \theta^{\varepsilon,0}-\theta^{0,0}\right\Vert _{L^{2}\left(\Omega^{\varepsilon}\right)}\right).
\end{align*}

For all $t\in (0,T]$, we set
\begin{align*}
w_{1}\left(t\right) & =\left\Vert \theta^{\varepsilon}\left(t\right)-\theta^{0}\left(t\right)\right\Vert _{L^{2}\left(\Omega^{\varepsilon}\right)}^{2}+\sum_{i=1}^{N}\left\Vert u_{i}^{\varepsilon}\left(t\right)-u_{i}^{0}\left(t\right)\right\Vert _{L^{2}\left(\Omega^{\varepsilon}\right)}^{2},\\
w_{2}\left(t\right) & =\left\Vert \nabla\left(\theta^{\varepsilon}-\theta_{1}^{\varepsilon}\right)\left(t\right)\right\Vert _{\left[L^{2}\left(\Omega^{\varepsilon}\right)\right]^{d}}^{2}+\sum_{i=1}^{N}\left\Vert \nabla\left(u_{i}^{\varepsilon}-u_{i,1}^{\varepsilon}\right)\left(t\right)\right\Vert _{\left[L^{2}\left(\Omega^{\varepsilon}\right)\right]^{d}}^{2},\\
w_{0} & =\left\Vert \theta^{\varepsilon,0}-\theta^{0,0}\right\Vert _{L^{2}\left(\Omega^{\varepsilon}\right)}^{2}+\sum_{i=1}^{N}\left\Vert u_{i}^{\varepsilon,0}-u_{i}^{0,0}\right\Vert _{L^{2}\left(\Omega^{\varepsilon}\right)}^{2}.
\end{align*}

Then, when integrating \eqref{3.19-1} and \eqref{eq:3.3}-\eqref{eq:3.4} from 0 to $t$, we are led to the following Gronwall-like estimate
\[
w_{1}\left(t\right)+\int_{0}^{t}w_{2}\left(s\right)ds\le C\left(\varepsilon^{2}+\varepsilon+\left(1+\varepsilon\right)w_{0}+\varepsilon\int_{0}^{t}w_{1}\left(s\right)ds\right),
\]
which can be rewritten as
\begin{equation}
w_{1}\left(t\right)+\int_{0}^{t}w_{2}\left(s\right)ds\le C\left(\varepsilon+\left(1+\varepsilon\right)w_{0}\right)e^{C\varepsilon t}
\quad \mbox{for}\; t\in [0,T].
\label{eq:corrector1}
\end{equation}

Finally, we turn our attention to the corrector estimate for $v_{i}^{\varepsilon}$. For $i\in\left\{ 1,...,N\right\} $ we consider the equation
for the reconstruction $v_{i,0}^{\varepsilon}=v_{i}^{0}$, obtained
from \eqref{eq:2.14}, with the test function $\psi_{i}\in L^{2}\left(\Gamma^{\varepsilon}\right)$
and integrate the resulting equation over $\Gamma^{\varepsilon}$
to get
\begin{equation}
\varepsilon\int_{\Gamma^{\varepsilon}}\partial_{t}v_{i}^{0}\psi_{i}dS_{\varepsilon}=\varepsilon\int_{\Gamma^{\varepsilon}}\left(A_{i}u_{i}^{0}-B_{i}v_{i}^{0}\right)\psi_{i}dS_{\varepsilon}.
\label{eq:lasteq}
\end{equation}

Then, we find the difference equation for the micro concentration
$v_{i}^{\varepsilon}$ and the reconstruction $v_{i}^{0}$ by subtracting
the third equation of \eqref{eq:weak1} and \eqref{eq:lasteq}, provided that
\begin{align*}
\varepsilon\int_{\Gamma^{\varepsilon}}\partial_{t}\left(v_{i}^{\varepsilon}-v_{i}^{0}\right)\psi_{i}dS_{\varepsilon} & =\varepsilon\int_{\Gamma^{\varepsilon}}\left(a_{i}^{\varepsilon}u_{i}^{\varepsilon}-A_{i}u_{i}^{0}\right)\psi_{i}dS_{\varepsilon}-\varepsilon\int_{\Gamma^{\varepsilon}}\left(b_{i}^{\varepsilon}v_{i}^{\varepsilon}-B_{i}v_{i}^{0}\right)\psi_{i}dS_{\varepsilon}\\
& =\varepsilon\int_{\Gamma^{\varepsilon}}\left[a_{i}^{\varepsilon}\left(u_{i}^{\varepsilon}-u_{i}^{0}\right)+\left(a_{i}^{\varepsilon}-A_{i}\right)u_{i}^{0}\right]\psi_{i}dS_{\varepsilon}\\
& -\varepsilon\int_{\Gamma^{\varepsilon}}\left[b_{i}^{\varepsilon}\left(v_{i}^{\varepsilon}-v_{i}^{0}\right)+\left(b_{i}^{\varepsilon}-B_{i}\right)v_{i}^{0}\right]\psi_{i}dS_{\varepsilon}.
\end{align*}

Hereby, we choose $\psi_{i}=v_{i}^{\varepsilon}-v_{i}^{0}$ to obtain
the following estimate
\begin{align}
\frac{\varepsilon}{2}\frac{d}{dt}\left\Vert v_{i}^{\varepsilon}-v_{i}^{0}\right\Vert _{L^{2}\left(\Gamma^{\varepsilon}\right)}^{2} & \le C\varepsilon\left(\left\Vert u_{i}^{\varepsilon}-u_{i}^{0}\right\Vert _{L^{2}\left(\Gamma^{\varepsilon}\right)}^{2}+\left\Vert v_{i}^{\varepsilon}-v_{i}^{0}\right\Vert _{L^{2}\left(\Gamma^{\varepsilon}\right)}^{2}\right)\nonumber \\ 
& +\varepsilon\int_{\Gamma^{\varepsilon}}\left|a_{i}^{\varepsilon}-A_{i}\right|\left|u_{i}^{0}\right|\left|v_{i}^{\varepsilon}-v_{i}^{0}\right|dS_{\varepsilon}+\varepsilon\int_{\Gamma^{\varepsilon}}\left|b_{i}^{\varepsilon}-B_{i}\right|\left|v_{i}^{0}\right|\left|v_{i}^{\varepsilon}-v_{i}^{0}\right|dS_{\varepsilon}.
\label{eq:nextin}
\end{align}

Since $\Omega^{\varepsilon}$ is a Lipschitz domain, we recall the
trace embedding $H^{1}\left(\Omega^{\varepsilon}\right)\subset L^{q}\left(\partial\Omega^{\varepsilon}\right)$
which holds for $1\le q\le2_{\partial\Omega^{\varepsilon}}^{*}$ where
$2_{\partial\Omega^{\varepsilon}}^{*}=2\left(d-1\right)/\left(d-2\right)$
if $d\ge3$, and $2_{\partial\Omega^{\varepsilon}}^{*}=\infty$ if
$d=2$ (cf. \cite{Fan08}). Therefore, when the two-dimensional case is concentrated, we continue
to estimate \eqref{eq:nextin}, as follows:
\begin{align*}
\frac{\varepsilon}{2}\frac{d}{dt}\sum_{i=1}^{N}\left\Vert v_{i}^{\varepsilon}-v_{i}^{0}\right\Vert _{L^{2}\left(\Gamma^{\varepsilon}\right)}^{2} & \le C\varepsilon\left(\sum_{i=1}^{N}\left\Vert u_{i}^{\varepsilon}-u_{i}^{0}\right\Vert _{L^{2}\left(\Gamma^{\varepsilon}\right)}^{2}+\sum_{i=1}^{N}\left\Vert v_{i}^{\varepsilon}-v_{i}^{0}\right\Vert _{L^{2}\left(\Gamma^{\varepsilon}\right)}^{2}\right)\\
& +C\varepsilon\left(\sum_{i=1}^{N}\left\Vert a_{i}^{\varepsilon}-A_{i}\right\Vert _{L^{2}\left(\Gamma^{\varepsilon}\right)}^{2}+\sum_{i=1}^{N}\left\Vert b_{i}^{\varepsilon}-B_{i}\right\Vert _{L^{2}\left(\Gamma^{\varepsilon}\right)}^{2}\right).
\end{align*}

Observe that using the trace inequality \eqref{eq:tracein} for the difference norms $\left\Vert a_{i}^{\varepsilon}-A_{i}\right\Vert _{L^{2}\left(\Gamma^{\varepsilon}\right)}$,
$\left\Vert b_{i}^{\varepsilon}-B_{i}\right\Vert _{L^{2}\left(\Gamma^{\varepsilon}\right)}$
and $\left\Vert u_{i}^{\varepsilon}-u_{i}^{0}\right\Vert _{L^{2}\left(\Gamma^{\varepsilon}\right)}$ together with Lemma \ref{thm:pep} and \eqref{eq:corrector1} 
gives
\begin{equation}
\frac{\varepsilon}{2}\frac{d}{dt}\sum_{i=1}^{N}\left\Vert v_{i}^{\varepsilon}-v_{i}^{0}\right\Vert _{L^{2}\left(\Gamma^{\varepsilon}\right)}^{2}\le C\max\left\{ \varepsilon,\varepsilon^{\gamma}\right\} +C\varepsilon\sum_{i=1}^{N}\left\Vert v_{i}^{\varepsilon}-v_{i}^{0}\right\Vert _{L^{2}\left(\Gamma^{\varepsilon}\right)}^{2}.
\label{eq:sapxongroi}
\end{equation}
Note herein that the gradient norms are ignored when applying the trace inequality to the differences. It is simply because that they are of the order $\mathcal{O}(\varepsilon^2)$ by their own regularity.

Henceforward, we apply the Gronwall inequality to \eqref{eq:sapxongroi} and obtain
\[
\varepsilon\sum_{i=1}^{N}\left\Vert v_{i}^{\varepsilon}-v_{i}^{0}\right\Vert _{L^{2}\left(\Gamma^{\varepsilon}\right)}^{2}\le C\max\left\{ \varepsilon,\varepsilon^{\gamma}\right\} e^{C\varepsilon t}.
\]

In the same manner, if $d\ge 3$ is applied, we can bound the absolute differences $\left|a_{i}^{\varepsilon} - A_i\right|$ and $\left|b_{i}^{\varepsilon} - B_i\right|$ in \eqref{eq:nextin} from above by a constant $C$ independent of $\varepsilon$ (by $\left(\text{A}_{1}\right)$) and then get back the estimate \eqref{eq:sapxongroi}.

This completes the proof of Theorem \ref{mainthm:1}.


\section{Conclusions}\label{sec:conclu}
In this work, we have presented corrector estimates for  the homogenization limit for a thermo-diffusion system with Smoluchowski interactions coupled with a system of differential equations, posed in a perforated domain.  This type of error-control justifies the formal homogenization asymptotics obtained in \cite{KMK15} and completes the convergence result in \cite{KAM14} by giving convergence rates. This is done using the concept of macroscopic reconstruction together with fine integral estimates on the solution and oscillating coefficients. Our working technique can be applied to a larger class of coupled nonlinear systems of partial differential equations posed in perforated media.


\section*{Acknowledgment}
This work was initiated when V.A.K. visited the Department of Mathematics and Computer Science of Karlstad University, Sweden. This work is dedicated to the memory of his beloved father. A.M. thanks NWO MPE `Theoretical estimates of heat losses in geothermal wells" (grant nr. 657.014.004) for funding.

\bibliographystyle{plain}
\bibliography{mybib}

\end{document}